\newtheorem{thm}[equation]{Theorem}
\newtheorem{prop}[equation]{Proposition}
\newtheorem{lem}[equation]{Lemma}
\newtheorem{sublem}[equation]{Sublemma}
\newtheorem{mthm}{Theorem}
\newtheorem{mcor}[mthm]{Corollary}
\newtheorem{step}{Step}
\newtheorem{clm}{Claim}
\theoremstyle{definition}
\newtheorem{defi}[equation]{Definition}
\newtheorem{ex}[equation]{Example}
\theoremstyle{remark}
\newtheorem{rem}[equation]{Remark}
\newcommand{\arrowprod}[2]{\left\langle \overrightarrow{#1}, \overrightarrow{#2} \right\rangle_\kappa}
\newcommand{\arrowprodnone}[2]{\left\langle \overrightarrow{#1}, \overrightarrow{#2} \right\rangle}
\newcommand{\ip}[2]{\left\langle {#1}, {#2} \right\rangle}
\newcommand{\norm}[1]{\left| #1 \right|}
\newcommand{\bra}[1]{\left\{ #1 \right\}}
\newcommand{\cosq}[2]{{\rm cosq} (\overrightarrow{#1}, \overrightarrow{#2})}
\newcommand{\cangle}{\tilde{\angle}_\kappa}
\newcommand{\zengle}{\tilde{\angle}_0}
\newcommand{\onegle}{\tilde{\angle}_1}
\newcommand{\arrow}[2]{{\uparrow^{#1}_{#2}}}
\newcommand{\barrow}[2]{\bra{\uparrow^{#1}_{#2}}}
\renewcommand{\H}{\mathcal{H}}
\newcommand{\R}{\mathbb{R}}
\renewcommand{\S}{\mathbb{S}}
\renewcommand{\[}{\left[}
\renewcommand{\]}{\right]}
\renewcommand{\(}{\left(}
\renewcommand{\)}{\right)}
\newcommand{\supp}{{\rm Supp}\,}
\newcommand{\conv}{\overline{{\rm conv}}\,}
\newcommand{\pack}{{\rm pack}_q}
\renewcommand{\k}{{\kappa}}
\renewcommand{\phi}{\varphi}
\renewcommand{\epsilon}{\varepsilon}
\newcommand{\tx}{\tilde{x}}
\newcommand{\ty}{\tilde{y}}
\newcommand{\tz}{\tilde{z}}
\newcommand{\tw}{\tilde{w}}
\newcommand{\tp}{{\tilde{p}}}
\newcommand{\txi}{\tilde{\xi}}
\newcommand{\teta}{\tilde{\eta}}
\newcommand{\cx}{{\check{x}}}
\newcommand{\cxi}{{\check{\xi}}}
\newcommand{\clambda}{\check{\lambda}}
\newcommand{\mua}{{\mu^\alpha}}
\newcommand{\xia}{{\xi^\alpha}}
\newcommand{\cxia}{{\check{\xi}^\alpha}}
\newcommand{\etaa}{{\eta^\alpha}}
\title[A rigidity theorem in Alexandrov spaces]{A rigidity theorem in Alexandrov spaces with lower curvature bound}
\author{Takumi Yokota}
\address{Graduate School of Pure and Applied Sciences\\
University of Tsukuba\\
305-8571 Tsukuba\\
Japan}
\email{takumiy@math.tsukuba.ac.jp}
\keywords{Alexandrov space, Triangle comparison, Quadruple comparison, infinite dimension}
\subjclass[2000]{53C23 (primary), 53C24, 54E50 (secondary)}
\thanks{The author was partially supported by Research Fellowships of the Japan Society for the Promotion of Science for Young Scientists (19$\cdot$9377).}
\begin{document}
\maketitle

\begin{abstract}
Distance functions of metric spaces with lower curvature bound,
by definition, enjoy various metric inequalities; triangle comparison, quadruple
comparison and the inequality of Lang--Schroeder--Sturm. The purpose of this
paper is to study the extremal cases of these inequalities and to prove rigidity
results. The spaces which we shall deal with here are Alexandrov spaces which
possibly have infinite dimension and are not supposed to be locally compact.
\end{abstract}

\section{Introduction}
The main object of the present paper is Alexandrov spaces with curvature bounded
below by a real number $\k \in \R$.
The definition is given in Definition~\ref{def:Alex} below. These
metric spaces are known to have the following property:
\begin{quotation}
All triangles in the space are thicker than the ones in the model surface $M^2_\k$.
\end{quotation}
Now let us make this statement more precise.

Throughout this paper, $M_\k$ stands for the infinite dimensional model space,
that is, $M_\k$ is the Elliptic, Euclidean, or Hyperbolic cone (e.g.~\cite{BBI}), depending
on the sign of $\k\in\R$, over the unit sphere of some infinite dimensional separable
Hilbert space. The model surface, i.e., the simply connected complete surface of
constant curvature $\k$, is denoted by $M^2_\k$.

We will use $[A,B]$ to denote the subset of a metric space $(X, d)$ defined by
$$[A,B] := \{x \in X \bigm| d(a, b) = d(a, x) + d(x, b) \text{ for some } a \in A, b \in B \}$$
for two subsets $A, B \subset X$. We also use $[a, B] := [\{a\}, B]$, $[a, b] := [\{a\}, \{b\}]$ and
$(a, b) := [a, b] \setminus \{a, b\}$ for any $a, b \in X$.

We return to the above situation. Let $(X, d)$ be an Alexandrov space with curvature
$\ge\k$ and $\Delta xyz := \{x, y, z\}$ be any three point set in $X$. When $\k > 0$ is positive,
we impose that the perimeter ${\rm peri}(x, y, z) := d(x, y) + d(y, z) + d(z, x)$ of $\Delta xyz$
is less than $2\pi/\sqrt\k$. Then we can find an isometric embedding of $\Delta xyz$ into
the model surface $M^2_\k$ and let  $\tilde\Delta xyz := \{\tx, \ty, \tz\}$ be the image of $\Delta xyz$ in $M^2_\k$.
We always use the tilde to indicate the points of the model space $M_\k$ (or $M^2_\k$)
corresponding to the ones of $X$.

Suppose that we have a point $w$ of $X$ with $w \in (y, z)$ and let $\tw \in (\ty, \tz)$ be the
corresponding point of $M^2_\k$. Then it follows from the very definition of the lower
curvature bound and Alexandrov's lemma (e.g.~\cite{BBI}) that
\begin{equation}\label{ineq:moti}
d(x, w) \ge d(\tx, \tw).
\end{equation}
The motivation of this work comes from the concern about the case where the
equality holds in~\eqref{ineq:moti}, and we shall establish a general rigidity theorem, which is applicable to various
situations including the equality case of~\eqref{ineq:moti}.

There have been intense studies of the geometry of Alexandrov spaces by many
authors. This is partly because they appear naturally as Gromov--Hausdorff limits
of sequences of Riemannian manifolds with uniform lower sectional curvature
bound. For instance, the theory of Alexandrov spaces, as well as Hamilton's
Ricci flow, was extensively utilized in Perelman's proof of the Poincar`e conjecture
(e.g.~\cite{MT}).

However, in this paper, we are concerned with the geometry of general,
namely, not necessarily of finite dimension nor locally compact, 
Alexandrov spaces itself and are going to prove a rigidity theorem in this setting.
We also generalize a result which is known in the finite dimensional case to 
the one which is also available in the infinite dimensional case;
see Corollaries~\ref{cor:D} and \ref{cor:E}.
As a precursor to this work, Mitsuishi~\cite{Mi} found out that
the splitting theorem for non-negatively curved Alexandrov spaces is also valid
in our setting.

The organization of this paper is as follows: In the next two sections, we recall
the definitions and properties of Alexandrov spaces. Then the main theorems of
this paper are formulated in Section~\ref{sect:4}. After establishing preliminary results in
Section~\ref{sect:5}, we describe the proofs of our main theorems in Sections~\ref{sect:6}--\ref{sect:8}. The final
section is devoted to a few applications of our results.

We refer the reader to \cite{BGP} and \cite{BBI, Pl:Surv, Sh} for basics of the theory of
Alexandrov spaces with curvature bounded below.

\section{Definitions}\label{sect:2}
We first define a notion that its curvature is bounded below for arbitrary metric
spaces. In order to do this, we need to set the comparison angle $\cangle(x; y, z) \in [0, \pi]$
for $\k \in \R$ by the law of cosines;
\begin{align*}
\cos \cangle(x; y, z) :=
\begin{cases}
\dfrac{d(x, y)^2 + d(x, z)^2 - d(y, z)^2}{2d(x, y)d(x, z)} &\text{ if } \k = 0;\\[12pt]
\dfrac{C_\k(d(y, z)) - C_\k(d(x, y))C_\k(d(x, z))}{\k S_\k(d(x, y))S_\k(d(x, z))} &\text{ if } \k \ne 0
\end{cases}
\end{align*}
for three points $x, y, z$ of a metric space $(X, d)$, with ${\rm peri}(x, y, z) < 2\pi/\sqrt\k$ if
$\k > 0$. Here
$$S_\k(r) :=
\begin{cases}
\sin(\sqrt\k r) / \sqrt\k &\text{ if } \k > 0;\\
\sinh(\sqrt{-\k}r) / \sqrt{-\k} &\text{ if } \k < 0
\end{cases}$$
and $C_\k(r) := S^\prime_\k(r)$.

\begin{defi}\label{def:curv}
Let $\k \in \R$ be a real number. We say that a metric space $(X, d)$
has curvature bounded below by $\k$, (or shortly $\ge \k$), if the following holds:
\begin{equation}\label{ineq:quad}
\cangle(x; y, z) + \cangle(x; z,w) + \cangle(x;w, y) \le 2\pi
\end{equation}
for any quadruple $(x; y, z, w)$ consisting of distinct four points of $X$, with size,
i.e., the maximum of perimeters of all triangles, is less than $2\pi/\sqrt\k$ if $\k > 0$. The
condition~\eqref{ineq:quad} is sometimes called the \textit{quadruple condition}.
\end{defi}

\begin{defi}\label{def:Alex}
Let $(X, d)$ be a metric space. We say that $(X, d)$ is an \textit{Alexandrov
space with curvature bounded below by $\k$} if it satisfies the following axioms:
\begin{enumerate}
\item
$(X, d)$ is a complete length space (which is not necessarily a geodesic
space), and
\item
$(X, d)$ has curvature bounded below by $\k$, in the sense of Definition~\ref{def:curv}.
\end{enumerate}

A metric space $(X, d)$ is called an \textit{inner space} if for any points $x, y \in X$,
$t \in (0, 1)$ and $\epsilon > 0$, we can find a point $z \in X$ such that
$$\frac{d(x, z)^2}{t} + \frac{d(z, y)^2}{1 - t} \le d(x, y)^2 + \epsilon.$$

A complete metric space is an inner space if and only if it is a length space. A
\textit{length space} is a metric space in which the distance between any two points is
equal to the infimum of the length of curves connecting these points. A \textit{geodesic
space} is a length space where the infimum is always attained for any two points.
We simply say that $X$ is an Alexandrov space when we do not need to refer to
the lower curvature bound $\k$.
\end{defi}

Some authors assume that the Alexandrov spaces they are dealing with are
finite dimensional or locally compact or geodesic spaces. In this paper, we adopt
the definition that seems to be most general. Here we recall the following implications
(\cite{BGP}): For an Alexandrov space $X$ with lower curvature bound,
\begin{align*}
X \text{ is of finite dimension}
&\implies X \text{ is locally compact}\\
&\implies X \text{ is a geodesic space}.
\end{align*}

It is also known that for any Alexandrov spaces, the Hausdorff dimension and
the covering (or topological) dimension agree (\cite{BGP, PP}).

We pause here to give examples.
The so-called metric transforms offer a large number of examples of metric spaces with lower curvature bound
(cf.~\cite{LV, VW}).
\begin{ex}
Given an arbitrary metric space $(X, d)$ and $\k\in\R$,
we equip $X$ with another metric $d_\k$ defined by $d_\k(x, y) := \phi_\k(d(x, y))$ for $x, y \in X$.
Here, $\phi_\k$ is the function given for fixed $\alpha \in [0, 1/2]$ by
\begin{align*}
\phi_\k(t) :=
\begin{cases}
\cos^{-1}(1- \frac{1}{2}\min\{\sqrt\k\, t^{2\alpha}, 1\}) /\sqrt\k &\text{ if } \k>0;\\ 
t^{\alpha} &\text{ if } \k=0;\\
\cosh^{-1}(1+ \frac{1}{2} \sqrt{-\k}\, t^{2\alpha}) /\sqrt{-\k} &\text{ if } \k<0.
\end{cases}
\end{align*}
Then the resulting metric space $(X, d_\k)$ has curvature $\ge\k$ for any $\alpha \in [0, 1/2]$.
\end{ex} 

The next results give two examples of Alexandrov spaces of non-negative curvature.
\begin{ex}
Let $(X, d)$ be a (locally compact) length space. Then
\begin{enumerate}
\item
the $L^2$-map space $(L^2(M, X; f_0), d_{L^2})$ (Mitsuishi~\cite{Mi}) and
\item
the $L^2$-Wasserstein space $(P_2(X), W_2)$ (e.g. Villani~\cite{Vi})
are Alexandrov spaces of non-negative curvature if and only if so is $(X, d)$.
\end{enumerate}

We refer the reader to the references quoted above for the definitions and the
precise statements. The corresponding result of Part (1) is well-known for CAT$(0)$
spaces.
\end{ex}

We use the term \textit{geodesic} to mean a constant speed (usually unit speed) curve
$\gamma : [0, l] \to X$ whose length is equal to the distance between its end points. We
shall implicitly identify a map $\gamma$ and its image $\gamma([0, l])$.

For any two geodesics $\gamma$ and $\eta : [0, l] \to X$ emanating from the same point
$p = \gamma(0) = \eta(0)$ of a metric space $(X, d)$ with curvature $\ge\k$, we define the \textit{angle}
between $\gamma$ and $\eta$ by
\begin{equation}
\angle(\gamma, \eta) := \lim_{s, t \to 0+} \cangle(p; \gamma(s), \eta(t)).
\end{equation}
The angel is always well-defined and not less than the comparison angle in a
metric space with curvature $\ge\k$, where the monotonicity of the comparison
angle holds: For any $x, y, z, z^\prime \in X$,
$$z^\prime \in (x, z) \implies  \cangle(x; y, z^\prime) \ge \cangle(x; y, z).$$
We need to invoke Alexandrov's lemma (e.g.~\cite{BBI}) to prove this.

Next, we define a space of directions and a tangent cone.
\begin{defi}
Let $X$ be an Alexandrov space with curvature bounded below and
$p$ be a point of $X$. We put $\Sigma^\prime_p$ as the set of all unit speed geodesics emanating
from $p$ with two geodesics identified if the angle between then is $0$. Then the
metric completion $(\Sigma_p, \angle)$ of $(\Sigma^\prime_p, \angle)$ is called the \textit{space of directions} at $p$.

The \textit{tangent cone} $(C_p, |\cdot|)$ at $p$ is, by definition, the Euclidean cone over the
space of directions $(\Sigma_p, \angle)$. We define, on $C_p$, the inner product $\ip{\cdot}{\cdot}$ by
$$\ip{\alpha \cdot \xi}{\beta \cdot \eta} := \alpha\beta \cos\angle(\xi, \eta),$$
and the metric $|\cdot|$ by
$$\norm{\alpha \cdot \xi - \beta \cdot \eta}^2 := \alpha^2 + \beta^2 - 2 \ip{\alpha \cdot \xi}{\beta \cdot \eta},$$
respectively, for any $\alpha \cdot \xi = (\xi, \alpha)$ and $\beta \cdot \eta = (\eta, \beta)$
in $C_p := \Sigma_{p} \times [0, \infty)/ \Sigma_{p} \times \{0\}$.

Following Petrunin~\cite{Pe}, we use $\arrow{q}{p} \in \Sigma_p$ and $\log_{p}q \in C_p$, respectively,
to denote the equivalence class of a
unit speed geodesic from $p$ to $q \in X$ and $d(p, q)\cdot \arrow{q}{p}$.
\end{defi}

Recall that we are dealing with Alexandrov spaces that are merely length
spaces, and there is no way to consider a space of directions in spaces with no
geodesics. The following proposition tells us that our Alexandrov spaces posses
many geodesics (cf. Otsu--Shioya~\cite[Lemma 2.2]{OS}).

\begin{prop}[Plaut~\cite{Pl, Pl:Surv}]
For any point $p$ of an Alexandrov space $X$ with
curvature $\ge\k$, let
$$J_p := \bra{q \in X \bigm| \text{there exists a unique geodesic connecting } p \text{ and } q}.$$
Then $J_p$ contains the following dense $G_\delta$-set:
$$\bigcap_{k\in \mathbb{N}} \bigcup_{x\in X} \bra{y \in X \Bigm| \cangle(y; p, x) > \pi - k^{-1}}.$$
\end{prop}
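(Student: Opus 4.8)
The plan is to prove the two assertions separately: first that $J_p$ contains the indicated $G_\delta$-set, and then that this $G_\delta$-set is dense. For the inclusion, I would fix $k \in \mathbb{N}$ and a point $y$ with $\cangle(y; p, x) > \pi - k^{-1}$ for some $x \in X$, and show that as $k \to \infty$ (so that for every $m$ there is some $x_m$ with $\cangle(y; p, x_m) > \pi - m^{-1}$) the point $y$ admits a \emph{unique} geodesic to $p$. The idea is standard: a large comparison angle $\cangle(y; p, x)$ close to $\pi$ forces, via the quadruple condition, the \emph{angle} $\angle$ at $y$ between any two geodesics $\arrow{p}{y}$ to be small. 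More precisely, suppose $\gamma, \eta$ are two geodesics from $y$ to $p$. Since $(X,d)$ is a length space, choose a point $z$ near $x$ on an almost-geodesic from $y$ towards $x$ so that $\cangle(y; p, z)$ is still close to $\pi$; then apply the quadruple condition \eqref{ineq:quad} to the quadruple $(y; \gamma(s), \eta(t), z)$ and use the monotonicity of comparison angles to bound $\cangle(y; \gamma(s), \eta(t))$ by something that tends to $0$. Letting $s, t \to 0+$ gives $\angle(\gamma, \eta) = 0$, and since both are unit-speed geodesics from $y$ starting in the same direction of the same length $d(y,p)$, a standard argument (the first variation / the fact that in curvature $\ge \k$ two geodesics with the same endpoints and zero angle coincide) shows $\gamma = \eta$. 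Hence $y \in J_p$.

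For density, the key input is axiom (1) of Definition~\ref{def:Alex}: $X$ is a complete length space, hence an inner space. I would argue that for every $q \in X$ and every $\epsilon > 0$ and every $m \in \mathbb{N}$, the ball $B(q, \epsilon)$ meets $U_m := \bigcup_{x \in X} \{ y \mid \cangle(y; p, x) > \pi - m^{-1}\}$. To see this, take an almost-midpoint-type construction: since $X$ is a length space, there is a curve from $q$ to $p$ of length close to $d(q,p)$; pick a point $y$ on this curve very close to $q$ (within $\epsilon$), and let $x := p$ — wait, that is degenerate, so instead let $y$ be close to $q$ on an almost-geodesic from $q$ \emph{through} $q$ extended slightly in the direction \emph{away} from $p$, i.e. realize $q$ as an approximate midpoint of $p$ and some farther point $x$. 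Concretely: using the inner-space property with $t$ close to $0$, one produces a point $x$ with $d(p,x) \approx d(p,q) + d(q,x)$ and $d(q,x)$ as large as we like; then for $y$ on an almost-geodesic from $q$ to $x$ sufficiently close to $q$, the comparison angle $\cangle(y; p, x)$ is forced to be close to $\pi$ because $d(p,y) + d(y,x) \approx d(p,x)$. Choosing $d(q,x)$ large relative to $d(p,q)$ makes $\cangle(y; p, x) > \pi - m^{-1}$. This puts $y \in U_m \cap B(q,\epsilon)$, proving each $U_m$ is dense open (openness of $U_m$ being immediate since $\cangle$ is continuous in its arguments where defined, and the strict inequality is an open condition). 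Then $\bigcap_m U_m$ is a dense $G_\delta$ by Baire's theorem, as $X$ is complete.

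The main obstacle I anticipate is the uniqueness argument in the first part, specifically the passage from "$\angle(\gamma,\eta) = 0$" to "$\gamma = \eta$" without assuming local compactness or existence of a full space of directions with nice structure. In a general (not locally compact) Alexandrov space one cannot invoke compactness of $\Sigma_p$; instead one must run a direct comparison argument: if $\gamma, \eta : [0,\ell] \to X$ are unit-speed geodesics from $y$ to $p$ with zero angle at $y$, apply the comparison inequality \eqref{ineq:moti} (or directly the quadruple/triangle comparison) to the triangle $\Delta\, y\, \gamma(s)\, p$ with the point $\eta(s)$, showing $d(\gamma(s), \eta(s))$ is bounded by the corresponding model quantity which vanishes as the angle vanishes, uniformly in $s$; a rescaling/iteration then forces $\gamma(s) = \eta(s)$ for all $s$. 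The other technical point requiring care is making all the "$\approx$" estimates in the density argument quantitative enough that the strict inequality $> \pi - m^{-1}$ genuinely holds — but this is a routine continuity-of-$\cangle$ computation using the explicit law of cosines, and I would relegate it to a short lemma rather than belabor it. The rest — openness of $U_m$, invoking Baire — is formal.
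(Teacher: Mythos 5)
Your proposal only proves uniqueness, but the heart of the proposition in this paper's setting is \emph{existence}. Here $X$ is merely a complete length space, not assumed geodesic or locally compact, and $J_p$ is the set of points admitting a (unique) geodesic to $p$; the paper invokes the proposition precisely to guarantee that geodesics from $p$ exist in abundance. Your argument starts from ``suppose $\gamma,\eta$ are two geodesics from $y$ to $p$'' and never produces even one. Plaut's proof gets existence by taking almost-geodesics (chains of approximate midpoints, available by the inner/length property) from $y$ to $p$ and using the hypothesis $\cangle(y;p,x)>\pi-k^{-1}$, via the quadruple condition and monotonicity, to show that points at corresponding parameters on any two such almost-geodesics are uniformly close (quantitatively in $k$ and the length defect); completeness then yields a limit geodesic, and the same estimate gives uniqueness. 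Without this step the statement you prove is strictly weaker than the proposition. Incidentally, the uniqueness part you do treat can be done more cheaply than you fear: for $y$ in the intersection over all $k$, monotonicity gives $\cangle(y;x,\gamma(s))>\pi-k^{-1}$ and $\cangle(y;x,\eta(s))>\pi-k^{-1}$, so the quadruple condition forces $\cangle(y;\gamma(s),\eta(s))\le 2k^{-1}$ for every $k$, hence $d(\gamma(s),\eta(s))=0$ directly; no first-variation or rescaling argument is needed.

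The density argument also has a flaw as written: you try to realize $q$ as an approximate midpoint of $p$ and a ``farther'' point $x$ with $d(p,x)\approx d(p,q)+d(q,x)$ and $d(q,x)$ as large as you like. Neither the length-space nor the inner-space property produces such an $x$; this amounts to extending a geodesic beyond $q$, which can fail (e.g.\ at boundary points of a convex body), and ``$d(q,x)$ large'' is impossible when $\k>0$ since the space is bounded. The correct and simpler move is the one you discarded too quickly: take a curve from $p$ to $q$ of length $<d(p,q)+\delta$, let $y$ be the point on it at distance $\epsilon/2$ from $q$, and take the witness $x:=q$ (not $x:=p$); then $d(p,y)+d(y,q)\le d(p,q)+\delta$ forces $\cangle(y;p,q)>\pi-m^{-1}$ for $\delta$ small, so $y\in B(q,\epsilon)$ lies in $U_m$. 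With that repair, your openness-plus-Baire conclusion for the $G_\delta$ set is fine; the essential missing ingredient remains the construction of the geodesic itself.
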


\section{Properties of Alexandrov spaces}\label{sect:3}
In this section, we collect basic properties of Alexandrov spaces with lower
curvature bound. We will use $I$ to denote an index set consisting of $N := \# I$
elements.

Let us start with the following elementary observation. For any Hilbert space
$(\H, \ip{\cdot}{\cdot})$ and finite sequence $\bra{x_i}_{i \in I} \subset \H$ of points,
the symmetric matrix $(\ip{x_i}{x_j})_{ij}$
is positive semi-definite. Indeed, we have
$$\sum_{i, j\in I} \lambda_i\lambda_j \ip{x_i}{x_j}
= \left\| \sum_{i \in I} \lambda_ix_i \right\|^2
\ge 0$$
for any sequence $\bra{\lambda_i}_{i \in I} \subset \R$ of real numbers. This observation leads us to the
following definition and theorem.

For three points $x, y, z$ of a metric space $(X, d)$ and $\k \in \R$, we put
\begin{equation}
\arrowprod{xy}{xz} := d(x, y)d(x, z) \cos \cangle(x; y, z).
\end{equation}
We define $\arrowprod{xy}{xz} := 0$ if $d(x, y) \cdot d(x, z) = 0$.
When $\k > 0$, $d(x, y) \cdot d(x, z) > 0$
and $\cangle(x; y, z)$ is not well-defined, we let $\arrowprod{xy}{xz} := +\infty$, by definition. We
refer to $\arrowprod{xy}{xz}$ as the \textit{inner product} of $(X, d)$.

\begin{thm}[Sturm~\cite{St}]\label{thm:St}
Let $(X, d)$ be a length space.
Then the following are equivalent.
\begin{enumerate}
\item{\normalfont (Triangle comparison)}
For any three points $x, y, z \in X$, with ${\rm peri}(x, y, z) <
2\pi/\sqrt\k$ if $\k > 0$, a sequence
$\bra{w^k} \in (y, z)$ and the corresponding points
$\tx, \ty, \tz, \tw$ of $M^2_\k$,
\begin{equation}\label{ineq:tri}
d(x, \bra{w^k}) \ge d (\tx, \tw).
\end{equation}

\item{\normalfont (Quadruple comparison)}
For any quadruple $(x; y, z, w) \subset X$, with size
$< 2\pi/\sqrt\k$ if $\k > 0$,
$$\cangle(x; y, z) + (x; z, w) + \cangle(x; w, y) \le 2\pi.$$

\item{\normalfont (Lang--Schroeder--Sturm inequality)}
For any $p \in X$ and finite sequences
$\bra{x_i}_{i \in I} \subset X$ of points and $\bra{\lambda_i}_{i \in I} \subset \R_+$ of positive real numbers,
\begin{equation}\label{ineq:LSS}
\sum_{i, j\in I} \lambda_i\lambda_j \arrowprod{px_i}{px_j} \ge 0.
\end{equation}
\end{enumerate}
\end{thm}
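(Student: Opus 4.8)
The plan is to establish the cyclic implications $(1) \Rightarrow (2) \Rightarrow (3) \Rightarrow (1)$, treating the three inner products $\arrowprodnone{px_i}{px_j}$, the comparison angles, and the geodesic comparison all as encodings of the same Euclidean/spherical/hyperbolic model data. Throughout, the key technical tool is Alexandrov's lemma, which governs how comparison triangles glued along a common side behave, together with the monotonicity of the comparison angle recalled in Section~\ref{sect:2}.

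For $(1) \Rightarrow (2)$, I would take a quadruple $(x; y, z, w)$ and a point $m$ lying on a near-midpoint of a geodesic from $y$ to $z$ (using the inner-space property of $X$, since $X$ is a length space, to produce such $m$ with the defect in the geodesic inequality as small as we like). Applying the triangle comparison to the triples $\Delta x y z$ with the interior point $m$ gives $d(x,m) \ge d(\tx, \tilde m)$ where $\tilde m$ is the corresponding point in $M^2_\k$. Then one assembles the three comparison triangles $\tilde\Delta x y m$, $\tilde\Delta x m z$, and $\tilde\Delta x z w$, $\tilde\Delta x w y$ around the vertex $\tx$; the inequality $d(x,m) \ge d(\tx,\tilde m)$ forces the angles at $\tx$ to open up in a way that, via Alexandrov's lemma, yields $\cangle(x; y, z) \le \cangle(x; y, m) + \cangle(x; m, z)$, and combining with the analogous bookkeeping for the full quadruple produces $\cangle(x;y,z) + \cangle(x;z,w) + \cangle(x;w,y) \le 2\pi$ after letting the defect tend to $0$.

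For $(2) \Rightarrow (3)$, this is the Lang--Schroeder--Sturm argument: fix $p$ and the finite data $\bra{x_i}_{i\in I}$, $\bra{\lambda_i}_{i\in I}$. The quadruple condition says precisely that around $p$ the comparison angles $\cangle(p; x_i, x_j)$ satisfy a global sub-embeddability constraint; one shows that this implies the Gram-type matrix $\left( \arrowprodnone{px_i}{px_j} \right)_{ij}$ is a limit of positive semi-definite matrices, hence itself positive semi-definite. Concretely, I would realize the directions $\arrow{x_i}{p}$ (or suitable approximations, since geodesics need not exist — here the Plaut proposition supplies enough of them, or one works with approximate comparison configurations) inside the tangent cone $C_p$, using that the angles $\angle(\arrow{x_i}{p}, \arrow{x_j}{p})$ are controlled from below by $\cangle(p; x_i, x_j)$; then the elementary Hilbert-space observation from the start of Section~\ref{sect:3} applied to the vectors $\log_p x_i \in C_p$ — after checking $C_p$ embeds isometrically in a Hilbert space, which is where the quadruple condition on $\Sigma_p$ enters — gives $\sum_{i,j} \lambda_i \lambda_j \ip{\log_p x_i}{\log_p x_j} \ge 0$, and $\ip{\log_p x_i}{\log_p x_j} \le \arrowprodnone{px_i}{px_j}$ with $\lambda_i > 0$ closes the gap.

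For $(3) \Rightarrow (1)$, given $x, y, z$ and $w^k \in (y,z)$, I would apply \eqref{ineq:LSS} with $p = x$ and the three (or four) points $y, z, w$ together with carefully chosen positive weights $\lambda_i$ — the standard trick is to pick weights proportional to the barycentric coordinates of $\tw$ relative to $\ty, \tz$ in $M^2_\k$ — so that the quadratic form $\sum \lambda_i\lambda_j \arrowprodnone{xx_i}{xx_j} \ge 0$ unwinds, using the law of cosines defining $\cangle$, into exactly the inequality $d(x,w) \ge d(\tx, \tw)$; then pass to the sequence $\bra{w^k}$. The main obstacle I expect is $(2)\Rightarrow(3)$ in the infinite-dimensional, non-locally-compact setting: one must verify that the tangent cone $C_p$ — equivalently that $\Sigma_p$ with the angle metric — is genuinely of curvature $\ge 1$ and embeds in a Hilbert space, so that the naive Gram-matrix positivity is available; handling the possible non-existence of geodesics (so that $\log_p x_i$ may not literally exist) forces one to argue with $\epsilon$-approximate directions and a limiting argument, and keeping the weights $\lambda_i$ and the perimeter/size constraints under control there is the delicate part. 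The $\k \neq 0$ cases add only bookkeeping with $S_\k, C_\k$ and the spherical/hyperbolic laws of cosines, which I would treat by the usual reduction to the model surface $M^2_\k$.
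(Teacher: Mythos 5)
Your outline reproduces the general shape of the result but two of its three implications rest on steps that do not hold as stated; note also that the paper itself does not reprove the full equivalence (it cites Sturm and \cite{BGP} for $(1)\Leftrightarrow(2)$ and for deducing $(1)$,$(2)$ from $(3)$, and only sketches $(1)\Rightarrow(3)$), so the fair comparison is with that sketch. The serious gap is in your $(2)\Rightarrow(3)$: you reduce, as the paper does, to positivity of $\sum_{i,j}\lambda_i\lambda_j\ip{\log_p x_i}{\log_p x_j}$ via angle monotonicity, but you then justify this positivity by claiming that $C_p$ embeds isometrically into a Hilbert space, ``which is where the quadruple condition on $\Sigma_p$ enters.'' That claim is false in general: the tangent cone is typically a non-flat cone (for instance the tangent cone at the apex of the flat cone over a circle of length $<2\pi$), and such a cone admits no isometric embedding into Hilbert space, since an embedding would force the kernel $\ip{\cdot}{\cdot}$ to be of positive type against \emph{arbitrary} real coefficients, which fails. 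What is true, and what is actually needed, is positivity against \emph{nonnegative} coefficients only --- but that is precisely Proposition~\ref{prop:LS}, i.e.\ the statement you are trying to prove at the tangent-cone level. It requires a genuine argument: in the paper it is the iterated subadditivity Sublemma~\ref{sublem:subad}, whose proof uses midpoint constructions and the triangle comparison in $X$ (which is why the paper derives $(3)$ from $(1)$ rather than from $(2)$); Lemma~\ref{lem:embed} shows how much extra structure (an antipode for every direction involved) is needed before a Hilbert embedding becomes available. Knowing that $\Sigma_p$ has curvature $\ge 1$ (Proposition~\ref{prop:Sigma}) does not supply this.

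There is also a concrete misstep in your $(3)\Rightarrow(1)$: you apply \eqref{ineq:LSS} with apex $p=x$. With positive weights on $y,z,w$ and apex $x$ the inequality can never be saturated by the model configuration: in $M^2_\k$ the vector $\log_{\tx}\tw$ is a \emph{positive} combination of $\log_{\tx}\ty$ and $\log_{\tx}\tz$, so a vanishing combination $\lambda_y\log_{\tx}\ty+\lambda_z\log_{\tx}\tz+\lambda_w\log_{\tx}\tw$ would require $\lambda_w<0$; hence no admissible choice of weights at $x$ unwinds into $d(x,w)\ge d(\tx,\tw)$. The standard argument (Sturm's) places the apex at the point on the side: apply \eqref{ineq:LSS} at $p=\bra{w^k}$ to the three points $x,y,z$ with weights $\epsilon$, $d(\bra{w^k},z)$, $d(\bra{w^k},y)$ (with the obvious $S_\k$-modifications when $\k\ne 0$); the $y$--$z$ block of the form tends to $0$, and after dividing by $\epsilon$ and letting $\epsilon\to 0+$ the cross terms with $x$ give exactly \eqref{ineq:tri} via the law of cosines. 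Finally, your $(1)\Rightarrow(2)$ sketch glosses over the actual content of the \cite{BGP} equivalence (the stated splitting inequality for comparison angles at $x$ is not the mechanism, and in a length space one must in addition work with approximate midpoints and pass to limits), but since the paper simply cites this implication, the two items above are the substantive gaps to repair.
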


In handling a length space $(X, d)$, we find it convenient to pretend that a
sequence, say $\bra{x^k}\subset X$, of points is a single point; this is essentially equivalent
to work in the ultra limit $\lim_\omega X_i$ of constant sequence $X_i := X$ (e.g.~\cite{Mi}). The
distance is, for example, defined by
$$d(\bra{x^k}, \bra{y^l}) := \lim_{k, l\to\infty} d(x^k, y^l)$$
whenever the limit in the RHS exists. We also let them show up in inequalities.
Inequality~\eqref{ineq:tri} means that
$$\liminf_{k\to\infty} d(x, w^k) \ge d (\tx, \tw)$$
and we mean by $\bra{w^k} \in (y, z)$ that
$$\frac{d(y, \bra{w^k})}{t} = d(y, z) =  \frac{d(\bra{w^k}, z)}{1 - t} \text{ for some } t \in (0, 1).$$

To be precise, Sturm~\cite{St} proved Theorem~\ref{thm:St} for geodesic spaces. The equivalence
of (1) and (2) was established in \cite{BGP} for geodesic spaces. However, it is
easy to see that the original arguments work as well for length spaces. Here, we
briefly recall the proof of the derivation of (3) from (1) in Theorem~\ref{thm:St}.

\begin{proof}[Proof of (1) $\Rightarrow$ (3)]
If $\k > 0$ is positive and $d(p, x_{i_0}) = \pi/\sqrt\k$ for some $i_0 \in I$, inequality
\eqref{ineq:LSS} holds obviously. We may assume that $\bra{x_i \,|\, i \in I}$ is contained in the open
ball $B(p, \pi/\sqrt\k)$. Take a sequence $\bra{x^k_i} \subset J_p$ such that $x^k_i\to x_i$ as $k \to \infty$ for
each $i \in I$. Then
\begin{align*}
\sum_{i, j\in I} \lambda_i\lambda_j \arrowprod{px_i}{px_j}
&= \sum_{i, j\in I} \lambda_i\lambda_j \arrowprod{p\bra{x^k_i}}{p\bra{x^k_j}}\\
&\ge \sum_{i, j\in I} \lambda_i\lambda_j \ip{\bra{\log_{p}{x^k_i}}}{\bra{\log_{p}{x^k_j}}}.
\end{align*}
The inequality is due to the angle monotonicity. Then the desired inequality
follows from the following result of Lang--Schroeder.
\end{proof}

\begin{prop}[Lang--Schroeder~\cite{LS}]\label{prop:LS}
Let $(X, d)$ be an Alexandrov space with
curvature bounded below. Take any finite sequences $\bra{\xi_i}_{i \in I} \subset C_p$ from the tangent
cone at $p \in X$ and $\bra{\lambda_i}_{i \in I} \subset \R_+$. Then
\begin{equation}\label{ineq:LS}
\sum_{i, j\in I} \lambda_i\lambda_j \ip{\xi_i}{\xi_j} \ge 0.
\end{equation}
\end{prop}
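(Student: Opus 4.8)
The plan is to reduce \eqref{ineq:LS} to a Lang--Schroeder--Sturm inequality \emph{for the Euclidean cone} $C_p$ — equivalently, to the case $\k=0$ of \eqref{ineq:LSS} with base point the apex of $C_p$ — and to establish the latter by induction on $N:=\#I$. The geometric facts I would use are that $\angle(\cdot,\cdot)$ is a metric on $\Sigma'_p$ and that $\Sigma_p$ satisfies the quadruple condition with $\k=1$ (``curvature $\ge 1$''), so that $C_p$ satisfies it with $\k=0$; both follow from the quadruple condition on $X$ by the standard limiting argument of \cite{BGP}, which, as noted for \cite{St}, goes through verbatim for length spaces.

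First I would reduce the configuration. Both sides of \eqref{ineq:LS} depend continuously on $\xi_1,\dots,\xi_N$ in the cone metric, and $\Sigma'_p$ is dense in $\Sigma_p$ by definition, so it suffices to treat $\xi_i=a_i\cdot\arrow{q_i}{p}$ with $a_i\ge 0$ and an honest unit speed geodesic $\gamma_i$ from $p$ to $q_i$. Absorbing $\mu_i:=\lambda_i a_i\ge 0$ and writing $u_i:=\arrow{q_i}{p}\in\Sigma'_p$, the inequality becomes $\sum_{i,j\in I}\mu_i\mu_j\cos\angle(u_i,u_j)\ge 0$. The numbers $\angle_{ij}:=\angle(u_i,u_j)$ lie in $[0,\pi]$, satisfy the triangle inequality, and for any three indices obey $\angle_{ij}+\angle_{jk}+\angle_{ki}\le 2\pi$, this last being the quadruple condition for $(p;\gamma_i(s),\gamma_j(s),\gamma_k(s))$ in the limit $s\to 0+$ (we may assume the $u_i$ pairwise distinct, combining coincident ones without changing the left side). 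Equivalently, writing $\eta_i:=\mu_i\cdot u_i\in C_p$ and $o$ for the apex, the claim is $\sum_{i,j\in I}\ip{\eta_i}{\eta_j}\ge 0$, which is \eqref{ineq:LSS} at the point $o$ of $C_p$ for the parameter $\k=0$; via $2\ip{\eta_i}{\eta_j}=\norm{\eta_i}^2+\norm{\eta_j}^2-\norm{\eta_i-\eta_j}^2$ it is also the ``variance inequality'' $N\sum_i\norm{\eta_i}^2\ge\frac{1}{2}\sum_{i,j}\norm{\eta_i-\eta_j}^2$ at $o$.

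For $N\le 3$ this is elementary: numbers $\angle_{ij}\in[0,\pi]$ with the triangle inequality and $\sum_{i<j}\angle_{ij}\le 2\pi$ are exactly the pairwise spherical distances of a triple of points of $\S^2$, so $(\cos\angle_{ij})$ is the Gram matrix of the corresponding unit vectors of $\R^3$, hence positive semi-definite, and $\sum_{i,j}\mu_i\mu_j\cos\angle_{ij}\ge 0$ follows for all real $\mu_i$. For $N\ge 4$ such a spherical realization generally fails, and one argues by induction on $N$, using that $\Sigma_p$ has curvature $\ge 1$ (i.e.\ the quadruple condition for directions): one lowers the number of directions by replacing a subfamily of the $u_i$ by a single direction of $\Sigma_p$ — produced using completeness of $\Sigma_p$ — and controls the resulting defect by the comparison inequalities for the collapsed, lower-dimensional configuration, then invokes the inductive hypothesis. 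This is the argument of Lang--Schroeder \cite{LS}.

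The main obstacle is precisely this inductive step for $N\ge 4$: it is essentially the delicate direction ``quadruple condition $\Rightarrow$ Lang--Schroeder--Sturm inequality'' of Theorem~\ref{thm:St} in the case $\k=0$, and in the generality of this paper there is the added difficulty that $\Sigma_p$ — hence $C_p$ — is neither locally compact nor known to be a geodesic space, so one cannot extract subsequential limits or form circumcenters and barycenters of finite subsets by compactness, and the reductions must be carried out with explicit quantitative control. (To avoid any hypothesis on $C_p$, one may instead perform these reductions at infinitesimal scale inside $X$, which \emph{is} a length space: after passing from $q_i$ to $\gamma_i(s)$ and using that on configurations of diameter $O(s)$ the $\k$-comparison and the $0$-comparison agree up to $O(\k s^4)$, the approximate geodesics supplied by $X$ let the finitely many reduction steps proceed with accumulated error $o(s^2)$, which is negligible after the rescaling by $s^{-2}$ implicit in passing to the tangent cone.)
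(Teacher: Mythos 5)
Your reduction to directions in $\Sigma^\prime_p$, the $N\le 3$ spherical-realization remark, and the diagnosis that the work must be done inside $X$ rather than inside $C_p$ (since $\Sigma_p$ need not be geodesic, cf.\ Halbeisen's example, so Theorem~\ref{thm:St} cannot simply be applied to the cone at its apex) are all fine, but they are the easy part. The genuine gap is that the inductive step for $N\ge 4$ --- which is the whole content of the proposition --- is never proved: you delegate it to ``the argument of Lang--Schroeder'' and to a parenthetical sketch about working at scale $s$ in $X$ with accumulated error $o(s^2)$, without ever stating what a single reduction step is: which direction replaces the collapsed pair, with which weight, and why the inner products against \emph{all} remaining test directions decrease only by a controlled amount. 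That is a simultaneous one-sided estimate with a specific replacement weight, and nothing in your text produces it; ``controls the resulting defect by the comparison inequalities for the collapsed, lower-dimensional configuration'' is not an argument.

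The paper closes exactly this gap with Sublemma~\ref{sublem:subad} (sub-additivity of the inner product): for $\xi_1,\xi_2\in\Sigma^\prime_p$ with weights $\lambda_1,\lambda_2$ and any finite test set $Z\subset\Sigma^\prime_p$, one takes the geodesics $\gamma_i$ with $\dot\gamma_i(0)=\lambda_i\cdot\xi_i$, chooses $x^k\in J_p$ which is an approximate midpoint of $\gamma_1(2\epsilon^k)$ and $\gamma_2(2\epsilon^k)$ for the rescaled metric $d^k=(\epsilon^k)^{-1}d$, and shows that $\lambda_{12}:=\lim_k d^k(p,x^k)=\sqrt{\lambda_1^2+\lambda_2^2+2\lambda_1\lambda_2\ip{\xi_1}{\xi_2}}$ and $\xi_{12}:=\arrow{x^k}{p}$ (for $k$ large) satisfy $\lambda_1\ip{\xi_1}{\zeta}+\lambda_2\ip{\xi_2}{\zeta}\ge\lambda_{12}\ip{\xi_{12}}{\zeta}-\epsilon$ for every $\zeta\in Z$, the key inequality coming from the triangle comparison applied to $x^k$ and $\gamma_\zeta(\epsilon^k)$. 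Collapsing two directions at a time (not an arbitrary ``subfamily'') then reduces, after finitely many steps each costing $\epsilon$, to the two-vector case $\lambda_0^2+\clambda_0^2+2\lambda_0\clambda_0\ip{\xi_0}{\cxi_0}\ge 0$, and letting $\epsilon\to 0$ finishes the proof. Your closing sketch points in this direction, but absent a precise statement and proof of such a sub-additivity lemma the induction does not close, so the proposal as written is incomplete at its central step.
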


Although a neat proof of this result is available in the appendix of the original
paper~\cite{LS}, here we present a sketched proof of Proposition~\ref{prop:LS} for later use.

\begin{proof}
Notice that if $N = 2$, inequality~\eqref{ineq:LS} is a consequence of $a^2 + b^2 \ge 2ab$.

Due to the density of $\Sigma^\prime_p$ in $\Sigma_p$ and the homogeneity of the inner product $\ip{\cdot}{\cdot}$,
we may assume that $\bra{\xi_i \,|\, i\in I} \subset \Sigma^\prime_p \subset C_p$.

Fix $\xi_j := \xi_{i_j}$ and $\lambda_j := \lambda_{i_j}$, $j = 0, 1, 2$ for some $i_0 \in I$ and $i_1, i_2 \in I \setminus \{i_0\}$.
At first, we observe that the inner product is sub-additive.

\begin{sublem}\label{sublem:subad}
For any finite set $Z$ of $\Sigma^\prime_p$ and a positive $\epsilon > 0$, we can find a
direction $\xi_{12} \in \Sigma^\prime_p$ and $\lambda_{12} \ge 0$ such that
\begin{equation}\label{ineq:subad}
\lambda_1 \ip{\xi_1}{\zeta} + \lambda_2 \ip{\xi_2}{\zeta}
\ge \lambda_{12}\ip{\xi_{12}}{\zeta} - \epsilon \text{ for any } \zeta \in Z.
\end{equation}
\end{sublem}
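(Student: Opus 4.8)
The plan is to realize the "vector sum" of $\lambda_1\xi_1$ and $\lambda_2\xi_2$ by a genuine geodesic direction in $\Sigma'_p$, and to do this one should pass to honest geodesics in $X$ rather than staying in the formal tangent cone. First I would choose unit-speed geodesics $\gamma_1,\gamma_2:[0,\ell]\to X$ emanating from $p$ with $\arrow{\gamma_i(s)}{p}=\xi_i$ (possible since $\xi_i\in\Sigma'_p$). For a small parameter $s>0$ set $m_s$ to be a near-midpoint-type point realizing the convexity combination $\tfrac{d(p,m_s)^2}{1-t}+\cdots$, or more simply, using that $X$ is a length space, pick $m_s\in X$ with $d(m_s,\gamma_i(s))$ small compared to $s$ and $d(p,m_s)\approx$ the Euclidean length $|\lambda_1 s\,\xi_1+\lambda_2 s\,\xi_2|/(\lambda_1+\lambda_2)$ appropriate to the configuration; the precise construction is the one used to prove that the space of directions is a length space. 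Then let $\xi_{12}:=\arrow{m_s}{p}$ (or a geodesic direction approximating it) and $\lambda_{12}:=(\lambda_1+\lambda_2)\cdot\bigl(\text{scaling factor from the cone geometry}\bigr)$.

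The key computation is then a first-order expansion of the comparison angles as $s\to 0$. For each fixed $\zeta\in Z$, choose a geodesic $\sigma:[0,\ell]\to X$ from $p$ with $\arrow{\sigma(u)}{p}=\zeta$. The angle definition gives $\ip{\xi_i}{\zeta}=\lim_{s,u\to 0+}\tfrac{d(p,\gamma_i(s))^2+d(p,\sigma(u))^2-d(\gamma_i(s),\sigma(u))^2}{2\,d(p,\sigma(u))}\cdot\tfrac{1}{s}$ up to the usual $\kappa$-corrections (which are lower order), and similarly for $\xi_{12}$. Applying triangle comparison (Theorem~\ref{thm:St}(1)) to the triangles $p\gamma_1(s)\sigma(u)$, $p\gamma_2(s)\sigma(u)$ against their model counterparts in $M^2_\kappa$, and using that in the model plane the inner product is genuinely bilinear, one gets
\[
\lambda_1 s\ip{\xi_1}{\zeta}+\lambda_2 s\ip{\xi_2}{\zeta}\ \ge\ \ip{\log_p m_s}{\zeta}-o(s),
\]
because $\log_p m_s$ lies (approximately) on the model geodesic between $\lambda_1 s\xi_1$ and $\lambda_2 s\xi_2$ and $d(x,w)\ge d(\tx,\tw)$ pushes the actual inner product above the model one in the right direction. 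Dividing by $s$ and taking $s\to 0$ along a subsequence for which $\arrow{m_s}{p}$ converges in $\Sigma_p$ — then approximating that limit by an element $\xi_{12}\in\Sigma'_p$ using density, with the error absorbed into $\epsilon$ (only finitely many $\zeta\in Z$ to control) — yields \eqref{ineq:subad}.

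The main obstacle is the construction of $m_s$ and the verification that $\arrow{m_s}{p}$ genuinely behaves like the cone-sum of $\lambda_1\xi_1$ and $\lambda_2\xi_2$ to first order: in a merely length (not geodesic) space one does not have exact midpoints, and the geodesics $\gamma_i$ themselves only exist because $\xi_i\in\Sigma'_p$, so $m_s$ must be produced by the inner-space property with an $\epsilon$-slack that has to be shown to be lower order in $s$. Equivalently, one is proving that the tangent cone, or at least its dense subcone $\Sigma'_p$, is closed under the relevant "addition up to small angle," and this is exactly the delicate point where the lower curvature bound — through the monotonicity of comparison angles and triangle comparison — enters: one uses \eqref{ineq:tri} to bound $d(\gamma_i(s),m_s)$ from above by the model distance, which forces the angle $\angle(\xi_i,\xi_{12})$ to be small and the $\lambda_{12}$ to come out correctly. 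Once the first-order geometry of $m_s$ is pinned down, the inequality \eqref{ineq:subad} for each $\zeta$ is a routine limit, and the finiteness of $Z$ lets one choose a single $\xi_{12}$ working simultaneously for all of them.
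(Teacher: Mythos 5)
Your blueprint is the same as the paper's: realize the cone sum of $\lambda_1\xi_1$ and $\lambda_2\xi_2$ as the direction from $p$ to an approximate (weighted) midpoint of points taken on the two geodesics at a small scale, use triangle comparison together with bilinearity in the model to get the sub-additivity, and use the finiteness of $Z$ to absorb errors. However, two steps as written are genuine gaps. First, you pass to ``a subsequence for which $\arrow{m_s}{p}$ converges in $\Sigma_p$''. Nothing provides such a subsequence: $X$ is not assumed locally compact and $\Sigma_p$ need not be compact -- this non-compact setting is exactly the regime of the paper. The limit is also unnecessary: as in the paper's proof one fixes a single sufficiently small scale, takes $\xi_{12}:=\arrow{x^k}{p}$ for that one $k$, and the error for the finitely many $\zeta\in Z$ is at most $\epsilon$; the inequality needed at that fixed scale, namely $(\epsilon^k)^{-2}\arrowprod{px^k}{p\gamma_\zeta(\epsilon^k)}\ge\lambda_{12}\ip{\arrow{x^k}{p}}{\zeta}$ up to $o(1)$, is pure angle monotonicity and requires no convergence of directions.

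Second, writing $\xi_{12}:=\arrow{m_s}{p}$ presupposes that a geodesic from $p$ to $m_s$ exists, and your parenthetical ``or a geodesic direction approximating it'' is empty if no such geodesic exists to approximate. The paper's fix is to choose the approximate midpoints inside $J_p$ from the start; this is legitimate because $J_p$ contains a dense $G_\delta$ set (Plaut's proposition in Section~\ref{sect:2}) and the quantities involved depend continuously on the chosen point -- and then $\arrow{x^k}{p}$ is automatically in $\Sigma^\prime_p$, so no further density argument is needed. Relatedly, the item you yourself call ``the main obstacle'' -- that $d(p,m_s)$, after rescaling, converges to the cone norm $\lambda_{12}=\sqrt{\lambda_1^2+\lambda_2^2+2\lambda_1\lambda_2\ip{\xi_1}{\xi_2}}$, so that $\lambda_{12}$ ``comes out correctly'' -- is precisely the quantitative input your sketch asserts but never derives; in the paper it is obtained by a comparison computation (cf.\ Sublemma~\ref{sublem}), and without it the passage from the midpoint estimate to \eqref{ineq:subad} does not close. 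Finally, your appeal to ``the construction used to prove that the space of directions is a length space'' is shaky here: in the infinite-dimensional case $\Sigma_p$ may fail to be a length space (Halbeisen's example, cited in the paper); what survives, and what the paper actually uses, is exactly the near-midpoint construction with points chosen in $J_p$, carried out at one fixed small scale.
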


\begin{proof}
Let $\gamma_i : [0, l_i] \to X$ be the constant speed geodesic with $\dot{\gamma}(0) = \lambda_i \cdot\xi_i \in C_p$
for $i = 1, 2$ and $\gamma_\zeta : [0, l_\zeta ] \to X$ be the geodesic with $\dot{\gamma_\zeta}(0) = \zeta$ for each $\zeta \in Z$.
Let $\bra{\epsilon^k}$ be a sequence with $\epsilon^k \to 0+$ as $k \to \infty$. Take $x^k = x^k_{12} \in J_p$ such that
$$\lim_{k\to\infty} d^k(\gamma_1(2\epsilon^k), x^k)
= \lim_{k\to\infty} d^k(\gamma_2(2\epsilon^k), x^k)
= \lim_{k\to\infty} \frac{1}{2} d^k(\gamma_1(2\epsilon^k), \gamma_2(2\epsilon^k)).$$
Here $d^k(p, q) := (\epsilon^k)^{-1}d(p, q)$ is the enlarged distance.

Then it follows that
$$\lim_{k\to\infty}d^k(p, x^k) = \lambda_{12} :=
\sqrt{\lambda_1^2 + \lambda_2^2 + 2\lambda_1\lambda_2 \ip{\xi_1}{\xi_2}}$$
(cf. Sublemma~\ref{sublem} below) and, by the triangle comparison,
$$\lambda_1 \ip{\xi_1}{\zeta} + \lambda_2 \ip{\xi_2}{\zeta} 
\ge \limsup_{k\to\infty}\ (\epsilon^k)^{-2} \arrowprod{px^k}{p\gamma_\zeta(\epsilon^k)}
\ge \lambda_{12} \ip{\barrow{x^k}{p}}{\zeta}.$$
Then letting $\xi_{12} := \arrow{x^k}{p}$ for some large $k \gg 1$ establishes inequality~\eqref{ineq:subad}.
\end{proof}

Letting $J := I \cup \{i_{12}\} \setminus \{i_1, i_2\}$ and $\xi_{i_{12}} := \xi_{12}$
and $\lambda_{i_{12}} := \lambda_{12}$ be the ones
obtained in the sublemma, we have
$$\sum_{i, j\in I} \lambda_i\lambda_j \ip{\xi_i}{\xi_j}
\ge \sum_{i, j\in J} \lambda_i\lambda_j \ip{\xi_i}{\xi_j} - \epsilon.$$
By repeating this process, we obtain a direction $\cxi_0 \in \Sigma^\prime_p$ and $\clambda_0 \ge 0$ such that
$$\sum_{i, j\in I} \lambda_i\lambda_j \ip{\xi_i}{\xi_j}
\ge \lambda_0^2 + \clambda_0^2 + 2\lambda_0\clambda_0 \ip{\xi_0}{\cxi_0} - \epsilon
\ge - \epsilon.$$
Since $\epsilon > 0$ is arbitrary, this concludes the proof of the proposition.
\end{proof}

\begin{rem}
Notice that if $\# I = 2$, inequality~\eqref{ineq:LSS} is just a Cauchy--Schwartz
inequality for the inner product $\arrowprod{xy}{xz}$. Hence what is essential in the implication
(3) $\Rightarrow$ (2) in Theorem~\ref{thm:St} is the case $\# I \ge 3$. To be honest, Part (2) follows
from (3) for any $\bra{x_i}_{i\in I}$ with $\# I = 3$ in general metric spaces.
\end{rem}

\begin{rem}
As was noted in \cite{St}, inequality~\eqref{ineq:LSS} is equivalent to
\begin{align}\label{ineq:St}
\sum_{i, j\in I} \lambda_i\lambda_jd(x_i, x_j)^2
&\le 2\sum_{i \in I} \lambda_id(p, x_i)^2
&\text{ if } \k = 0;\\
\sum_{i, j\in I}\lambda_i\lambda_jC_\k(d(x_i, x_j))
&\le \[\sum_{i \in I}\lambda_iC_\k(d(p, x_i))\]^2
&\text{ if } \k < 0.\nonumber
\end{align}
These inequalities were utilized by Sturm~\cite{St2} in studying an approximation of
energy functional of maps whose target spaces have curvature bounded below.
Inequality~\eqref{ineq:St} was also used by Ohta--Pichot~\cite{OP} who gave a new characterization
of the non-negativity of the curvature in terms of the Markov type of metric
spaces.
\end{rem}

\begin{rem}
In \cite{BN}, Berg--Nikolaev gave the following definitions: for four points
$A, B, C, D$ of a metric space $(X, d)$,
$$\cosq{AB}{CD} :=
\frac{d(A,D)^2 + d(B,C)^2 - d(A,C)^2 - d(B,D)^2}{2d(A,B)d(C,D)}$$
and
\begin{equation}\label{eq:BNprod}
\arrowprodnone{AB}{CD} := d(A,B)d(C,D)\cosq{AB}{CD}.
\end{equation}

One of their main results in \cite{BN} is:
\begin{thm}[Berg--Nikolaev \cite{BN}, cf. Sato \cite{Sa}]\label{thm:BG}
Let $(X, d)$ be a geodesic space.
Then the following are equivalent.
\begin{enumerate}
\item
For any distinct four points $A, B, C, D \in X$,
$\cosq{AB}{CD} \le 1$.
\item
$(X, d)$ is a ${\rm CAT}(0)$ space.
\end{enumerate}
\end{thm}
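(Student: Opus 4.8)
The plan is to prove Theorem~\ref{thm:BG} by relating the Berg--Nikolaev quadrilateral cosine to the familiar CAT$(0)$ comparison inequalities, exploiting the fact that on any side of a comparison triangle the bracket $\arrowprodnone{AB}{CD}$ behaves exactly as it would for genuine vectors in a Hilbert space. The implication (2) $\Rightarrow$ (1) is the easy direction: in a CAT$(0)$ space every quadruple $A,B,C,D$ embeds $1$-Lipschitz-comparably into the Euclidean plane (more precisely, the four pairwise distances dominate the corresponding Euclidean model quantities in the right directions), and for four points $\tilde A,\tilde B,\tilde C,\tilde D\in\R^2$ one computes directly that
\begin{equation*}
d(\tilde A,\tilde B)\,d(\tilde C,\tilde D)\,\cosq{\tilde A\tilde B}{\tilde C\tilde D}
= \ip{\tilde B-\tilde A}{\tilde D-\tilde C}
\le d(\tilde A,\tilde B)\,d(\tilde C,\tilde D)
\end{equation*}
by Cauchy--Schwarz, so $\cosq{AB}{CD}\le 1$ after checking that passing to the space itself only weakens the numerator in the favourable direction. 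The content is in (1) $\Rightarrow$ (2).

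For (1) $\Rightarrow$ (2), first I would reduce to showing that $(X,d)$ satisfies the $\mathrm{CAT}(0)$ four-point (or quadruple) condition, i.e.\ the Reshetnyak/LSS-type inequality; since $X$ is assumed geodesic, this suffices by the standard characterization of CAT$(0)$ spaces. The key step is to take a geodesic $\gamma$ from $y$ to $z$, a point $w=\gamma(t)$ on it, and a fourth point $x$, and to apply hypothesis (1) to the degenerate quadrilateral where two of the four points coincide with, or lie along, the geodesic $[y,z]$; running the quadruple $(x,w;y,z)$ through the condition $\cosq{xw}{yz}\le 1$ and letting $w\to y$ and $w\to z$ produces a pair of one-sided angle estimates at $x$ that combine, via the first variation formula along $\gamma$, into the comparison inequality $d(x,w)\le d(\tilde x,\tilde w)$ for the Euclidean comparison triangle. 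The mechanism is the same calculus that underlies Sublemma~\ref{sublem:subad}: the bracket $\arrowprodnone{\cdot\,}{\cdot}$ is bilinear-like, so summing the contributions of $[y,w]$ and $[w,z]$ recovers the cosine-rule identity that characterizes the flat comparison configuration, and the hypothesis forces the space side to sit on the correct side.

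The main obstacle I anticipate is handling the \emph{directions} carefully: $\cosq{AB}{CD}\le 1$ is only an inequality on the ``projection'' $\arrowprodnone{AB}{CD}$, and a single application of it controls one inner product but not the full quadratic form, so one must iterate — choosing auxiliary points along geodesics, much as in the proof of Proposition~\ref{prop:LS} — and pass to limits to upgrade the scalar inequality to the genuine nonpositive-curvature comparison. A secondary technical point is that the definition of $\cosq{AB}{CD}$ requires the four points to be distinct and $d(A,B)d(C,D)>0$, so the degenerate limits $w\to y$, $w\to z$, and the collision of a moving point with a fixed one, must be taken through nondegenerate configurations and the resulting inequalities extended by continuity. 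Once the first-variation argument is set up this is routine, but getting the limiting angle inequalities to point in mutually consistent directions — so that their sum is exactly the flat cosine law rather than merely an inequality in the wrong sense — is the delicate part, and it is essentially where the geodesic hypothesis on $X$ is used.
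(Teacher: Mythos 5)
First, a point of reference: the paper does not prove Theorem~\ref{thm:BG} at all --- it is quoted from Berg--Nikolaev \cite{BN} (with Sato's shorter argument cited as \cite{Sa}) only to contrast the ${\rm CAT}(0)$ bracket with the lower-curvature-bound results --- so your proposal has to be measured against those external proofs. Your direction (2)~$\Rightarrow$~(1) is essentially sound, but the phrase ``dominate \dots in the right directions'' hides the one real subtlety: you must majorize (Reshetnyak) the closed quadrilateral taken in the cyclic order $A,B,D,C$, so that the four pairs $AB$, $BD$, $DC$, $CA$ keep their lengths while the diagonals $AD$, $BC$ do not decrease; then the numerator of $\cosq{AB}{CD}$ is at most its Euclidean value $2\ip{\tilde A-\tilde B}{\tilde C-\tilde D}$, and Cauchy--Schwarz gives the bound $2d(A,B)d(C,D)$ precisely because $AB$ and $CD$ are \emph{sides} of this quadrilateral. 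With either of the other two cyclic orders the comparison runs the wrong way, so this choice must be made explicitly.

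The genuine gap is in (1)~$\Rightarrow$~(2). The quadruple you propose to use, $\cosq{xw}{yz}\le 1$ with $w\in(y,z)$, reads $d(x,z)^2+d(w,y)^2-d(x,y)^2-d(w,z)^2\le 2\,d(x,w)\,d(y,z)$; since $d(x,w)$ occurs only on the right-hand side, this is a \emph{lower} bound on $d(x,w)$ --- an inequality of the same lower-curvature type as~\eqref{ineq:moti} --- and no passage to the limits $w\to y$, $w\to z$, nor an appeal to first variation (which is not freely available in a bare geodesic space anyway), can convert it into the ${\rm CAT}(0)$ comparison $d(x,w)\le d(\tx,\tw)$. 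To force an upper bound on $d(x,w)$ that distance must sit in the numerator, e.g.\ via $\cosq{xy}{zw}\le 1$; doing this for the midpoint $m$ of $[y,z]$ and averaging the two symmetric choices yields only $d(x,m)^2\le \tfrac12 d(x,y)^2+\tfrac12 d(x,z)^2+\tfrac12 d(y,z)\bigl(d(x,y)+d(x,z)\bigr)-\tfrac34 d(y,z)^2$, strictly weaker than the CN inequality $d(x,m)^2\le \tfrac12 d(x,y)^2+\tfrac12 d(x,z)^2-\tfrac14 d(y,z)^2$ that characterizes ${\rm CAT}(0)$ for geodesic spaces. Closing exactly this gap is the content of Berg--Nikolaev's quasilinearization argument and of Sato's alternative proof, and it is missing from your sketch; moreover, the appeal to the calculus of Sublemma~\ref{sublem:subad} and Proposition~\ref{prop:LS} cannot supply it, since those are lower-curvature-bound tools whose inequalities point the opposite way --- indeed the paper quotes Theorem~\ref{thm:BG} precisely to record that the Berg--Nikolaev bracket does \emph{not} fit the lower-bound formalism of Theorem~\ref{thm:St}.
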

Hence it is not possible to generalize Theorem~\ref{thm:St} to the inner product of Berg--
Nikolaev given in~\eqref{eq:BNprod} for a general Alexandrov space of non-negative curvature
unless it is flat.
\end{rem}

For a metric space $(X, d)$, we let $P_1(X)$ be the set of all Borel probability
measures on $X$ with finite moment and separable support. We say that a Borel
measure $\mu$ has finite moment when
$\int_X d(p, x)\,d\mu(x)$ is finite for some (and all)
$p \in X$. The support $\supp\mu$ of a Borel measure $\mu$ on $X$ is, by definition,
$$\supp\mu := \bra{ x \in X \bigm| \mu(B(x, \delta)) > 0 \text{ for all } \delta > 0}.$$

It is straightforward to generalize inequality~\eqref{ineq:LSS} to the following form.

\begin{prop}[Sturm \cite{St}]\label{prop:St}
Let $(X, d)$ be an Alexandrov spaces with curvature
$\ge \k$. Then for any probability measure $\mu \in P_1(X)$ and a point $p \in X$, we have
$$\int_X \int_X \arrowprod{px}{py} d\mu(x)d\mu(y) \ge 0.$$
\end{prop}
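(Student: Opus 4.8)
The plan is to reduce the integral inequality in Proposition~\ref{prop:St} to the finite version~\eqref{ineq:LSS} of the Lang--Schroeder--Sturm inequality, already established in Theorem~\ref{thm:St}, by a standard measure-theoretic approximation argument. First I would fix $p \in X$ and $\mu \in P_1(X)$ and set $F(x, y) := \arrowprod{px}{py}$. The immediate technical point is to check that $F$ is $\mu \otimes \mu$-integrable: since $|\arrowprod{px}{py}| \le d(p, x)\, d(p, y)$ by the definition of the inner product (and $\cos$ being bounded), integrability follows from the hypothesis that $\mu$ has finite moment, together with $\int d(p, x)^2\, d\mu$ being controllable — here one should be slightly careful, as finite \emph{first} moment need not give finite second moment, so I would instead exhaust $X$ by the balls $B_R := B(p, R)$, work with the restricted (renormalized) measures $\mu_R := \mu|_{B_R} / \mu(B_R)$, on which all moments are finite, and pass to the limit $R \to \infty$ at the end by monotone/dominated convergence once nonnegativity is known for each $\mu_R$.

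The core step is to approximate $\mu_R$ weakly by finitely supported measures and to transfer the inequality. Since $\supp \mu$ is separable, for each $n$ I would partition $B_R$ into finitely many Borel pieces $\{A^n_i\}_{i \in I_n}$ of diameter $< 1/n$, pick a point $x^n_i \in A^n_i$ (nonempty pieces only), and set $\lambda^n_i := \mu_R(A^n_i)$ and $\nu_n := \sum_{i} \lambda^n_i \delta_{x^n_i}$. Then $\nu_n \to \mu_R$ weakly, and by~\eqref{ineq:LSS} applied to the points $\{x^n_i\}_{i \in I_n}$ and weights $\{\lambda^n_i\}$ we get
$$\int_X \int_X F(x, y)\, d\nu_n(x)\, d\nu_n(y) = \sum_{i, j \in I_n} \lambda^n_i \lambda^n_j \arrowprod{px^n_i}{px^n_j} \ge 0$$
for every $n$. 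It remains to show that the left-hand side converges to $\int \int F\, d\mu_R\, d\mu_R$. On the bounded set $B_R$, the functions $d(p, \cdot)$ are bounded, so the only obstacle to continuity of $F$, hence to passing to the limit, is the possible failure of continuity of $\cangle(p; \cdot, \cdot)$ near the diagonal and (when $\k > 0$) near distance $\pi/\sqrt\k$; but $F(x, y) = \arrowprod{px}{py}$ extends continuously through the diagonal with value $d(p, x)^2$, and one can argue that $F$ is continuous on $B_R \times B_R$ off a $\mu_R \otimes \mu_R$-null set (or is bounded and a.e.-continuous), which suffices for convergence of the integrals against the approximating measures.

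The step I expect to be the main obstacle is precisely this last continuity/convergence verification: controlling $F$ uniformly and identifying the exact set of discontinuities. One clean way around it, which I would adopt if the direct approach gets delicate, is to replace the bare comparison angle by the monotone-in-scale quantity and use the formulation~\eqref{ineq:St}: when $\k = 0$ the inequality $\sum \lambda_i \lambda_j d(x_i, x_j)^2 \le 2 \sum \lambda_i d(p, x_i)^2$ involves only the genuinely continuous function $(x, y) \mapsto d(x, y)^2$, so the finitely-supported approximation passes to the limit with no diagonal issue at all, giving $\int \int d(x, y)^2\, d\mu\, d\mu \le 2 \int d(p, x)^2\, d\mu$ directly (on each $B_R$, then let $R \to \infty$), and similarly for $\k < 0$ using $C_\k(d(\cdot, \cdot))$, which is bounded and continuous on bounded sets. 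Translating~\eqref{ineq:St} back to the integral form of~\eqref{ineq:LSS} then yields the claim; the case $\k > 0$ is handled the same way after noting $\arrowprod{px}{py}$ is bounded wherever it is finite on $B(p, \pi/\sqrt\k)$ and that one reduces to this ball exactly as in the proof of (1) $\Rightarrow$ (3) above.
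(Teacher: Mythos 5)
The paper itself offers no written proof of Proposition~\ref{prop:St}: it is stated as a ``straightforward'' generalization of~\eqref{ineq:LSS}, and the discretization argument you propose (finitely supported approximations of $\mu$, the finite inequality~\eqref{ineq:LSS}, then a limit) is exactly the intended route; it is also the same device the paper uses later, in Section~\ref{sect:8}, where tightness of measures in $P_1(X)$ and approximation by finitely supported measures are invoked. So your approach is correct in substance, with one technical slip and two unnecessary worries. The slip: you cannot ``partition $B_R$ into finitely many Borel pieces of diameter $<1/n$'' in this setting, because $X$ is not assumed locally compact, so a bounded ball need not be totally bounded. The correct fix is the one the paper itself recalls: $\mu\in P_1(X)$ is tight (or use separability of $\supp\mu$), so one takes a compact, hence totally bounded, set $K_\epsilon$ with $\mu(K_\epsilon)>1-\epsilon$, partitions $K_\epsilon$ into finitely many small pieces, and controls the leftover mass using the bound $\bigl|\arrowprod{px}{py}\bigr|\le d(p,x)\,d(p,y)$, which is bounded on $B_R\times B_R$ (and by $(\pi/\sqrt\k)^2$ when $\k>0$). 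The unnecessary worries: first, no second moment is needed for integrability, since $\bigl|\arrowprod{px}{py}\bigr|\le d(p,x)\,d(p,y)$ gives $\int\int|\cdot|\,d\mu\,d\mu\le\bigl(\int d(p,x)\,d\mu\bigr)^2<\infty$ already from the finite first moment (so the exhaustion by $B_R$ is harmless but not forced); second, the integrand has no diagonal discontinuity: writing out the law of cosines, $\arrowprod{px}{py}$ is a continuous function of the three distances $d(p,x)$, $d(p,y)$, $d(x,y)$ (with value $0$ as $x\to p$ and, for $\k>0$, staying continuous up to perimeter $2\pi/\sqrt\k$ on the open ball $B(p,\pi/\sqrt\k)$), so on the totally bounded piece $K_\epsilon$ it is uniformly continuous and the passage to the limit is immediate; your fallback via~\eqref{ineq:St} also works, provided the translation back to~\eqref{ineq:LSS} is done where all terms are finite, as you indicate. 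Finally, when $\k>0$ one should dispose of the case $\mu\bigl(\{x: d(p,x)=\pi/\sqrt\k\}\bigr)>0$ exactly as in the proof of (1) $\Rightarrow$ (3) of Theorem~\ref{thm:St}, since there the inner product is $+\infty$ and the inequality is trivial; with these adjustments your argument is complete.
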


\section{Statement of the main theorem}\label{sect:4}
In this section, we formulate the main theorems of the present paper.
\begin{mthm}\label{thm:A}
Let $(X, d)$ be an Alexandrov space with curvature $\ge\k$. Suppose
that we have a point $p \in X$ and a probability measure $\mu \in P_1(X)$ such that
\begin{equation}
\int_X\int_X \arrowprod{px}{py} d\mu(x)d\mu(y) = 0.
\end{equation}
Let $Y := \supp \mu \cap B(p, \pi/\sqrt\k)$.
Then $[p, Y]$ can be isometrically embedded into
the model space $M_\k$ and $\conv [p, Y]$ is isometric to that of the embedded image in
$M_\k$.
\end{mthm}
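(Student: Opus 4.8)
The plan is to reduce the equality case of the integral inequality in Proposition~\ref{prop:St} to the finite, sharp case of the Lang--Schroeder inequality (Proposition~\ref{prop:LS}), and then to analyze exactly when that sharp case forces the directions in the tangent cone to span a Hilbert space isometrically.

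First I would approximate. Since $\mu \in P_1(X)$ has separable support and finite moment, the function $(x,y)\mapsto\arrowprod{px}{py}$ is (essentially) bounded below and integrable, and one can choose finitely supported measures $\mu_n \to \mu$ in $P_1$ so that the double integrals converge; by the positivity in Proposition~\ref{prop:St} applied to $\mu_n$, the hypothesis forces the finite sums $\sum_{i,j}\lambda_i\lambda_j\arrowprod{px_i}{px_j}$ to be ``asymptotically zero'' for points $x_i$ drawn from a dense countable subset of $Y=\supp\mu\cap B(p,\pi/\sqrt\k)$. Pushing the approximation of Plaut's proposition (replace $x_i$ by nearby points of $J_p$) and the angle monotonicity used in the proof of (1)$\Rightarrow$(3) of Theorem~\ref{thm:St}, one gets that in fact $\sum_{i,j}\lambda_i\lambda_j\ip{\log_p x_i}{\log_p x_j}=0$ for every finite subfamily $\{x_i\}$ of a dense subset of $Y$ and all $\lambda_i>0$ — i.e.\ equality holds throughout in the chain of inequalities in the proof of (1)$\Rightarrow$(3). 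In particular the Gram matrix $(\ip{\log_p x_i}{\log_p x_j})$ is positive semidefinite with the extra property that no nonnegative combination can be made negative.

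Next comes the structural heart. Running the proof of Proposition~\ref{prop:LS} under the equality assumption: the sub-additivity step (Sublemma~\ref{sublem:subad}) must be tight, which I expect to force, for each pair, the ``composed'' direction $\xi_{12}$ to behave like the genuine vector sum $\lambda_1\xi_1+\lambda_2\xi_2$ in a Hilbert space — geometrically, that the two geodesics from $p$ toward $x_1,x_2$ lie in a totally geodesic flat (or constant-curvature-$\k$) sector, and the comparison triangle $\tilde\Delta p x_i x_j$ is realized isometrically inside $[p,Y]$. Iterating over finite subfamilies and passing to the closure, the map $x \mapsto \log_p x$ extends to an isometric embedding of $[p,Y]$ (with the $\k$-cone metric) into the tangent cone $C_p$, whose image lies in a Hilbert subspace of $C_p$; transplanting via the exponential-type correspondence between $C_p$ and $M_\k$ (the Euclidean/elliptic/hyperbolic cone over a Hilbert sphere), this yields the isometric embedding of $[p,Y]$ into $M_\k$. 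The statement about $\conv[p,Y]$ then follows because convex combinations in the model are controlled by the same inner-product data, and the equality case propagates to them by density and continuity of $\arrowprod{\cdot}{\cdot}$ together with the inner-space axiom producing the needed near-midpoints.

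The main obstacle I anticipate is the rigidity extraction in the sub-additivity step: showing that equality in~\eqref{ineq:subad} (for all relevant $\zeta$, after the limiting/approximation process) genuinely forces the pair of geodesics to span a flat/constant-curvature $2$-sector rather than merely satisfying the inner-product identity on a restricted test set. This requires a careful use of the first-variation formula for angles, the fact that the limit in Sublemma~\ref{sublem:subad} computes $|\lambda_1\xi_1+\lambda_2\xi_2|$ exactly, and a monotonicity argument ruling out ``hidden positive curvature'' along the geodesics — i.e.\ upgrading an equality of numbers to an equality of triangles. Once that local rigidity is in hand, the globalization by induction on the number of points and by metric completion should be comparatively routine, modulo bookkeeping with the $\k>0$ perimeter constraint (which is exactly why we intersect with $B(p,\pi/\sqrt\k)$).
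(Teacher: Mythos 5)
Your overall flavor (approximate $\mu$ by finitely supported measures, analyze the equality case of the Lang--Schroeder--Sturm inequality through the tangent cone, then pass to $\conv[p,Y]$) matches the paper, but two of your pivotal claims are respectively false and unsubstantiated, and they hide exactly the steps the paper has to work for. First, the equality $\int\int\arrowprod{px}{py}\,d\mu\,d\mu=0$ does \emph{not} force $\sum_{i,j}\lambda_i\lambda_j\ip{\log_p x_i}{\log_p x_j}=0$ for every finite subfamily of $\supp\mu$ and all $\lambda_i>0$: already a single point $x_1\neq p$ gives $\lambda_1^2 d(p,x_1)^2>0$. The correct consequence is linear, not quadratic: applying Proposition~\ref{prop:St} to $\frac{1}{1+\epsilon}(\mu+\epsilon\delta_z)$ gives $\phi(z):=\int_X\arrowprod{pz}{px}\,d\mu(x)\ge 0$ for all $z$, with $\phi\equiv 0$ on $\supp\mu$, together with the approximate vanishing of the barycentric vector $\int\log_p x\,d\mu$. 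Likewise, Proposition~\ref{prop:LS} only controls sums with \emph{nonnegative} coefficients, so the ``Gram matrix is positive semidefinite'' and the ensuing ``image lies in a Hilbert subspace of $C_p$'' do not follow; $C_p$ is not Hilbert in general. The mechanism that actually yields the model-space embedding is the existence of the \emph{antipodes} $-\arrow{x}{p}$ for $x\in Y$, which turns $\ip{\cdot}{\cdot}$ into a kernel of positive type on the relevant subset of $\Sigma_p$ (Lemma~\ref{lem:embed}), combined with the equality $\cangle(p;x,y)=\angle(\arrow{x}{p},\arrow{y}{p})$. Your sketch never produces antipodes, so the embedding step has no foundation.

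Second, since $X$ is only a complete length space (not geodesic, not locally compact), the objects you freely write down --- $\log_p x$ for $x\in\supp\mu$, and later midpoints of $x_1,x_2$ needed for $\conv[p,Y]$ --- must be \emph{constructed}. Density, continuity and the inner-space axiom only give approximate data; the paper converts them into genuine limits by Cauchy-sequence arguments in $(\Sigma_p,\angle)$, which rest on the nontrivial fact that $\Sigma_p$ has curvature $\ge 1$ (Proposition~\ref{prop:Sigma}(2)), on the approximate antipodal vector built via Sublemma~\ref{sublem:subad} from $\mu$ restricted to the complement of a small ball $B(x_0,\delta)$, and on the rigidity Lemma~\ref{lem:rigid} when extending to the convex hull. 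This is the technical heart of the proof of Theorem~\ref{thm:A} and is absent from your plan; the ``equality in the sub-additivity step forces a flat totally geodesic sector'' route you flag as the main obstacle is neither carried out in your proposal nor the way the paper argues, so as it stands the proposal has genuine gaps at the existence of directions and antipodes, at the Hilbert-space (model-space) embedding, and at the midpoint construction for $\conv[p,Y]$.
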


In the statement above, $\conv D$ for any subset $D \subset X$ is the subset of $X$
defined inductively as follows (cf. \cite{LS}): First, we define the subset $H_1(E)$
for any subset $E \subset X$ by
$x \in H_1(E)$ if ands only if $x \in E$ or $x \in pq$ for a \textit{unique} geodesic segment $pq$
connecting their end points $p, q$ in $E$.
Then we set
$$H_0 := D, \qquad H_n := H_1(H_{n-1}) \text{ for } n \ge 1,$$
and $\conv D$ is the closure of the convex hull
$\cup^\infty_{n=0} H_n$ of $D$.

The following is a restatement of Theorem~\ref{thm:A} for $\mu \in P_1(X)$ with finite support.
\begin{mthm}\label{thm:B}
Let $(X, d)$ be an Alexandrov space with curvature $\ge \k$. Suppose
that we have a point $p \in X$ and finite sequences $\bra{x_i}_{i\in I} \subset X$ of points and
$\bra{\lambda_i}_{i\in I} \subset \R_+$ such that
\begin{equation}
\sum_{i, j\in I} \lambda_i\lambda_j \arrowprod{px_i}{px_j} = 0.
\end{equation}
Then $[p, \bra{x_i \,|\, i\in I}]$ can be isometrically embedded into the model space $M_\k$ and
$\conv [p, \bra{x_i \,|\, i\in I}]$ is isometric to that of the embedded image in $M_\k$.
\end{mthm}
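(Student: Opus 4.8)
The plan is to reduce Theorem~\ref{thm:B} to the infinitesimal statement at $p$ and then to run a bootstrapping/induction argument on the structure of $\conv[p,\{x_i\}]$. First I would pass from the points $x_i\in X$ to the directions $\arrow{x_i}{p}\in\Sigma_p$, or rather to the vectors $\log_p x_i\in C_p$, reusing the chain of inequalities in the proof of (1)$\Rightarrow$(3) in Theorem~\ref{thm:St}: the equality $\sum_{i,j}\lambda_i\lambda_j\arrowprod{px_i}{px_j}=0$ together with Proposition~\ref{prop:LS} (with the $x_i$ approximated by points of $J_p$) forces $\sum_{i,j}\lambda_i\lambda_j\ip{\log_p x_i}{\log_p x_j}=0$, i.e. the vectors $v_i:=\log_p x_i$ satisfy the extremal case of the Lang--Schroeder inequality, and simultaneously it forces each comparison angle $\cangle(p;x_i,x_j)$ to equal the genuine angle $\angle(\arrow{x_i}{p},\arrow{x_j}{p})$, so that $|v_i|=d(p,x_i)$ and $|v_i-v_j|=d(x_i,x_j)$. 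Thus the finite metric space $\{p\}\cup\{x_i\}$ embeds isometrically into the tangent cone $C_p$, and I want to upgrade this to a \emph{Hilbertian} embedding: the point is to show the Gram-type matrix built from the $v_i$ is positive semi-definite of the correct rank so that $C_p$ restricted to the span of the $v_i$ is isometric to a Euclidean space, hence to a convex subset of $M^0_\k$ when $\k=0$, and in general, after applying the cone/exponential correspondence, to a convex subset of $M_\k$.

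The second stage is to go back from the infinitesimal picture in $C_p$ to the actual space $X$. Here I would argue that the extremal equality propagates: for any geodesic $\gamma$ from $p$ towards (a point near) $x_i$ and any $w\in(p,x_i)$ one has the analogous equality with $x_i$ replaced by $w$, because the inner products $\arrowprod{pw}{px_j}$ are controlled by $d(p,w)$ times the unit-direction inner products, and the angle monotonicity together with the established equality $\cangle(p;x_i,x_j)=\angle(\cdots)$ leaves no room for strict inequality along the geodesic. This should show that the map $[p,\{x_i\}]\to M_\k$ defined by sending a point $w\in[p,x_i]$ on a geodesic in direction $\arrow{x_i}{p}$ to the corresponding point on the image geodesic in $M_\k$ is well-defined and distance-preserving; the main content is that distances \emph{between} points lying on different geodesics $[p,x_i]$ and $[p,x_j]$ are also preserved, which again follows from the equality case of triangle comparison applied with vertex $p$.

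The third stage is the passage from $[p,Y]$ (here $[p,\{x_i\}]$) to its closed convex hull $\conv[p,\{x_i\}]$. I would proceed by induction on the ``generation'' $n$ in the definition $H_0\subset H_1\subset\cdots$: assuming $H_{n-1}$ is already isometric to a subset $\widetilde H_{n-1}\subset M_\k$, a new point $x\in H_n$ lies on a \emph{unique} geodesic between two points $a,b\in H_{n-1}$, and I must show (i) the segment $[a,b]$ in $X$ maps isometrically onto $[\tilde a,\tilde b]$ in $M_\k$, and (ii) distances from the new points to all old points are again the model distances. Point (i) uses that $M_\k$ is uniquely geodesic (using $d(a,b)<\pi/\sqrt\k$ when $\k>0$, which holds because everything sits inside $B(p,\pi/\sqrt\k)$ and $\conv$ of such a set stays in a ball of controlled radius), and point (ii) follows by applying the already-established rigidity with a \emph{different} basepoint --- one reruns the equality analysis with $p$ replaced by a suitable point, noting that the LSS-equality at $p$ forces the corresponding equality at every point of $[p,\{x_i\}]$ and hence of the convex hull, since passing to subsets and to convex combinations preserves the vanishing of the quadratic form. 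Finally one takes closures, using completeness of $X$ and of $M_\k$, to get the isometry on $\conv[p,\{x_i\}]$.

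The main obstacle I anticipate is stage two: controlling distances \emph{between different geodesics} $[p,x_i]$ and $[p,x_j]$, and more generally showing that the extremal equality, which a priori is a single scalar identity involving all the $x_i$ at once, can be \emph{localized} to each pair and each sub-segment so that one genuinely reconstructs a global isometry rather than just an isometric embedding of the finite vertex set. The cleanest route is probably to show first that for \emph{every} $p'\in[p,\{x_i\}]$ and the induced directions, the Lang--Schroeder quadratic form again vanishes --- i.e. the equality is ``hereditary'' under moving the basepoint into the configuration --- and then to invoke the $N=3$ case (equality in triangle comparison, which is classical: a triangle that is as thick as its comparison triangle spans a convex region isometric to the comparison triangle) pairwise and patch. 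Handling the case $\k>0$ requires care with the perimeter/size constraint $<2\pi/\sqrt\k$ throughout, but since $\conv$ of a subset of $B(p,\pi/\sqrt\k)$ remains within that ball (geodesics between points at distance $<\pi/\sqrt\k$ from $p$ cannot escape it by the comparison inequality), the constraint is preserved and the $+\infty$ convention for ill-defined comparison angles is never triggered in the extremal configuration.
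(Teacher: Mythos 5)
Your overall skeleton (infinitesimal analysis at $p$, embedding of $[p,\bra{x_i}]$, then extension to the convex hull) matches the paper's, but two of your key steps are asserted rather than proved, and one of them fails as stated. First, in stage one you need the Gram matrix $\(\ip{\log_p x_i}{\log_p x_j}\)_{ij}$ to be positive semi-definite for \emph{arbitrary real} coefficients in order to get a Hilbertian (hence $M_\k$-spherical) embedding of the directions; Lang--Schroeder (Proposition~\ref{prop:LS}) only gives this for positive coefficients, and positivity for arbitrary signs genuinely fails in tangent cones (e.g.\ four equally spaced unit directions in the cone over a circle of length $3\pi/2$ have a non-PSD Gram matrix). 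The mechanism the paper uses is the existence of the \emph{antipode} $-\arrow{x_i}{p}$ in $\Sigma_p$ for each $i$ (Lemma~\ref{lem:connect}), which via $\angle(\xi,\eta)=\pi-\angle(-\xi,\eta)$ converts arbitrary signs into positive coefficients and makes $\ip{\cdot}{\cdot}$ a kernel of positive type (Lemma~\ref{lem:embed}); the antipode exists only because of the extremal equality, and the same argument simultaneously produces the geodesics $px_i$, whose existence you also take for granted when you write $\log_p x_i$ (the space is merely a complete length space, not geodesic).

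Second, your stage three rests on the claim that the equality at $p$ ``forces the corresponding equality at every point of $[p,\bra{x_i}]$ \dots since passing to subsets and to convex combinations preserves the vanishing of the quadratic form.'' That is not true: already in $\R^2$, if $p$ is the weighted barycenter of $\bra{x_i}$, deleting one point destroys the vanishing, and you exhibit no weights at the new basepoint. More seriously, in a possibly non-geodesic, non-locally-compact space the main content of this step is the \emph{existence} of the points of $H_n$, i.e.\ of geodesics between points already constructed, which you simply assume. The paper never moves the basepoint: it keeps $p$, takes quasi-midpoint sequences $\bra{x^k}\in(x_{1-\delta},x_{2-\delta})$, proves they are Cauchy using curvature $\ge 1$ of $\Sigma_p$ (Proposition~\ref{prop:Sigma}) together with the antipode $-\arrow{x_2}{p}$, obtains a genuine point $x_{12}\in(x_1,x_2)$, and then \emph{redistributes} the weights over the enlarged index set $J=I\cup\{i_{12}\}$ (the $\lambda'_i$ bookkeeping, chosen so that $\sum_{i\in J}\lambda'_i\arrow{\tx_i}{\tp}=0$ in $C_{\tp}$) so that the vanishing identity persists for the enlarged configuration at the same basepoint $p$; this is what makes the induction over $H'_n$ run. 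Without the antipodes, the Cauchy argument, and this weight redistribution, both your embedding step and your convex-hull step have genuine gaps.
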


We can rewrite Theorem~\ref{thm:B} for $\bra{x_i}_{i\in I}$ with $\# I = 3$ as follows with no difficulty.

\begin{mthm}\label{thm:C}
Let $(X, d)$ be an Alexandrov space with curvature $\ge \k$. Suppose
that we have a quadruple $Y := (x; y, z, w)$ in $X$, with size less than $2\pi/\sqrt\k$ if
$\k > 0$, such that $x \notin [y, z] \cup [z,w] \cup [w, y]$ and
\begin{equation}
\cangle(x; y, z) + \cangle(x; z,w) + \cangle(x;w, y) = 2\pi.
\end{equation}
Then the triangle $\Delta yzw$ spans the unique closed triangular region $D$ which contains
$x$ and isometric to the triangular region in $M^2_\k$ with vertices $\ty, \tz$ and $\tw$.
\end{mthm}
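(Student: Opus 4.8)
The plan is to deduce Theorem~\ref{thm:C} from Theorem~\ref{thm:B}, applied with $p:=x$ and the three points $x_1:=y$, $x_2:=z$, $x_3:=w$; the only substantive point is to exhibit weights $\lambda_1,\lambda_2,\lambda_3>0$ for which the hypothesis of Theorem~\ref{thm:B} holds, after which the conclusion is a matter of translation. Put $a_i:=d(x,x_i)$ and $\theta_{ij}:=\cangle(x;x_i,x_j)$, which make sense since the size of $Y$ is $<2\pi/\sqrt\k$ when $\k>0$. Since $x\notin[y,z]\cup[z,w]\cup[w,y]$ we have $a_i>0$ and $\theta_{ij}<\pi$ for all $i\ne j$; moreover, were some $\theta_{ij}$ equal to $0$, the hypothesis $\theta_{12}+\theta_{13}+\theta_{23}=2\pi$ would force the other two of these angles to equal $\pi$, which is likewise excluded. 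Hence $\theta_{ij}\in(0,\pi)$ for all $i\ne j$.

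I would now argue by finite-dimensional linear algebra. Let $\tilde G:=(\cos\theta_{ij})_{i,j}$ be the symmetric $3\times3$ matrix with unit diagonal. By the classical identity $\det\tilde G=4\sin s\,\sin(s-\theta_{12})\sin(s-\theta_{13})\sin(s-\theta_{23})$ with $s:=\tfrac12(\theta_{12}+\theta_{13}+\theta_{23})=\pi$, we get $\det\tilde G=0$; and since none of the $\theta_{ij}$ lies in $\{0,\pi\}$, the matrix $\tilde G$ has rank $2$, so $\ker\tilde G=\R v$ for some $v\ne0$. To see that $v$ has all coordinates of one sign, realize $\tilde G$ as a Gram matrix: since $\theta_{12},\theta_{13},\theta_{23}\in(0,\pi)$ sum to $2\pi$, there are unit vectors $u_1,u_2,u_3\in\R^2$ with pairwise angles $\theta_{ij}$ that occur as the three consecutive gaps around the circle, so the origin lies in the interior of the convex hull of $\{u_1,u_2,u_3\}$; the relation $\sum_i v_iu_i=0$, unique up to scale, then has $v_i>0$ and spans $\ker\tilde G$. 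Setting $\lambda_i:=v_i/a_i>0$ and using $\arrowprod{xx_i}{xx_i}=a_i^2$ and $\arrowprod{xx_i}{xx_j}=a_ia_j\cos\theta_{ij}$ for $i\ne j$, we obtain $\sum_{i,j}\lambda_i\lambda_j\arrowprod{xx_i}{xx_j}=v^{\top}\tilde Gv=0$, which is precisely the hypothesis of Theorem~\ref{thm:B}. (Such a positive $v$ is exactly what the equality case of the usual derivation of the quadruple condition from inequality~\eqref{ineq:LSS} for three points yields.)

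Applying Theorem~\ref{thm:B} we get an isometric embedding of $[x,\{y,z,w\}]=[x,y]\cup[x,z]\cup[x,w]$ into $M_\k$ carrying $x,y,z,w$ to the comparison configuration $\tx,\ty,\tz,\tw$ (in particular the three segments are genuine geodesics of $X$), which extends to an isometry of $\conv[x,\{y,z,w\}]$ onto the closed convex hull of the image. It remains to identify this image. The triangle $\ty\tz\tw$ is nondegenerate: if, say, $\tw$ lay on $[\ty,\tz]$, the ray from $\tx$ to $\tw$ would separate those to $\ty$ and $\tz$, giving $\cangle(x;y,z)=\cangle(x;w,y)+\cangle(x;z,w)$, so the three comparison angles at $\tx$ would sum to $2\,\cangle(x;y,z)<2\pi$ (because $x\notin[y,z]$), a contradiction. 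Since the genuine angles of $\ty\tz\tw$ at $\tx$ sum to $2\pi$ while $\tx$ lies on none of its sides, $\tx$ is interior to $\ty\tz\tw$, and the directions $\log_{\tx}\ty,\log_{\tx}\tz,\log_{\tx}\tw$—whose pairwise angles again sum to $2\pi$—are coplanar, so $\tx,\ty,\tz,\tw$ span a totally geodesic copy of $M^2_\k$ inside $M_\k$; inside it, the closed convex hull of $[\tx,\{\ty,\tz,\tw\}]$ is exactly the closed triangular region bounded by $\ty\tz\tw$. Pulling back, $D:=\conv[x,\{y,z,w\}]$ is a closed triangular region spanned by $\Delta yzw$, containing $x$, and isometric to the triangular region in $M^2_\k$ with vertices $\ty,\tz,\tw$. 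For the uniqueness clause, any other closed triangular region $D'\subset X$ spanned by $\Delta yzw$, containing $x$, and isometric to the model region must place $x$ at the same position of the model (the point at the prescribed distances from $\ty,\tz,\tw$ is unique since $\ty\tz\tw$ is nondegenerate), and the edges and the segments from $x$ to $y,z,w$ inside $D'$ are minimizing curves, hence geodesics of $X$; these must be the very geodesics used to form $D$, so $D'=D$. I expect the main obstacle to be not any one hard step but pinning down this last identification precisely—the nondegeneracy of $\ty\tz\tw$ and the uniqueness statement—while the genuinely new ingredient for invoking Theorem~\ref{thm:B} is the positive weight vector produced above.
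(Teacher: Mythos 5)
Your reduction of Theorem~\ref{thm:C} to Theorem~\ref{thm:B} is legitimate and is genuinely different from the proof the paper gives: in Section~\ref{sect:6} the paper deliberately proves Theorem~\ref{thm:C} independently of Theorem~\ref{thm:B}, via Lemma~\ref{lem:tech}, building $D$ as the union of the three regions $D_y, D_z, D_w$ and using the non-negative curvature of the tangent cone (Proposition~\ref{prop:Sigma}) and the quantity $S(\theta_1,\theta_2)$ to prove, for every $p\in(x,y)$ and $q\in xz\cup xw$, the existence of a \emph{unique} geodesic $pq$ and the equality of angles with comparison angles. Your route instead supplies exactly the positive weights that the paper leaves unwritten when it says Theorem~\ref{thm:C} is a rewriting of Theorem~\ref{thm:B} ``with no difficulty'': your observation that $(\cos\theta_{ij})_{ij}$ is the Gram matrix of three unit vectors in $\R^2$ which positively span the plane, hence is singular with a one-dimensional kernel spanned by a strictly positive vector, is correct, and setting $\lambda_i=v_i/a_i$ does produce the hypothesis of Theorem~\ref{thm:B}; there is no circularity, since the paper's proof of Theorem~\ref{thm:B} in Section~\ref{sect:7} uses neither Theorem~\ref{thm:C} nor Lemma~\ref{lem:tech}. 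Granting Theorem~\ref{thm:B} as stated, your identification of the embedded picture (directions at $\tx$ of rank two, $\tx$ interior to the nondegenerate triangle $\ty\tz\tw$ inside a totally geodesic $M^2_\k$, convex hull equal to the closed triangular region) is sound, so the existence half of Theorem~\ref{thm:C} follows your way; what your route buys is brevity, while the paper's direct proof buys the pointwise uniqueness statements that make the region canonical.

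That is precisely where your argument has a genuine gap: in the uniqueness paragraph you conclude $D'=D$ from the coincidence of the three segments $xy, xz, xw$ inside $D'$ with those of $D$ (Lemma~\ref{lem:connect} does give uniqueness of these). But a point of $D'$ off these segments lies on a geodesic of $X$ between some $a\in(x,y)$ and $b\in(x,z)$ (say), and nothing you have established shows that the geodesic from $a$ to $b$ in $X$ is unique; Theorem~\ref{thm:B}'s statement does not provide this, whereas Step 3 of Lemma~\ref{lem:tech} is exactly this assertion. So matching the cevians does not yet force the two regions to agree. The gap is reparable inside your framework: all the points involved lie in $D$, which carries the model metric, so one can pick $e\in D$ with $d(b,a)+d(a,e)=d(b,e)$ (extend the model segment from $\tilde b$ through $\tilde a$ to the boundary of the region); then $\cangle(a;b,e)=\pi$, and either Plaut's proposition quoted in Section~\ref{sect:2} or Lemma~\ref{lem:connect} applied to the two-point configuration $\{b,e\}$ with weights $1/d(a,b)$, $1/d(a,e)$ gives the uniqueness of the geodesic $ab$, after which your identification $D'=D$ goes through. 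As written, however, the uniqueness clause of the theorem is not proved.
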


\begin{mcor}\label{cor:D}
Let $(X, d)$ be an Alexandrov space with curvature $\ge \k$. Suppose
that there exist a quadruple $T := \{x, y, z, w\}$ in $X$ with $w \in (y, z)$ and
${\rm peri}(x, y, z) < 2\pi/\sqrt\k$ if $\k > 0$, and an isometric embedding of $T$ into $M^2_\k$; this especially 
means that $d(x,w) = d(\tx, \tw)$. In addition, we assume that we have a point
$p \in (x,w)$. Then the same conclusion as in Theorem~\ref{thm:C} holds for the quadruple
$(p; x, y, z)$.
\end{mcor}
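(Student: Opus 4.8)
\textbf{Proof proposal for Corollary~\ref{cor:D}.}
The plan is to deduce Corollary~\ref{cor:D} from Theorem~\ref{thm:C} by verifying that the quadruple $(p; x, y, z)$ satisfies the hypotheses of that theorem; namely, that $p \notin [x,y] \cup [y,z] \cup [z,x]$ and that
$$\cangle(p; x, y) + \cangle(p; y, z) + \cangle(p; z, x) = 2\pi.$$
Here the comparison angles are computed from the intrinsic distances of $X$, but since $T = \{x,y,z,w\}$ embeds isometrically in $M^2_\k$ and $p \in (x,w)$, the five relevant distances $d(p,x), d(p,y), d(p,z), d(x,y), \ldots$ all agree with the corresponding model distances in $M^2_\k$; thus every comparison angle at $p$ in the quadruple $(p;x,y,z)$ equals the honest angle of the corresponding model configuration $\{\tp, \tx, \ty, \tz\}$ in $M^2_\k$, where $\tp \in (\tx, \tw)$ and $\tw \in (\ty, \tz)$.

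\emph{Step 1 (angle sum in the model).} First I would work entirely inside $M^2_\k$. Since $\tw$ lies on the segment $[\ty,\tz]$ and $\tp$ lies on the segment $[\tx,\tw]$, the ray from $\tp$ through $\tw$ separates the rays $\tp\ty$ and $\tp\tz$, so $\angle \ty\tp\tw + \angle \tw\tp\tz = \angle \ty\tp\tz$, while $\angle \ty\tp\tx = \pi - \angle \ty\tp\tw$ and $\angle \tx\tp\tz = \pi - \angle \tw\tp\tz$ because $\tx, \tp, \tw$ are collinear. Adding these three identities gives $\angle \tx\tp\ty + \angle \ty\tp\tz + \angle \tz\tp\tx = 2\pi$. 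Translating back to $X$ via the distance-preservation noted above yields the required equality of comparison angles.

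\emph{Step 2 (the non-degeneracy condition).} Next I would check $p \notin [x,y] \cup [y,z] \cup [z,x]$. Suppose, say, $p \in [x,y]$, i.e.\ $d(x,y) = d(x,p) + d(p,y)$; combined with $p \in (x,w)$, i.e.\ $d(x,w) = d(x,p) + d(p,w)$, and the corresponding model picture, this would force $\tp$ to lie on $[\tx,\ty]$ as well as on $[\tx,\tw]$ in $M^2_\k$, hence would force $\tx, \ty, \tw$ to be collinear; but $\tw \in (\ty,\tz)$ with $\tx \notin [\ty,\tz]$ (the latter being part of the hypothesis $x \notin [y,z]$ transported to the model, and needed to make $\Delta \tx\ty\tz$ a genuine triangle) makes this impossible. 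The cases $p \in [y,z]$ and $p \in [z,x]$ are handled symmetrically, the first using that $\tp \notin [\ty,\tz]$ because $\tp \in (\tx,\tw) \subset (\tx, [\ty,\tz])$ with $\tx \notin [\ty,\tz]$.

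\emph{Step 3 (invoke Theorem~\ref{thm:C} and identify $D$).} With both hypotheses verified, Theorem~\ref{thm:C} applies to $(p; x, y, z)$ and produces the unique closed triangular region $D$ containing $p$, isometric to the triangular region in $M^2_\k$ with vertices $\tx, \ty, \tz$ — which is exactly the assertion of the corollary. I expect the only subtle point to be the bookkeeping in Steps~1 and 2: one must make sure the perimeter condition $\mathrm{peri}(p,x,y) < 2\pi/\sqrt\k$ etc.\ holds so that all comparison angles at $p$ are defined (this follows since the whole configuration sits isometrically inside the triangular region of $\Delta \tx\ty\tz$, whose perimeter is $< 2\pi/\sqrt\k$ by hypothesis and whose sub-triangles therefore have smaller perimeter), and that $\Delta \tx\ty\tz$ is genuinely non-degenerate so that the model triangular region is well-defined — a consequence of $x \notin [y,z]$, which I would include explicitly in the setup even though it is implicit in "isometric embedding of $T$ into $M^2_\k$" together with $w \in (y,z)$ and $p \in (x,w)$. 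The main obstacle, such as it is, is thus not a deep one: it is entirely a matter of transporting the elementary planar angle identity through the isometric embedding and checking degeneracy does not occur.
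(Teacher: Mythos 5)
Your overall plan — verify the hypotheses of Theorem~\ref{thm:C} for the quadruple $(p;x,y,z)$ and then quote that theorem — is exactly the intended reduction (the paper leaves this proof to the reader, pointing to the appendix of Grove--Markvorsen). But Step~1 contains a genuine gap: you assert that, because $T=\{x,y,z,w\}$ embeds isometrically and $p\in(x,w)$, the distances $d(p,y)$ and $d(p,z)$ "agree with the corresponding model distances," and from this you transport the planar angle identity at $\tp$ back to $X$. Only $d(p,x)$, $d(p,w)$ and the six distances among $x,y,z,w$ are given; the equalities $d(p,y)=d(\tp,\ty)$ and $d(p,z)=d(\tp,\tz)$ are precisely the nontrivial rigidity content of the corollary and do not follow from the hypotheses by mere bookkeeping. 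The comparison inequality only gives one direction, $d(p,y)\ge d(\tp,\ty)$ and $d(p,z)\ge d(\tp,\tz)$, and with strict inequality there the comparison-angle sum at $p$ need not be $2\pi$ (for instance, $\cangle(p;x,y)$ computed in $X$ would then exceed the model angle, but $\cangle(p;y,z)$ could drop), so your Step~1 as written does not establish the hypothesis of Theorem~\ref{thm:C}.

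The missing step has a short standard proof which you should supply: since $d(x,w)=d(\tx,\tw)$ with $\tw\in(\ty,\tz)$, the law of cosines gives $\cangle(w;x,y)+\cangle(w;x,z)=\pi$; on the other hand $w\in(y,z)$ gives $\cangle(w;y,z)=\pi$, so the quadruple condition~\eqref{ineq:quad} at $w$ for $(w;y,p,z)$ yields $\cangle(w;p,y)+\cangle(w;p,z)\le\pi$, while the angle monotonicity (using $p\in(w,x)$) yields $\cangle(w;p,y)\ge\cangle(w;x,y)$ and $\cangle(w;p,z)\ge\cangle(w;x,z)$. Hence both are equalities, and the law of cosines then gives $d(p,y)=d(\tp,\ty)$ and $d(p,z)=d(\tp,\tz)$. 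Only after this is the quadruple $\{p,x,y,z\}$ isometric to $\{\tp,\tx,\ty,\tz\}\subset M^2_\k$, and your planar computation in Step~1 and the nondegeneracy discussion in Step~2 (where, as you note, one must tacitly exclude the degenerate case $x\in[y,z]$, and check the perimeter bounds, which do hold since everything lies in the comparison triangle of $\Delta xyz$) go through, so that Theorem~\ref{thm:C} applies as in your Step~3.
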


\begin{mcor}\label{cor:E}
Let $(X, d)$ be an Alexandrov space with curvature $\ge \k$. Suppose
that there exist two geodesics $\gamma : [0, l_1] \to X$ and $\eta : [0, l_2] \to X$ emanating from
the same point $o := \gamma(0) = \eta(0)$ of $X$, with ${\rm peri}(o, \gamma(l_1), \eta(l_2)) < 2\pi/\sqrt\k$ if $\k > 0$,
such that
\begin{equation}
\angle(\gamma, \eta) = \cangle(o; \gamma(l_1), \eta(l_2)).
\end{equation}
In addition, we assume that we have a point $p \in (\gamma(l^\prime_1), \eta(l^\prime_2))$ for some $l^\prime_i \in (0, l_i)$,
$i = 1, 2$. Then the same conclusion as in Theorem~\ref{thm:C} holds for the quadruple
$(p; o, \gamma(l_1), \eta(l_2))$.
\end{mcor}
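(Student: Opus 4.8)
Write $y:=\gamma(l_1)$, $z:=\eta(l_2)$, $a:=\gamma(l^\prime_1)$, $b:=\eta(l^\prime_2)$ and $\phi:=\angle(\gamma,\eta)$, so that $a\in(o,y)$, $b\in(o,z)$ and $p\in(a,b)$. The plan is simply to verify the hypotheses of Theorem~\ref{thm:C} for the quadruple $(p;o,y,z)$; its conclusion is then exactly what is asserted. Let $\tilde o,\tilde y,\tilde z$ be the vertices of the comparison triangle of $\Delta oyz$ in $M^2_\k$ and $\tilde D\subset M^2_\k$ the corresponding closed triangular region. The heart of the argument --- and, I expect, the main obstacle --- is to produce an isometric embedding of $\{o,y,z,p\}$ into $M^2_\k$ sending $o,y,z$ to $\tilde o,\tilde y,\tilde z$ and $p$ to an interior point $\tilde p$ of $\tilde D$. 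Granting this, the remaining hypotheses of Theorem~\ref{thm:C} for $(p;o,y,z)$ follow formally: the four points are distinct (since $\tilde p$ is interior and $\tilde o,\tilde y,\tilde z$ are vertices); when $\k>0$ the size bound holds because all six pairwise distances, hence all sub-triangle perimeters, are those of the configuration $\{\tilde o,\tilde y,\tilde z,\tilde p\}$ inside the spherically convex region $\tilde D$; $p$ lies on none of $[o,y],[y,z],[z,o]$ because $\tilde p$ lies off each of the corresponding sides, so the triangle inequality is strict; and the comparison angles $\cangle(p;o,y)$, $\cangle(p;y,z)$, $\cangle(p;z,o)$ equal the angles at $\tilde p$ of the triangles $\tilde p\tilde o\tilde y$, $\tilde p\tilde y\tilde z$, $\tilde p\tilde z\tilde o$, which tile $\tilde D$ and hence add up to $2\pi$. (The degenerate cases $\phi\in\{0,\pi\}$, where $\tilde D$ collapses to a segment and $p$ lies on a side, are to be disposed of separately and are immediate.)

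First I would propagate the rigidity at the apex $o$. For $0<s\le l_1$, $0<t\le l_2$ we have $\gamma(s)\in(o,\gamma(l_1))$ and $\eta(t)\in(o,\eta(l_2))$, so by the monotonicity of the comparison angle $\cangle(o;\gamma(s),\eta(t))\ge\cangle(o;\gamma(l_1),\eta(l_2))$, while $\cangle(o;\gamma(s),\eta(t))\le\angle(\gamma,\eta)$, the angle being the monotone limit, hence the supremum, of these comparison angles. Since $\angle(\gamma,\eta)=\cangle(o;\gamma(l_1),\eta(l_2))=\phi$, we get $\cangle(o;\gamma(s),\eta(t))=\phi$ for all such $s,t$. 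By the law of cosines this determines $d(\gamma(s),\eta(t))$, so the map carrying $\gamma(s)$ and $\eta(t)$ to the points at distances $s$ and $t$ from $\tilde o$ along the two sides of $\tilde D$ is an isometric embedding of $\gamma([0,l_1])\cup\eta([0,l_2])$ into $M^2_\k$; let $\tilde a,\tilde y,\tilde b,\tilde z$ be the images of $a,y,b,z$. In particular the three-point sets $\{o,a,b\}$, $\{y,a,b\}$ and $\{z,a,b\}$ embed isometrically onto the corresponding sub-configurations, and (in the non-degenerate case) $\tilde a,\tilde b$ are interior points of the sides $[\tilde o,\tilde y]$, $[\tilde o,\tilde z]$. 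Finally let $\tilde p$ be the point of $[\tilde a,\tilde b]$ with $d(\tilde a,\tilde p)=d(a,p)$; it exists because $d(\tilde a,\tilde b)=d(a,b)$ and $p\in(a,b)$, and it is an interior point of $\tilde D$.

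The crux is to show $d(o,p)=d(\tilde o,\tilde p)$, $d(y,p)=d(\tilde y,\tilde p)$ and $d(z,p)=d(\tilde z,\tilde p)$, which I would extract by looking from the interior point $a$ of the geodesic $\gamma$ (and, symmetrically, from $b$). Since $p\in(a,b)$, monotonicity gives $\cangle(a;o,p)\ge\cangle(a;o,b)$ and $\cangle(a;y,p)\ge\cangle(a;y,b)$; by the isometric embeddings of $\{o,a,b\}$ and $\{y,a,b\}$ these lower bounds equal the model angles $\angle(\tilde a;\tilde o,\tilde b)=\angle(\tilde a;\tilde o,\tilde p)$ and $\angle(\tilde a;\tilde y,\tilde b)=\angle(\tilde a;\tilde y,\tilde p)$ (the direction from $\tilde a$ to $\tilde p$ coinciding with that to $\tilde b$), and these two model angles sum to $\pi$ because $\tilde a$ is an interior point of the geodesic $[\tilde o,\tilde y]$ in $M^2_\k$. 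On the other hand $\cangle(a;o,y)=\pi$ since $a\in(o,y)$, so the quadruple condition applied to $(a;o,p,y)$ forces $\cangle(a;o,p)+\cangle(a;p,y)\le\pi$. Hence both of the above inequalities are equalities; feeding these angles, together with $d(a,o)=l^\prime_1=d(\tilde a,\tilde o)$, $d(a,y)=l_1-l^\prime_1=d(\tilde a,\tilde y)$ and $d(a,p)=d(\tilde a,\tilde p)$, into the law of cosines in the triangles $\{a,o,p\}$ and $\{a,y,p\}$ yields $d(o,p)=d(\tilde o,\tilde p)$ and $d(y,p)=d(\tilde y,\tilde p)$. Running the identical argument from $b\in(o,z)$ gives $d(z,p)=d(\tilde z,\tilde p)$ (and again $d(o,p)=d(\tilde o,\tilde p)$, consistently). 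All six pairwise distances of $\{o,y,z,p\}$ now agree with those of $\{\tilde o,\tilde y,\tilde z,\tilde p\}$, so the desired isometric embedding exists, and Theorem~\ref{thm:C} applies to $(p;o,\gamma(l_1),\eta(l_2))$ as in the first paragraph.

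The step I expect to be genuinely hard is this crux: the monotonicity of the comparison angle only supplies ``thick'' (lower) bounds, so the matching upper bounds --- which force $\Delta oyz$ to be developed flatly out to the chord through $p$ --- must be squeezed out of the quadruple condition at the interior point $a$ of $\gamma$, using crucially that the rigidity at $o$ has already flattened the sub-triangles $\Delta oab$ and $\Delta yab$ so that the two relevant model angles at $\tilde a$ sum to exactly $\pi$. A secondary, more technical point is that $X$ need not be a geodesic space, so throughout one argues with the betweenness relation and comparison angles rather than with honest geodesics; and when $\k>0$ some care is needed to keep the perimeters entering these comparison angles below $2\pi/\sqrt\k$, which one does using that every configuration constructed is isometric to a sub-configuration of the (spherically convex) comparison triangle and that $X$ has diameter $\le\pi/\sqrt\k$, after disposing of the degenerate borderline cases.
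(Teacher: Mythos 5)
Your argument is correct and is essentially the route the paper intends: the paper leaves Corollary~\ref{cor:E} to the reader, but its Lemma~\ref{lem:rigid} and Sublemma~\ref{sublem} carry out exactly your crux --- flatten $\gamma\cup\eta$ by squeezing $\cangle(o;\gamma(s),\eta(t))$ between angle monotonicity and $\angle(\gamma,\eta)$, then at the interior point $a$ (resp.\ $b$) combine monotonicity along the chord with the quadruple condition and $\cangle(a;o,y)=\pi$ to force $d(o,p)$, $d(y,p)$, $d(z,p)$ to equal the model distances, and finally feed the resulting embedded quadruple into Theorem~\ref{thm:C}. The technical points you flag (degenerate $\angle(\gamma,\eta)\in\{0,\pi\}$ and the perimeter bounds needed for $\k>0$) are handled at the same level of care as in the paper's own arguments, so I see no gap.
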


Corollaries~\ref{cor:D} and \ref{cor:E} have been shown already for finite dimensional Alexandrov
spaces by Grove--Markvorsen in the appendix of their paper~\cite{GM}. Theorems~\ref{thm:A}--
\ref{thm:C} are not found in the literature even for the finite dimensional case. We should
mention Shiohama's lecture note~\cite{Sh} as well, where Corollary~\ref{cor:E} were proven under
additional assumptions.

Of course, Theorem~\ref{thm:A} includes Theorems~\ref{thm:B} and \ref{thm:C}. However, we provide the
proofs to each of Theorems~\ref{thm:C}, \ref{thm:B} and \ref{thm:A} in Sections~\ref{sect:6}, ~\ref{sect:7} and \ref{sect:8}, respectively. This
is for the sake of those who wish to understand the proof of Theorems~\ref{thm:B} or \ref{thm:C}
only. This will also help the reader follow the argument in the proof of the most
general Theorem~\ref{thm:A}.

\section{Preliminaries}\label{sect:5}
In this section, we collect several preliminary results, which will be used in the
proofs of Theorems~\ref{thm:A}--\ref{thm:C} later. For those who wants to understand the proof of
Theorem~\ref{thm:C}, only the following lemma will be required.

\begin{lem}\label{lem:connect}
Under the assumption of Theorem~\ref{thm:B}, there exist a unique geodesic
joining $p$ to $x_i$ and a unique antipode of the direction $\arrow{x_i}{p} \in \Sigma_p$ for each $i \in I$.
\end{lem}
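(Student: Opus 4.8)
The plan is to exploit the equality hypothesis
$\sum_{i,j\in I}\lambda_i\lambda_j\arrowprod{px_i}{px_j}=0$ together with the proof
of Proposition~\ref{prop:LS} given above, which shows that each individual
Cauchy--Schwartz-type step in the Lang--Schroeder reduction must be (almost) an
equality. First I would reduce to the case $\bra{x_i\,|\,i\in I}\subset B(p,\pi/\sqrt\k)$
(if $\k>0$ and some $d(p,x_{i_0})=\pi/\sqrt\k$, then $\arrowprod{px_{i_0}}{px_j}=0$
forces everything to degenerate and the statement is vacuous or trivial for that
index). Next, approximate each $x_i$ by points $x_i^k\in J_p$ with $x_i^k\to x_i$,
so that (by angle monotonicity) $\arrowprod{px_i}{px_j}\ge\ip{\log_p x_i^k}{\log_p x_j^k}$
in $C_p$; the equality hypothesis then forces $\sum_{i,j}\lambda_i\lambda_j\ip{\log_p x_i^k}{\log_p x_j^k}\to 0$,
reducing matters to a near-equality statement for the quadratic form on the tangent
cone $C_p$.

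The key step is then a "rigidity of Cauchy--Schwartz in $C_p$" argument. In the
sketched proof of Proposition~\ref{prop:LS}, one repeatedly replaces the pair
$(\lambda_1\cdot\xi_1,\lambda_2\cdot\xi_2)$ by a single vector $\lambda_{12}\cdot\xi_{12}$
with $\lambda_{12}^2=\lambda_1^2+\lambda_2^2+2\lambda_1\lambda_2\ip{\xi_1}{\xi_2}$,
using Sublemma~\ref{sublem:subad}; after exhausting all indices one arrives at
$\lambda_0^2+\clambda_0^2+2\lambda_0\clambda_0\ip{\xi_0}{\cxi_0}\ge 0$. If the total
sum is zero, then $\ip{\xi_0}{\cxi_0}=-1$, i.e.\ $\cxi_0$ is an \emph{antipode} of
$\xi_0=\arrow{x_{i_0}}{p}$ in $\Sigma_p$; since $i_0\in I$ was arbitrary, every
direction $\arrow{x_i}{p}$ has an antipode. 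To also get uniqueness of the geodesic
$p\,x_i$: two distinct geodesics from $p$ to $x_i$ would give two distinct elements
$\xi,\xi'\in\Sigma_p$ with $\ip{\xi}{\cxi_0}=\ip{\xi'}{\cxi_0}=-1$ (because the
distance-realizing property forces comparison angle $\pi$ at $p$ against the
antipodal direction in the model), whence $\ip{\xi}{\xi'}=1$ in $C_p$ by the
triangle inequality in $\Sigma_p$ ($\angle(\xi,\cxi_0)=\angle(\xi',\cxi_0)=\pi$
forces $\angle(\xi,\xi')=0$), contradicting $\xi\ne\xi'$ in $\Sigma_p^\prime$.
I would also need to make sure that the $x_i$ themselves are among the points for
which an actual geodesic to $p$ exists, not merely a sequence $\{x_i^k\}$; this
follows once the antipode is in hand, because the equality case pins down the
comparison configuration enough to produce a genuine geodesic (alternatively, the
limit of the geodesics $p\,x_i^k$ converges, using that the equality rigidifies the
angles and prevents the limit from degenerating).

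The main obstacle I anticipate is the bookkeeping in the reduction process: the
equality hypothesis is a \emph{global} statement about the full quadratic form, and
one must track that, at each application of Sublemma~\ref{sublem:subad}, the loss
$\epsilon$ is negligible \emph{and} that no strict inequality was hidden in an
earlier step—i.e.\ that equality in the final two-term expression propagates
backward to equality (hence an antipode) for the originally chosen index $i_0$. The
cleanest way to handle this is to choose the elimination order so that the \emph{last}
surviving index is exactly $i_0$, run the argument, note that the sum being zero
forces the final expression to be zero, deduce $\ip{\xi_{i_0}}{\cxi_0}=-1$, and then
observe that $i_0$ was arbitrary. A secondary technical point is passing from the
$J_p$-approximants back to the actual points $x_i$: here I would use that once an
antipode direction is available at $p$, the comparison angle $\cangle(x_i^k;p,\cdot)$
against a point in that antipodal direction tends to $\pi$, placing $x_i$ (as a
limit) inside $J_p$ by Plaut's Proposition, and simultaneously yielding the
genuine geodesic $p\,x_i$ and its uniqueness.
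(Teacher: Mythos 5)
Your outline---approximate the $x_i$ by points of $J_p$, use the near-vanishing of the quadratic form, run the reduction of Sublemma~\ref{sublem:subad} so that the chosen index $i_0$ survives to the end, and read off near-antipodality from the final two-term expression---is the same strategy as the paper's proof, but there is a genuine gap at the point where you pass from the approximants back to $x_{i_0}$ itself. You state the conclusion as $\ip{\xi_0}{\cxi_0}=-1$ with $\xi_0=\arrow{x_{i_0}}{p}$, yet at that stage no geodesic from $p$ to $x_{i_0}$ is known to exist, so this direction is not yet defined: the reduction only produces directions $\arrow{x^k}{p}$ to approximants $x^k\in J_p$, and the existence (and uniqueness) of their limit is exactly what the lemma asserts. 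Your fallback, that ``the limit of the geodesics $p\,x^k_i$ converges because the equality rigidifies the angles,'' is the statement to be proved, and since $X$ is not assumed locally compact there is no compactness from which to extract a limit. The missing mechanism, which is the heart of the actual proof, is to apply Sublemma~\ref{sublem:subad} with the finite set $Z$ containing the directions to \emph{two} approximants $x^k,x^l$ of the same point $x_{i_0}$ simultaneously: the sublemma then yields a single $\cx\in J_p$ and $\clambda_0>0$ such that the near-equality forces both $\ip{\arrow{x^k}{p}}{\arrow{\cx}{p}}$ and $\ip{\arrow{x^l}{p}}{\arrow{\cx}{p}}$ to be close to $-1$, whence
$$\angle\(\arrow{x^k}{p},\arrow{x^l}{p}\)\le 2\pi-\angle\(\arrow{x^k}{p},\arrow{\cx}{p}\)-\angle\(\arrow{x^l}{p},\arrow{\cx}{p}\)\le 2\epsilon.$$
This Cauchy estimate is what replaces compactness: it makes $\barrow{x^k}{p}$ converge in $(\Sigma_p,\angle)$, hence (by angle monotonicity and completeness) the geodesics $px^k$ converge to a genuine geodesic from $p$ to $x_{i_0}$, and letting $\epsilon\to0$ the directions $\arrow{\cx}{p}$ converge to the antipode. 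With only one approximant in play at a time, your $\epsilon$-bookkeeping controls each direction separately and never yields Cauchyness.

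Your uniqueness argument has a related gap: you assert that a second geodesic from $p$ to $x_i$ would make angle $\pi$ with $\cxi_0$ ``because the distance-realizing property forces comparison angle $\pi$ against the antipodal direction,'' but the construction controls only tangent-cone inner products (angles) against the auxiliary points $\cx$, not comparison angles of the form $\cangle(p;x_i,\cx)$, and the mere existence of an antipode of one direction does not by itself exclude a second geodesic to $x_i$ with a different direction. The repair, implicit in the paper, is that the Cauchy estimate above applies to \emph{every} sequence in $J_p$ converging to $x_{i_0}$; since interior points of any geodesic $\sigma$ from $p$ to $x_{i_0}$ lie in $J_p$ (geodesics do not branch), feeding them into the estimate forces the direction of $\sigma$ to coincide with the limit direction, and two geodesics from $p$ to $x_{i_0}$ making zero angle at $p$ coincide by angle monotonicity. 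With these two points supplied, your argument becomes essentially the paper's proof.
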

Here, by the antipode of $\xi \in \Sigma_p$, we mean the element of $\Sigma_p$, denoted by $-\xi$,
such that $\angle(\xi,-\xi) = \pi$. If the antipode exists, it must be unique for metric
spaces with lower curvature bound.

\begin{proof}
Replacing $\lambda_i d(p, x_i)$ with $\lambda_i$, we may assume that
\begin{equation}
\sum_{i, j\in I} \lambda_i\lambda_j \cos \cangle(p; x_i, x_j) = 0.
\end{equation}
Fix $x_0 := x_{i_0}$ and $\lambda_0 := \lambda_{i_0}$ for some $i_0 \in I$. Take a sequence
$\bra{x^k}$ from $J_p$ such
that $x^k \to x_0$ as $k \to \infty$. We put $x^\prime_{i_0} := x^k$ for some large $k \gg 1$.

For any small $\epsilon > 0$, find a large $K > 0$ such that $k \ge K$ implies that
\begin{equation}
(1 - \epsilon)\lambda_0^2(1 - \cos \epsilon) \ge
\sum_{i, j\in I} \lambda_i\lambda_j \cos \cangle(p; x^\prime_i, x^\prime_j)
\end{equation}
for some point $x^\prime_i$ of $J_p$ near $x_i$ for each $i \ne i_0$.

By Sublemma~\ref{sublem:subad}, we can find $\clambda_0 > 0$ and $\cx \in J_p$ so that
\begin{align*}
\sum_{i, j\in I} \lambda_i\lambda_j \cos \cangle(p; x^\prime_i, x^\prime_j)
&\ge \sum_{i, j\in I} \lambda_i\lambda_j \ip{\arrow{x^\prime_i}{p}}{\arrow{x^\prime_j}{p}}\\
&\ge \lambda_0^2 + \clambda_0^2 + 2\lambda_0\clambda_0 \ip{\arrow{z}{p}}{\arrow{\cx}{p}}
- \epsilon\lambda_0^2(1 - \cos\epsilon)\\
&\ge \lambda_0^2(1 + \ip{\arrow{z}{p}}{\arrow{\cx}{p}}) - \epsilon\lambda_0^2(1 - \cos\epsilon)
\end{align*}
for any $z \in \bra{x^k, x^l}$ with $k, l > K$. We used that
$\ip{\arrow{z}{p}}{\arrow{\cx}{p}}$ is non-positive.

This yields that $\angle(\arrow{z}{p}, \arrow{\cx}{p}) \ge \pi - \epsilon$ and hence
$$\angle\(\arrow{x^k}{p}, \arrow{x^l}{p}\)
\le 2\pi - \angle\(\arrow{x^k}{p}, \arrow{\cx}{p}\) - \angle\(\arrow{x^l}{p}, \arrow{\cx}{p}\)
\le 2\epsilon$$
for $k, l > K$, which implies that $\barrow{x^k}{p}$ is a Cauchy sequence in $(\Sigma_p, \angle)$.

Therefore, there exist a unique geodesic $\gamma_i : [0, l_i] \to X$ from $p$ to $x_i$ and the
antipode of the direction $\arrow{x_i}{p} \in \Sigma_p$, denoted by $-\arrow{x_i}{p}$, for each $i \in I$.
\end{proof}

The isometric embedding in the proofs of Theorems~\ref{thm:A} and \ref{thm:B} relies on the
following lemma (cf.~\cite{Mi}).
\begin{lem}\label{lem:embed}
Let $(\Sigma_p, \angle)$ be a space of directions at a point $p$ of an Alexandrov
space with lower curvature bound. Suppose that we have a subset $Z$ of $\Sigma_p$ such
that for any $\xi \in Z$, we can find its antipode $-\xi \in Z$. Then there exist a Hilbert
space $\H$ and an isometric embedding of $Z$ into the unit sphere of $\H$ equipped with
the angle metric.
\end{lem}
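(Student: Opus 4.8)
The plan is to realize $Z$ inside a Hilbert space by a Gram--matrix / Kuratowski-type construction, where the ``candidate inner product'' on the abstract set $Z$ is exactly the cosine of the angle metric, and then to verify that this candidate is genuinely an inner product, i.e. that the associated bilinear form is positive semi-definite. The key point is that Proposition~\ref{prop:LS} (Lang--Schroeder) gives precisely this positive semi-definiteness on finite subsets of $C_p$, hence on finite subsets of $Z\subset\Sigma_p$: for any $\xi_1,\dots,\xi_N\in Z$ and any $\lambda_1,\dots,\lambda_N\in\R_+$ one has $\sum_{i,j}\lambda_i\lambda_j\cos\angle(\xi_i,\xi_j)\ge 0$. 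The antipode hypothesis is what lets us upgrade this from $\R_+$-coefficients to arbitrary real coefficients: given $\xi\in Z$ with $-\xi\in Z$, the identity $\cos\angle(-\xi,\zeta)=-\cos\angle(\xi,\zeta)$ (which holds in spaces with lower curvature bound, since $\angle(\xi,\zeta)+\angle(-\xi,\zeta)\le\pi$ always and the reverse inequality follows from the triangle inequality for angles together with $\angle(\xi,-\xi)=\pi$) means that flipping the sign of a coefficient $\lambda_i$ can be simulated by replacing $\xi_i$ with $-\xi_i\in Z$. Therefore the form $(\xi,\zeta)\mapsto\cos\angle(\xi,\zeta)$ is positive semi-definite on \emph{all} of $Z$ with arbitrary real coefficients.

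Concretely, I would first fix, for the purposes of the argument, the real vector space $V$ of finitely supported formal real linear combinations $\sum_i\lambda_i\delta_{\xi_i}$ of elements of $Z$, and define a symmetric bilinear form $B$ on $V$ by $B(\delta_\xi,\delta_\zeta):=\cos\angle(\xi,\zeta)$ extended bilinearly. The previous paragraph shows $B(v,v)\ge 0$ for all $v\in V$. Then the usual construction applies: let $W:=\{v\in V\mid B(v,v)=0\}$, which by Cauchy--Schwarz for the semi-definite form $B$ coincides with the radical $\{v\mid B(v,\cdot)\equiv 0\}$; the quotient $V/W$ carries a genuine inner product induced by $B$; and the Hilbert space $\H$ is taken to be the completion of $V/W$. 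The map $\iota\colon Z\to\H$ sending $\xi$ to the class of $\delta_\xi$ then satisfies $\ip{\iota(\xi)}{\iota(\zeta)}_\H=\cos\angle(\xi,\zeta)$, and in particular $\|\iota(\xi)\|_\H^2=\cos\angle(\xi,\xi)=\cos 0=1$, so $\iota$ lands in the unit sphere $\S(\H)$.

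It then remains to check that $\iota$ is isometric for the angle metric on $Z$ and the angle metric on $\S(\H)$. This is immediate: the angle metric on the unit sphere of a Hilbert space is by definition $\arccos\ip{\cdot}{\cdot}_\H$ restricted to unit vectors, and $\arccos\bigl(\ip{\iota(\xi)}{\iota(\zeta)}_\H\bigr)=\arccos\bigl(\cos\angle(\xi,\zeta)\bigr)=\angle(\xi,\zeta)$ since $\angle(\xi,\zeta)\in[0,\pi]$, the range on which $\arccos\circ\cos$ is the identity. In particular $\iota$ is injective (distinct directions have positive angle), so it is an isometric embedding onto its image.

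The main obstacle, and the only place real content enters, is the reduction of positive semi-definiteness with arbitrary real coefficients to Proposition~\ref{prop:LS} (which is stated only for $\R_+$-coefficients): one must argue carefully that the antipode identity $\cos\angle(-\xi,\zeta)=-\cos\angle(\xi,\zeta)$ holds for \emph{every} $\zeta\in Z$, not just for $\zeta$ in some geodesic direction, and that replacing each $\xi_i$ with negative $\lambda_i$ by its antipode $-\xi_i$ does not disturb the pairings among the unchanged indices nor the self-pairings. The first of these needs the two-sided angle estimate $\angle(-\xi,\zeta)=\pi-\angle(\xi,\zeta)$, which in turn uses $\angle(\xi,-\xi)=\pi$ together with the triangle inequality for the angle (valid in spaces with curvature bounded below); the second is a bookkeeping check. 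Everything after that—Cauchy--Schwarz for a semi-definite form, passing to the quotient and completing, and recognizing $\arccos\circ\cos=\mathrm{id}$ on $[0,\pi]$—is routine.
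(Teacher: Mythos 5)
Your proposal is correct and follows essentially the same route as the paper: use the antipode identity $\angle(-\xi,\zeta)=\pi-\angle(\xi,\zeta)$ to reduce arbitrary real coefficients to the nonnegative-coefficient case of Proposition~\ref{prop:LS}, conclude that $\cos\angle$ is a kernel of positive type on $Z$, and then perform the standard quotient-and-completion construction with $\xi\mapsto\delta_\xi$ landing in the unit sphere. The only difference is cosmetic: you spell out the justification of the antipode identity and the $\arccos\circ\cos$ verification, which the paper leaves implicit.
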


\begin{proof}
Since $\angle(\xi, \eta) = \pi - \angle(-\xi, \eta)$ for any $\xi \in Z$ and $\eta \in \Sigma_p$, it follows that for
any finite sequences $\bra{\xi_i}_{i \in I} \subset Z$ and $\bra{\lambda_i}_{i \in I} \subset \R$,
$$\sum_{i, j\in I} \lambda_i\lambda_j \ip{\xi_i}{\xi_j}
= \sum_{i, j\in I} |\lambda_i| |\lambda_j| \ip{{\rm sgn}(\lambda_i)\xi_i}{{\rm sgn}(\lambda_j)\xi_j}
\ge 0,$$
that is, $\ip{\cdot}{\cdot}$ is a \textit{kernel of positive type} on $Z$. Then the construction of the
isometric embedding of $Z$ is done by a standard method (e.g.~\cite{BL}) which we
now explain.

Consider the vector space $\H^\prime$ consisting of all functions $f : Z \to \R$ with finite
supports. We equip $\H^\prime$ with the inner product given by
$$\ip{f}{g} := \sum_{\xi, \eta\in Z} f(\xi)g(\eta) \ip{\xi}{\eta} \text{ for } f, g \in \H^\prime,$$
and set $\H_0 := \bra{f \in H^\prime \bigm| \ip{f}{f} = 0}$. The Cauchy--Schwartz inequality implies
that $\H_0$ is a vector space.

We define the Hilbert space $(\H, \ip{\cdot}{\cdot})$ as the completion of $(\H^\prime/\H_0, \ip{\cdot}{\cdot})$. Now
$\H$ and the map assigning to each $\xi \in Z$ the characteristic function of the one
point set $\{\xi\}$ are what we are seeking for.
\end{proof}

Next, let us consider the situation where we have two geodesics $\gamma_i : [0, l_i] \to X, i = 1, 2$,
starting the same point $p$ of $X$ such that
\begin{equation}
\cangle(p; \gamma_1(l_1), \gamma_2(l_2)) = \angle(\gamma_1, \gamma_2).
\end{equation}
Here we intend to prove a sort of rigidity lemma (cf. Corollary~\ref{cor:E}), which we will need
in the proof of Theorem~\ref{thm:A}.

\begin{lem}\label{lem:rigid}
Let $x^\prime_i := \gamma_i(l^\prime_i)$ for some $0 < l^\prime_i < l_i$, $i = 1, 2$. Take a sequence $\bra{x^k}$ 
in $J_p$ such that $\bra{x^k} \in (x^\prime_1, x^\prime_2)$ and geodesics $\eta^k$ from $p$ to $x^k$.
Then for any $\epsilon > 0$
and $i = 1, 2$, we have
$$\cangle\(p; x_i, \bra{x^k}\)
= \cangle\(p; \gamma_i(\epsilon), \bra{\eta^k(\epsilon)}\).$$
In particular, if we have a direction $\arrow{x}{p}$ in $(\Sigma^\prime_p, \angle)$
with $x \in (x^\prime_1, x^\prime_2)$, then
$$\cangle(p; x_i, x) = \angle(\arrow{x_i}{p}, \arrow{x}{p}) \qquad\text{ for  } i = 1, 2.$$
\end{lem}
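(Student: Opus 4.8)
The plan is to exploit the equality $\cangle(p;\gamma_1(l_1),\gamma_2(l_2))=\angle(\gamma_1,\gamma_2)$ together with the two monotonicities available to us --- the monotonicity of the comparison angle under sliding a point toward $p$, and the fact that $\angle\ge\cangle$ always --- to force a whole family of comparison angles to be constant. The key geometric observation is that, since $\bra{x^k}\in(x'_1,x'_2)$, the point $\bra{x^k}$ lies on a geodesic from $x'_1$ to $x'_2$, so that in the model surface the comparison triangle $\tilde\Delta p\gamma_1(l_1)\gamma_2(l_2)$, which by the equality hypothesis is a \emph{developed} (degenerate-angle) triangle realizing the true angle at $p$, contains the developed segment corresponding to $[x'_1,x'_2]$; thus $\tilde x^k$ sits on that segment. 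First I would set up the comparison configuration: write $\alpha:=\angle(\gamma_1,\gamma_2)$, and observe that the equality hypothesis forces $\cangle(p;x_1,x_2)=\alpha$ for the endpoints $x_1:=\gamma_1(l_1)$, $x_2:=\gamma_2(l_2)$, hence also $\cangle(p;x'_1,x'_2)=\alpha$ by angle monotonicity (sliding $x_i$ in toward $x'_i$ can only increase $\cangle$, but it is already bounded above by $\angle(\gamma_1,\gamma_2)=\alpha$ from the other side).

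Next I would run the following chain of inequalities for fixed $i$, say $i=1$, and fixed small $s,t>0$ with $x^k$ in the game. On one hand, the angle $\angle(\gamma_1,\eta^k)$ at $p$ between the geodesic $\gamma_1$ and the geodesic $\eta^k$ to $x^k$ satisfies $\angle(\gamma_1,\eta^k)\le\cangle(p;\gamma_1(l_1),x^k)$ by angle monotonicity, and $\angle(\gamma_1,\eta^k)\le\cangle(p;\gamma_1(\epsilon),\eta^k(\epsilon))$ similarly; in both cases the reverse inequality $\angle\le\cangle$ is the wrong direction, so I must instead use the \emph{triangle inequality for angles at $p$}: since $x^k\in(x'_1,x'_2)$, Alexandrov's lemma / the first variation gives $\angle(\gamma_1,\eta^k)+\angle(\eta^k,\gamma_2)\le\angle(\gamma_1,\gamma_2)=\alpha$, while on the other hand the comparison angles satisfy $\cangle(p;x'_1,x^k)+\cangle(p;x^k,x'_2)\ge\cangle(p;x'_1,x'_2)=\alpha$ because the comparison-triangle hinge at $p$ for the concatenation along $[x'_1,x'_2]$ opens up at least as much as the single comparison angle (this is exactly Alexandrov's lemma applied in the model surface, where $\tilde x^k\in[\tilde x'_1,\tilde x'_2]$). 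Combining $\angle(\gamma_1,\eta^k)\ge\cangle(p;x'_1,x^k)$ (true because $x'_1\in\gamma_1$, slide out to $\gamma_1(l_1)$: $\angle(\gamma_1,\eta^k)\ge\cangle(p;\gamma_1(l_1),x^k)$ wait --- direction) with the two displayed sum relations pins every term: equality must propagate, so $\angle(\gamma_1,\eta^k)=\cangle(p;x'_1,x^k)=\cangle(p;x_1,x^k)$, and likewise after shrinking to parameter $\epsilon$ one gets $\cangle(p;\gamma_1(\epsilon),\eta^k(\epsilon))$ equal to the same quantity, since all the comparison angles between $\cangle(p;\gamma_1(\epsilon),\eta^k(\epsilon))$ and $\angle(\gamma_1,\eta^k)$ are squeezed by monotonicity into a single value. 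The final assertion, for a genuine direction $\arrow{x}{p}\in\Sigma'_p$ with $x\in(x'_1,x'_2)$, follows by taking $x^k\equiv x$: then $\eta^k$ is a fixed geodesic to $x$, the comparison angles $\cangle(p;x_i,x)$ coincide with $\cangle(p;\gamma_i(\epsilon),\eta(\epsilon))$ for all $\epsilon$, and letting $\epsilon\to0+$ the latter converges to $\angle(\arrow{x_i}{p},\arrow{x}{p})$ by definition of the angle; since the quantity is independent of $\epsilon$, the limit equals the constant value, giving $\cangle(p;x_i,x)=\angle(\arrow{x_i}{p},\arrow{x}{p})$.

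The main obstacle I anticipate is organizing the inequality bookkeeping so that the \emph{two} opposing monotonicities (sliding points toward versus away from $p$, and $\angle\le\cangle$) are deployed in the correct directions without circularity; in particular, one must be careful that $x^k\in(x'_1,x'_2)$ only gives control of the angle $\angle(\eta^k,\gamma_i)$ through the \emph{model-surface} statement that $\tilde x^k$ lies on the geodesic $[\tilde x'_1,\tilde x'_2]$ of the developed triangle --- and this relies crucially on the hypothesis that the triangle $\tilde\Delta px'_1x'_2$ is \emph{degenerate in its angle at $p$}, i.e.\ that $\cangle(p;x'_1,x'_2)=\alpha=\angle(\gamma_1,\gamma_2)$ is the true angle, which is precisely what the equality hypothesis $\cangle(p;\gamma_1(l_1),\gamma_2(l_2))=\angle(\gamma_1,\gamma_2)$ buys us. A secondary subtlety is the passage to the parameter $\epsilon$: one needs the chain of comparison angles $\cangle(p;\gamma_i(\epsilon),\eta^k(\epsilon))\ge\cdots\ge\cangle(p;x_i,x^k)\ge\angle(\gamma_i,\eta^k)$ all collapsing, which again is pure monotonicity but must be checked to be monotone in the right variable. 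Once these two points are set up, the rest is a routine squeeze argument, and no infinite-dimensionality issue arises since everything takes place among finitely many points and their model-surface comparisons.
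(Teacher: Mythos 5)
Your overall architecture --- squeeze the angles at $p$ between an upper bound on $\angle(\gamma_1,\eta^k)+\angle(\eta^k,\gamma_2)$ and a lower bound on $\cangle(p;x^\prime_1,\bra{x^k})+\cangle(p;\bra{x^k},x^\prime_2)$ --- rests on two inequalities, and both are unjustified; each is essentially the rigidity you are supposed to prove. (a) The bound $\angle(\gamma_1,\eta^k)+\angle(\eta^k,\gamma_2)\le\angle(\gamma_1,\gamma_2)$ is the \emph{reverse} of the triangle inequality for angles, and neither ``first variation'' nor Alexandrov's lemma gives it; the only general upper bound available (quadruple condition at $p$, passed to the limit) is $2\pi-\angle(\gamma_1,\gamma_2)$, which is useless unless the angle is $\pi$. (b) The bound $\cangle(p;x^\prime_1,\bra{x^k})+\cangle(p;\bra{x^k},x^\prime_2)\ge\cangle(p;x^\prime_1,x^\prime_2)$ is not a general fact either: your justification (``$\tx^k$ sits on the developed segment $[\tx^\prime_1,\tx^\prime_2]$'') already presupposes $d(p,\bra{x^k})=d(\tp,\tx)$, which is exactly the nontrivial distance rigidity. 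If $d(p,x^k)$ strictly exceeds the model value, the comparison-angle sum at $p$ can drop strictly below $\cangle(p;x^\prime_1,x^\prime_2)$: take Euclidean data $x^\prime_1=(-1,0)$, $x^\prime_2=(1,0)$, $w=(0,0)$, $p=(0,1)$ and enlarge $d(p,w)$ from $1$ to $1.2$; triangle comparison and the quadruple condition at $w$ still hold, but the two comparison angles at $p$ sum to about $88^\circ<90^\circ$. So (b) is equivalent to the distance rigidity, not a tool for proving it. You also reverse $\angle\ge\cangle$ in a couple of places, and you treat $\bra{x^k}$ as a point lying on an actual geodesic, which need not exist since $X$ is only a length space (this is why the paper works with sequences throughout), but these are secondary to the main gap.

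What the paper does instead, and what is missing from your plan: it first proves the distance equalities $d(p,\bra{x^k})=d(\tp,\tx)$, $d(x_1,\bra{x^k})=d(\tx_1,\tx)$ and $d(x^\prime_1,\bra{\eta^k(\epsilon^\prime)})=d(\tx^\prime_1,\widetilde{\eta(\epsilon^\prime)})$ (Sublemma~\ref{sublem}) by applying the quadruple condition at the vertices $x^\prime_1$ and $\bra{x^k}$ --- not at $p$ --- where the hypothesis $\cangle(p;\gamma_1(l_1),\gamma_2(l_2))=\angle(\gamma_1,\gamma_2)$ forces the three comparison angles to sum to exactly $2\pi$ (using $\cangle(x^\prime_1;p,x_1)=\pi$ and the equality case of Alexandrov's lemma), and then monotonicity pins each term. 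Second, even granted these equalities, the passage to the scale $\epsilon$ is not a pure monotonicity squeeze: monotonicity only gives $\cangle(p;\gamma_1(\epsilon),\bra{\eta^k(\epsilon)})\ge\cangle(p;x_1,\bra{x^k})$, and to cap it from above the paper uses Plaut's trick --- approximate $\eta^k(\epsilon^\prime)$ by points $z^{k,l}\in J_{x^\prime_1}$, take the geodesics $\sigma^{k,l}$ from $z^{k,l}$ to $x^\prime_1$, use the third distance equality to see that $\sigma^{k,l}(\epsilon)$ converges to $\gamma_1(\epsilon)$, and then run monotonicity along $\sigma^{k,l}$ to get $\cangle(p;\gamma_1(\epsilon),\bra{\eta^k(\epsilon)})\le\cangle(p;x_1,\bra{x^k})$ up to errors that vanish as $\epsilon^\prime\to0$. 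Your final squeeze would indeed work if you already knew $\angle(\gamma_1,\eta^k)=\cangle(p;x_1,x^k)$, but that equality is precisely the content your two asserted inequalities were supposed to deliver, so the proposal has a genuine gap at its core.
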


\begin{proof}
It suffices to show that LHS $\ge$ RHS for $i = 1$. We employ the technique
which was used by Plaut in the proof of \cite[Proposition 51]{Pl:Surv}. Fix a small positive
number $\epsilon^\prime \ll \epsilon$. We first verify the following.

\begin{sublem}\label{sublem}
Let $\tx \in (\tx^\prime_1, \tx^\prime_2)$ and $\widetilde{\eta(\epsilon^\prime)}$ be the corresponding points of $M^2_\k$.
Then we have
\begin{align}
&d(p, \bra{x^k}) = d(\tp, \tx) \quad\text{ and }\quad d(x_1, \bra{x^k}) = d(\tx_1, \tx).&\\
&d(x^\prime_1, \bra{\eta^k(\epsilon^\prime)}) = d(\tx^\prime_1, \widetilde{\eta(\epsilon^\prime)}).&
\end{align}
\end{sublem}

\begin{proof}
Indeed, by the angle monotonicity and the quadruple condition,
\begin{align*}
2\pi
&= \cangle(x^\prime_1; x^\prime_2, x_1) + \cangle(x^\prime_1; x_1, p) + \cangle (x^\prime_1; p, x^\prime_2)\\
&\le \cangle(x^\prime_1; \bra{x^k}, x_1) + \cangle(x^\prime_1; x_1, p) + \cangle(x^\prime_1; p, \bra{x^k})\\
&\le 2\pi.
\end{align*}
These inequalities must be equalities and the first equation follows.

To see the second equation, we use that
\begin{align*}
2\pi
&= \cangle(\bra{x^k}; x^\prime_1, p) + \cangle(\bra{x^k}; p, x^\prime_2) + \cangle(\bra{x^k}; x^\prime_2, x^\prime_1)\\
&\le \cangle(\bra{x^k}; x^\prime_1, \bra{\eta^k(\epsilon^\prime)})
+ \cangle(\bra{x^k}; \bra{\eta^k(\epsilon^\prime)}, x^\prime_2)
+ \cangle(\bra{x^k}; x^\prime_2, x^\prime_1)\\
&\le 2\pi,
\end{align*}
from which the desired equation follows.
\end{proof}

Fix large $k \gg 1$ and take $\bra{z^{k, l}} \subset J_{x^\prime_1}$ such that $z^{k, l} \to \eta^k(\epsilon^\prime)$ as $l \to \infty$ and
let $\sigma^l = \sigma^{k, l}$ be the geodesic from $z^{k, l}$ to $x^\prime_1$.
Note that $\sigma^{k, l}(\epsilon)$ approaches to $\gamma_1(\epsilon)$
as $k, l \to \infty$ and $\epsilon^\prime \to 0+0$. From this, we have
\begin{align*}
\cangle(p; \gamma_1(\epsilon), \eta^k(\epsilon)) - \tau(k, \epsilon^\prime)
&\le \cangle(p; \bra{\sigma^l(\epsilon)}, \eta^k(\epsilon))\\
&\le \cangle(p; \bra{\sigma^l(\epsilon)}, \eta^k(\epsilon^\prime))\\
&\le \cangle(p; x^\prime_1, \eta^k(\epsilon^\prime)).
\end{align*}

Letting $k \to \infty$ yields that
\begin{align*}
\cangle(p; \gamma_1(\epsilon), \bra{\eta^k(\epsilon)}) - \tau(\epsilon^\prime)
&\le \cangle(p; x^\prime_1, \bra{\eta^k(\epsilon^\prime)})\\
&= \cangle(p; x_1, \bra{x^k}).
\end{align*}
Since $\epsilon^\prime > 0$ is taken arbitrarily, this concludes the proof.
\end{proof}

If the Alexandrov space $X$ happens to be of finite dimension, then the space
of directions at any point is known to be $({\rm dim}X - 1)$-dimensional Alexandrov
space with curvature $\ge 1$ (\cite{BGP}). This fact played a crucial role in the proof of
the finite dimensional case of Corollaries~\ref{cor:D} and \ref{cor:E} in \cite[Appendix]{GM}.

However, this is not the case in the infinite dimensional case; a counterexample
showing that a space of directions at a point of an infinite dimensional Alexandrov
space may fail to be a length space is constructed by Halbeisen~\cite{Ha}. In spite
of this, we have the following proposition, which will turn out to be enough for
our purpose. Recall the definition of a metric space having curvature $\ge \k$ from
Definition~\ref{def:curv}.

\begin{prop}\label{prop:Sigma}
Let $(X, d)$ be an Alexandrov space with curvature $\ge \k$ and
$p \in X$ be a point. Then
\begin{enumerate}
\item
the tangent cone $(C_p, |\cdot|)$ at $p$ has curvature $\ge 0$, and
\item
the space of directions $(\Sigma_p, \angle)$ at $p$ has curvature $\ge 1$.
\end{enumerate}
\end{prop}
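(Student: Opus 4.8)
The plan is to prove the two statements together, since (2) follows from (1) by the general fact that a metric space is a Euclidean cone over a space of curvature $\ge 1$ if and only if the cone has curvature $\ge 0$; this cone characterization is standard (e.g.~\cite{BBI, BGP}) and I would simply invoke it after establishing (1). So the heart of the matter is (1): the tangent cone $(C_p, |\cdot|)$ has curvature $\ge 0$, i.e., it satisfies the quadruple condition~\eqref{ineq:quad} for $\k = 0$. By Theorem~\ref{thm:St} (applied with $\k = 0$), it is equivalent, and more convenient, to verify the Lang--Schroeder--Sturm inequality~\eqref{ineq:LSS} on $C_p$: for any $\zeta \in C_p$, finitely many $\xi_i \in C_p$ and $\lambda_i \in \R_+$,
$$
\sum_{i,j} \lambda_i \lambda_j \arrowprodnone{\zeta\xi_i}{\zeta\xi_j} \ge 0,
$$
where the inner product on the left is the one of the metric space $(C_p, |\cdot|)$ built from comparison angles at $\zeta$. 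One must also check $C_p$ is a length space, which is immediate from the cone construction over the complete space $(\Sigma_p, \angle)$.

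First I would reduce to the case where the base point $\zeta$ of the quadruple is the apex $o$ of the cone $C_p$. This is the standard homothety/limit trick: for a general $\zeta \in C_p$, rescale the cone metric around $\zeta$ by larger and larger factors; the rescalings of $C_p$ pointed at $\zeta$ converge to the tangent cone of $C_p$ at $\zeta$, which for a nonzero $\zeta$ splits off a line and is again a Euclidean cone, and the comparison quantities pass to the limit. Alternatively, and perhaps more cleanly in this length-space setting, one observes that the quadruple condition at a non-apex point of a cone reduces, via Alexandrov-type arguments on the cone, to the quadruple condition at the apex together with the curvature $\ge 1$ property of $\Sigma_p$ — but since that is exactly what we are trying to prove, it is safer to run the apex case directly and then handle general $\zeta$ by the rescaling limit. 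So the core computation is: for $\xi_i = \alpha_i \cdot \eta_i \in C_p$ with $\eta_i \in \Sigma_p$, $\alpha_i \ge 0$, one has $\arrowprodnone{o\xi_i}{o\xi_j} = \ip{\xi_i}{\xi_j} = \alpha_i \alpha_j \cos\angle(\eta_i, \eta_j)$ directly from the definition of the cone inner product, so
$$
\sum_{i,j} \lambda_i \lambda_j \arrowprodnone{o\xi_i}{o\xi_j} = \sum_{i,j} (\lambda_i\alpha_i)(\lambda_j\alpha_j) \cos\angle(\eta_i,\eta_j) = \sum_{i,j} \mu_i\mu_j \ip{\eta_i}{\eta_j} \ge 0
$$
by Proposition~\ref{prop:LS} (Lang--Schroeder), with $\mu_i := \lambda_i\alpha_i \ge 0$. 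Hence the quadruple condition holds at the apex essentially for free from the already-proven Lang--Schroeder inequality on the tangent cone.

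The main obstacle is therefore the passage from the apex to a general base point $\zeta \in C_p$: one must justify that the rescaled pointed spaces $(C_p, r\cdot|\cdot|, \zeta)$ converge (in the ultralimit / pointed-Gromov--Hausdorff sense appropriate to possibly non-locally-compact spaces) to a Euclidean cone $\R \times C'$ for some $C'$ of curvature $\ge 0$, and that the comparison angles $\zengle(\zeta; \xi_i, \xi_j)$ are recovered as the corresponding comparison angles (equivalently, genuine angles) in this limit — at which point the apex computation above applies at the apex of $C'$, and adding the flat $\R$ factor preserves~\eqref{ineq:LSS}. The technical care here is exactly the same kind of ultralimit bookkeeping used elsewhere in the paper (and in~\cite{Mi}); no new geometric input is needed beyond Proposition~\ref{prop:LS} and Theorem~\ref{thm:St}. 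Once (1) is in hand, (2) follows from the cone characterization of curvature $\ge 1$ spaces, completing the proof.
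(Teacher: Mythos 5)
Your reduction of (2) to (1) is where the argument breaks down, and it is precisely the point the paper is at pains about. The ``standard'' equivalence between curvature $\ge 0$ of a Euclidean cone and curvature $\ge 1$ of its base is proved in \cite{BBI, BGP} for (complete) length or geodesic spaces, and its proof uses geodesics in the cone in an essential way. Here neither $\Sigma_p$ nor $C_p$ is known to be a length space: Halbeisen's example~\cite{Ha}, quoted immediately before Proposition~\ref{prop:Sigma}, shows that the space of directions of an infinite dimensional Alexandrov space may fail to be a length space, and the cone over a non-length space is not a length space. For the same reason your claim that ``$C_p$ is a length space, which is immediate from the cone construction over the complete space $(\Sigma_p,\angle)$'' is false, so you cannot invoke Theorem~\ref{thm:St} on $C_p$ as stated (only the weak direction noted in the remark after Proposition~\ref{prop:LS}, that the $\#I=3$ case of \eqref{ineq:LSS} gives the quadruple condition in an arbitrary metric space, survives). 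If you try to prove the cone-to-base implication purely at the level of quadruples, you run into the concrete obstruction that the comparison configuration needed for $\onegle(\xi_0;\xi_i,\xi_j)$ does not live in $C_p$: the paper's device is to project the spherical comparison points $\txi_i$ to the tangent plane at $\txi_0$ (through $0$ when $\theta_i\le\pi/3$, through $2\txi_0$ when $\theta_i>\pi/3$), and the points obtained in the second case lie on chords, i.e.\ on geodesics that $C_p$ need not contain. This is exactly why Part (2) ``answers Plaut's question'' rather than following formally from Part (1); the paper's proof goes back into $X$ itself, realizes the projected configuration approximately by points $x^k_i$ at scale $\epsilon^k$ (using that $X$ is a length space), identifies the limiting rescaled distances via the technique of Lemma~\ref{lem:rigid}, and then applies the quadruple/triangle comparison in $(X,d^k)$, whose curvature bound $(\epsilon^k)^2\k$ tends to $0$.

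Two further remarks on Part (1). Your apex computation via Proposition~\ref{prop:LS} is fine, but the passage to a non-apex base point by blowing up $C_p$ at $\zeta$ is not justified in this setting: $C_p$ may have no geodesics through $\zeta$, the rescalings are not length spaces, and the claim that the ultralimit is a Euclidean cone splitting off a line would itself need the splitting theorem of~\cite{Mi}, whose hypotheses (complete length space of curvature $\ge 0$) you have not verified. This detour is also unnecessary: as the paper indicates, Part (1) follows almost immediately because distances in $C_p$ between points with directions in $\Sigma^\prime_p$ are limits of rescaled distances in $(X,d^k)$, so the quadruple condition \eqref{ineq:quad} with $\k=0$ is inherited at \emph{every} point of $C_p$ by passing to the limit in the quadruple condition of $(X,d^k)$ and then using density of $\Sigma^\prime_p$. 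So the proposal does not close the real gap, which is Part (2).
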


Part (1) follows from the definition almost immediately (cf.~\cite{Mi}). We give only
the proof of Part (2), which is more subtle in contrast to Part (1). This answers
Plaut's question; see the paragraph following Proposition 49 of \cite{Pl:Surv}.

\begin{proof}[Proof of Part (2)]
Fix any four points $\xi_i, i = 0, \dots, 3$, in $\Sigma^\prime_p$ and let $\xi_4 := \xi_1$. For
each $i = 0, \dots, 4$, we let $\gamma_i : [0, l_i] \to X$ be the unit speed geodesic representing
$\xi_i$ and find $\txi_i$ in the unit sphere $S$ of $\R^3$ such that
$$\theta_i := \angle(\txi_0, \txi_i) = \angle(\xi_0, \xi_i)
\quad\text{ and }\quad
\onegle(\txi_0; \txi_i, \txi_{i+1}) = \onegle(\xi_0; \xi_i, \xi_{i+1})$$
for $i = 1, 2, 3$.

Consider the plane $P$ in $\R^3$ tangent to $S$ at $\txi_0$.
For each $i = 1, \dots, 4$, let $\teta_i$ be the point on $P$ determined by
$$\bra{\teta_i} =
\begin{cases}
[2\txi_0, 2\txi_i] \cap P &\text{ for } i \text{ with } \theta_i > \pi/3;\\
[0, 2\txi_i] \cap P &\text{ for } i \text{ with } \theta_i \le \pi/3,
\end{cases}$$
and find a sequence $\bra{x^k_i}$ in $X$ such that
$$\lim_{k\to\infty} d^k(\gamma_0(2\epsilon^k), x^k_i) = \norm{2\txi_0 - \teta_i}
\text{ and }
\lim_{k\to\infty} d^k(\gamma_i(2\epsilon^k), x^k_i) = \norm{2\txi_i - \teta_i}.$$
Here, $d^k := (\epsilon^k)^{-1}d$ is the enlarged distance for some $\epsilon^k \to 0+$ as $k \to \infty$.

Then, by the Euclidean geometry,
$$\onegle(\xi_0; \xi_i, \xi_{i+1})
= \onegle(\txi_0; \txi_i, \txi_{i+1})
= \zengle(\txi_0; \teta_i, \teta_{i+1}),$$
and by the triangle comparison,
$$\sum^3_{i=1} \zengle\(\txi_0; \teta_i, \teta_{i+1}\)
\le \sum^3_{i=1} \zengle\(\bra{\gamma_0(\epsilon^k)}; \bra{x^k_i}, \bra{x^k_{i+1}}\)
\le 2\pi.$$
In deriving the first inequality, we used the technique used in the proof of the
previous proposition to get
$$\lim_{k\to\infty} d^k(\gamma_0(\epsilon^k), x^k_i)
= \norm{\txi_0 - \teta_i} \text{ for each } i = 1, \dots, 4.$$
The second inequality follows from that $(X, d^k)$ has curvature $\ge (\epsilon^k)^2\k \to 0$ as
$k \to \infty.$ This completes the proof.
\end{proof}

\section{Proof of Theorem~\ref{thm:C}}\label{sect:6}
In this section, we provide a proof of Theorem~\ref{thm:C}. As mentioned above, it is a
special case of Theorem~\ref{thm:B}. In spite of this, we decided to give an independent
proof, because the structures of their proofs are somewhat different form each
other. The reader may wish to skip this section.

At first, we establish the following technical lemma (cf. Corollary~\ref{cor:D}).
\begin{lem}\label{lem:tech}
Let $(X, d)$ be an Alexandrov space with curvature $\ge \k$. Suppose that
we have a quadruple $Y := (x; y, z, w)$ in $X$, with size is less than $2\pi/\sqrt\k$ if $\k > 0$,
such that
\begin{equation}
\cangle(x; y, z) + \cangle(x; z,w) + \cangle(x;w, y) = 2\pi.
\end{equation}

Assume in addition that
$\cangle(x; y, z) \cdot \cangle(x; y,w) > 0$ and that there exist geodesics $xy, xz, xw$ connecting $x$
and $y, z,w$, respectively.
Let $(\tx; \ty, \tz, \tw)$ be the corresponding quadruple in $M^2_\k$.

Then for any $p \in (x, y)$ and $q \in xz\cup xw$, there exists a unique geodesic segment
$pq$ connecting them and
$$D_y := \text{the closure of }  [xy \setminus \{x, y\}, xz \cup xw]$$
is isometric to the union of two triangular regions in $M^2_\k$ determined by $\Delta\tx\ty\tz$
and $\Delta\tx\ty\tw$.
\end{lem}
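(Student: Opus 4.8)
The plan is to first upgrade the hypothesis to a rigidity of the \emph{genuine} angles at $x$, then fill each of the triangles $\Delta xyz$ and $\Delta xyw$ into a flat piece, and finally glue the two pieces. \textbf{Step 1 (the angles at $x$).} Writing $\gamma_1,\gamma_2,\gamma_3$ for the given geodesics $xy,xz,xw$, I would apply the quadruple condition to $(x;\gamma_1(s),\gamma_2(s),\gamma_3(s))$ for small $s>0$ and let $s\to0+$: since $\angle(\gamma_i,\gamma_j)$ is the monotone limit from below of $\cangle(x;\gamma_i(s),\gamma_j(s))$ and always dominates it, this gives $\angle(xy,xz)+\angle(xz,xw)+\angle(xw,xy)\le2\pi$; as $\angle\ge\cangle$ pointwise and the three comparison angles sum to $2\pi$ by hypothesis, every inequality is an equality. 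Hence $\angle(xy,xz)=\cangle(x;y,z)$, $\angle(xz,xw)=\cangle(x;z,w)$, $\angle(xw,xy)=\cangle(x;w,y)$, these sum to $2\pi$, and $\cangle(x;y,z)\cdot\cangle(x;y,w)>0$ makes $\Delta\tx\ty\tz$ and $\Delta\tx\ty\tw$ nondegenerate.

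\textbf{Step 2 (each triangle is flat).} Treat $\Delta xyz$; the case $\Delta xyw$ is symmetric. First, squeezing $\cangle(x;y,z)\le\cangle(x;\gamma_1(s),\gamma_2(t))\le\angle(xy,xz)=\cangle(x;y,z)$ (monotonicity of the comparison angle, and $\angle\ge\cangle$) and using the law of cosines shows that the natural arclength-matching map $\Phi_z$ is distance preserving on $xy\cup xz$. The main work is then the filling statement: for $p\in(x,y)$ and $q\in xz$ there is a unique geodesic $pq$, and $\Phi_z$ extends to an isometry from $R_z:=\overline{[xy\setminus\{x,y\},xz]}$ onto the solid comparison triangle $\Delta\tx\ty\tz\subset M^2_\kappa$ carrying each $pq$ onto $[\Phi_z(p)\Phi_z(q)]$. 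For the existence of $pq$ I would argue as in Lemma~\ref{lem:connect}: taking $q^n\in J_p$ with $q^n\to q$, and combining the perimeter bound $\le2\pi$ for three directions in $\Sigma_p$ (which has curvature $\ge1$ by Proposition~\ref{prop:Sigma}), the relation $\angle(\arrow{x}{p},\arrow{y}{p})=\pi$ coming from $p\in(x,y)$, and the rigidity of Step~1 propagated to the sub-triangle $\Delta xpq$ (whose apex angle at $p$ it forces, again via the quadruple condition at $p$, to equal the comparison angle), one shows $\barrow{q^n}{p}$ is Cauchy in $(\Sigma_p,\angle)$; Lemma~\ref{lem:rigid} then pins down the directions $\arrow{v}{p}$ along $pq$. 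Flatness of all the $\Delta xpq$ makes $\Phi_z$ well defined on $R_z$ and an isometry onto $\Delta\tx\ty\tz$. Likewise $R_w:=\overline{[xy\setminus\{x,y\},xw]}\cong\Delta\tx\ty\tw$ via $\Phi_w$, with $\Phi_z$ and $\Phi_w$ agreeing on $xy$; and by Step~1, $\cangle(x;c,c')=\cangle(x;z,w)$ for all $c\in xz$ and $c'\in xw$, so $d(c,c')$ equals the corresponding distance in $M^2_\kappa$.

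\textbf{Step 3 (gluing).} Place $\tz$ and $\tw$ on opposite sides of the line through $\tx,\ty$ in $M^2_\kappa$, so that $\Delta\tx\ty\tz$ and $\Delta\tx\ty\tw$ meet exactly along $[\tx\ty]$, and let $\Phi\colon D_y=R_z\cup R_w\to M^2_\kappa$ be $\Phi_z$ on $R_z$ and $\Phi_w$ on $R_w$. This is well defined and injective once $R_z\cap R_w=xy$, which follows because a point $v$ of the intersection has a single direction $\arrow{v}{x}$ that, by $\angle(\arrow{v}{x},\arrow{z}{x})+\angle(\arrow{v}{x},\arrow{w}{x})\ge\angle(\arrow{z}{x},\arrow{w}{x})=\cangle(x;z,w)$ together with Step~1, is forced to sit on $xy$. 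It remains to prove $d(a,b)=d(\Phi(a),\Phi(b))$ for $a\in R_z$, $b\in R_w$. For the lower bound I would bootstrap the triangle comparison~\eqref{ineq:tri}: it holds for $a\in xz$, $b\in xw$ already (Step~1 gives $\cangle(x;a,b)=\cangle(x;z,w)$); it then follows for $(a\in R_z,\ b\in xw)$ by applying~\eqref{ineq:tri} with apex $b$ to a geodesic $pq\ni a$ ($p\in(x,y)$, $q\in xz$), since $d(b,p),d(b,q),d(p,q)$ are already model distances, so the comparison triangle matches $\Phi(b),\Phi(p),\Phi(q)$ and the comparison point of $a$ matches $\Phi(a)$; and the same scheme with apex $a$ and a geodesic $p'q'\ni b$ ($p'\in(x,y)$, $q'\in xw$) then gives the case $(a\in R_z,\ b\in R_w)$. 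For the upper bound I would decompose the $M^2_\kappa$-geodesic $[\Phi(a)\Phi(b)]$ at its crossings with $[\tx\tz]$ and $[\tx\tw]$ (or with $[\tx\ty]$, whichever occurs), say at $\Phi(c)$, $\Phi(c')$ with $c\in xz$, $c'\in xw$, and bound $d(a,b)\le d(a,c)+d(c,c')+d(c',b)=d(\Phi(a),\Phi(b))$, the middle distance being a model distance by the last remark of Step~2. Together these give $d(a,b)=d(\Phi(a),\Phi(b))$, so $\Phi$ is the asserted isometry.

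\textbf{Main obstacle.} The difficulty is concentrated in Step~2: producing the genuine interior geodesics $pq$ and the genuine isometry onto the \emph{solid} model triangle in a space that need not be locally compact or even geodesic, which requires pushing the antipode/angle-rigidity machinery of Section~\ref{sect:5} (Lemmas~\ref{lem:connect} and~\ref{lem:rigid}, Proposition~\ref{prop:Sigma}) somewhat beyond what is stated there. Granting Step~2, the assembly in Step~3 is essentially bookkeeping with the triangle comparison.
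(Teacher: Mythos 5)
Your Step 2 is where the proof actually lives, and as sketched it has a genuine gap. You propose to fill each triangle separately, producing the geodesic $pq$ for $p\in(x,y)$, $q\in xz$ by showing $\barrow{q^n}{p}$ is Cauchy from three ingredients: the bound $\le 2\pi$ on sums of angles in $\Sigma_p$, the relation $\angle(\arrow{x}{p},\arrow{y}{p})=\pi$, and the fact that the (limit) angles at $p$ toward $q^n$ equal the comparison angles. But these constraints only pin the angles of $\barrow{q^n}{p}$ to the single antipodal pair $\pm\arrow{x}{p}$; in a possibly infinite-dimensional $\Sigma_p$ there is a whole ``latitude sphere'' of directions making those two prescribed angles, so two indices $n,m$ can satisfy all your conditions while $\angle(\arrow{q^n}{p},\arrow{q^m}{p})$ stays bounded away from $0$ (the comparison angle $\cangle(p;q^n,q^m)\to 0$ only gives a lower bound). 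Nor can you ``argue as in Lemma~\ref{lem:connect}'': that lemma manufactures a near-antipode via Sublemma~\ref{sublem:subad} from a nondegenerate vanishing Lang--Schroeder--Sturm combination centered at $p$, and with points drawn from one triangle only, the directions from $\tp$ in the model all lie in a closed half-plane, so the only vanishing positive combination is the degenerate one along $\pm\arrow{x}{p}$, which gives no control of directions toward $xz$. So the existence/uniqueness of $pq$ and the isometry onto the solid triangle are not established by your sketch, and this is not a technicality you can defer --- it is the content of the lemma.

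The paper never treats a single triangle: its Steps 2--4 work with $q_1\in xz$ and $q_2\in xw$ simultaneously. Its Step 1 already gives that \emph{all} pairwise distances among points of $xy\cup xz\cup xw$ are model distances, so for $p\in(x,y)$ the three comparison angles $\cangle(p;q_1,q_2)+\cangle(p;q_1,y)+\cangle(p;q_2,y)$ equal exactly $2\pi$; squeezing against the genuine angles then pins $\angle$ between the two transversal families of directions. Cauchyness of $\barrow{q^k_1}{p}$ is then obtained in the tangent cone: the point $\alpha\cdot\arrow{x}{p}$ with $\alpha=S(\cangle(p;q_1,x),\cangle(p;q_2,x))$ lies metrically between $\barrow{q^k_1}{p}$ and $\barrow{q^l_2}{p}$ in $C_p$, so the comparison angles there tend to $\pi$, and the quadruple condition in the nonnegatively curved $C_p$ (Proposition~\ref{prop:Sigma}) forces $\zengle(\alpha\cdot\arrow{x}{p};\barrow{q^k_1}{p},\barrow{q^l_1}{p})\to 0$. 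This two-sidedness is precisely why the hypothesis $\cangle(x;y,z)\cdot\cangle(x;y,w)>0$ appears and why the conclusion is about the union $D_y$ of the two triangular regions rather than one of them; the paper then builds the isometry on the fans $D_p$ spanning both triangles at once, so your Step 3 gluing (which is otherwise reasonable bookkeeping, as is your Step 1) is not needed as a separate step. To repair your write-up, redo Step 2 with a point of $xw$ in play from the start, following this mechanism.
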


Before proceeding to the proof of this technical lemma,
we explain how Theorem~\ref{thm:C} follows from Lemma~\ref{lem:tech}.

\begin{proof}[Proof of Theorem~\ref{thm:C}]
At first, find an isometric embedding map $F_0 : Y \to M^2_\k$.
By applying Lemma~\ref{lem:connect}, we can find unique geodesics $xy$, $xz$, $xw$ from $x$ to $y, z, w$,
respectively. Using Lemma~\ref{lem:tech}, we extend the map $F_0$ to the isometric embedding
$F_{x^\prime} : D_{x^\prime} \to M^2_\k$ for each $x^\prime \in \{y, z,w\}$. Let $D :=
\bigcup\bra{D_{x^\prime} \,|\, x^\prime \in \{y, z,w\}}$. It is
clear that the natural map $F : D \to M^2_\k$ determined by $F|_{D_{x^\prime}} = F_{x^\prime}$ is gives the
isometry between $D$ and $F(D)$. Now the proof of Theorem~\ref{thm:C} is complete.
\end{proof}

\begin{proof}[Proof of Lemma~\ref{lem:tech}]
The proof is divided into several steps. First find an isometric
embedding map $F_0$ of $Y := (x; y, z, w)$ into the model surface $M^2_\k$.

\begin{step}
The trivial extension $F: xy\cup xz\cup xw \to M^2_\k$ of $F_0$ is a distance-preserving
map.
\end{step}

\begin{proof}
Take any three points $q_{x^\prime} \in xx^\prime \setminus \{x\}$ for each $x^\prime \in \{y, z, w\}$. Then, by the
angle monotonicity,
\begin{align*}
2\pi
&= \cangle(x; y, z) + \cangle(x; z, w) + \cangle(x; w, y)\\
&\le \cangle(x; q_y, q_z) + \cangle(x; q_z, q_w) + \cangle(x; q_w, q_y)\\
&\le 2\pi.
\end{align*}
Hence $\cangle (x; x^\prime, x^{\prime\prime}) = \cangle (x; qx^\prime, qx^{\prime\prime})$, or equivalently,
$d(q_{x^\prime}, q_{x^{\prime\prime}}) = d(F(q_{x^\prime}), F(q_{x^{\prime\prime}}))$ for any $x^\prime, x^{\prime\prime} \in \{y, z, w\}$.
\end{proof}

\begin{step}
Take points $p \in (x, y)$, $q_1 \in xz$ and $q_2 \in xw$. Then for any sequences
$\bra{q^k_1}$ and $\bra{q^l_2}$ in $J_p$ converging to $q_1$ and $q_2$, respectively, we have
$$\angle\(\barrow{q^k_1}{p}, \barrow{q^l_2}{p}\)
= \cangle(p; q_1, q_2).$$
\end{step}

\begin{proof}
As in the previous step,
\begin{align*}
2\pi
&= \cangle(p; q_1, q_2) + \cangle(p; q_1, y) + \cangle(p; q_2, y)\\
&= \cangle(p; \bra{q^k_1}, \bra{q^l_2}) + \cangle(p; \bra{q^k_1}, y) + \cangle(p; \bra{q^l_2}, y)\\
&\le \angle\(\barrow{q^k_1}{p}, \barrow{q^l_2}{p}\) + \angle\(\barrow{q^k_1}{p}, \arrow{y}{p}\)
+ \angle\(\barrow{q^l_2}{p}, \arrow{y}{p}\)\\
&\le 2\pi.
\end{align*}
Then we have equalities in the above and hence the desired equation follows.
\end{proof}

\begin{step}
For any $p \in (x, y)$ and $q \in xz \cup xw$, there exists a unique geodesic from
$p$ to $q$.
\end{step}

\begin{proof}
We may assume that $q_1 := q \in xz \setminus \{x\}$. We take any $q_2 \in xw \setminus \{x\}$ and
sequences
$\bra{q^k_1}$ and $\bra{q^l_2} \subset J_p$ converging to $q_1$ and $q_2$, respectively.
Now we introduce a notation. For two positive numbers $\theta_1, \theta_2 > 0$ with $\theta_1+\theta_2 <
2\pi$, we let
\begin{equation}
S(\theta_1, \theta_2) := \frac{\sin(\theta_1 + \theta_2)}{\sin \theta_1 + \sin \theta_2}.
\end{equation}

Let us explain where the definition of $S(\theta_1, \theta_2)$ comes from. Consider three
points $\xi, \eta_1, \eta_2$ on the unit circle in $\R^2$ with center $0 \in \R^2$ such that
$$\angle(\xi, \eta_i) = \theta_i, i = 1, 2 \qquad\text{ and }\qquad  \angle(\eta_1, \eta_2) = \theta_1 + \theta_2.$$
Then it is easily checked that $S(\theta_1, \theta_2)\xi$ lies on the line through $\eta_1 and \eta_2$.

Now we observe that
$\barrow{q^k_1}{p}$ is a Cauchy sequence in $\Sigma_p$. Indeed,
letting $\alpha := S(\cangle(p; q_1, x), \cangle(p; q_2, x))$,
we use the fact that the tangent cone $(C_p, |\cdot|)$ has
non-negative curvature (cf. Proposition~\ref{prop:Sigma}) to see that
\begin{align*}
&\zengle\(\alpha \cdot \arrow{x}{p}; \barrow{q^k_1}{p}, \barrow{q^l_1}{p}\)\\
&\le 2\pi
- \zengle\(\alpha \cdot \arrow{x}{p}; \barrow{q^k_1}{p}, \barrow{q^l_2}{p}\)
- \zengle\(\alpha \cdot \arrow{x}{p}; \barrow{q^l_1}{p}, \barrow{q^l_2}{p}\)\\
&= 2\pi - \pi - \pi = 0.
\end{align*}
We have used that
$$\norm{\barrow{q^k_1}{p} - \alpha \cdot \arrow{x}{p}} 
+ \norm{\alpha \cdot \arrow{x}{p} - \barrow{q^l_2}{p}} =
\norm{\barrow{q^k_1}{p} - \barrow{q^l_2}{p}}$$
and hence $\zengle\(\alpha \cdot \arrow{x}{p}; \arrow{q^k_1}{p}, \arrow{q^l_2}{p}\)$
goes to $\pi$ as $k, l \to \infty$.

By the angle monotonicity and completeness of $(X, d)$, the sequence
$\bra{pq^k}$ of geodesics converges to the geodesic $pq$.
The uniqueness of the direction $\arrow{q}{p} \in \Sigma_p$
follows from the construction.
\end{proof}

\begin{step}
For any $p \in (x, y)$ and any $q_1, q_2 \in xz \cup xw$,
$$\angle(\arrow{q_1}{p}, \arrow{q_2}{p})
= \cangle(p; q_1, q_2).$$
\end{step}

\begin{proof}
In the case where $q_1 \in xz$ and $q_2 \in xw$, the assertion follows from Step 2.
We may assume that $q_1 \in (x, q_2) \cap xz$. Set
$$\beta_0 := S(\cangle(p; x, w), \cangle(p; x, q_2))
\text{ and }
\beta_1 := S(\cangle(p; q_1, w), \cangle(p; q_1, q_2)).$$

In this case, it follows from the same argument as in the previous step that
$$\zengle(\beta_0 \cdot \arrow{x}{p}; \beta_1 \cdot \arrow{q_1}{p}, \arrow{q_2}{p}) = 0,$$
that is,
\begin{align*}
\angle(\arrow{q_1}{p}, \arrow{q_2}{p})
&= \angle(\arrow{q_2}{p}, \arrow{x}{p}) - \angle(\arrow{q_1}{p}, \arrow{x}{p})\\
&= \cangle(p; q_2, x) - \cangle(p; q_1, x)
= \cangle(p; q_1, q_2).
\end{align*}
This is the desired equation.
\end{proof}

Finally, we construct the isometric embedding map $F_y : D_y \to M^2_\k$ by extending
$F$ in Step 1. For any $p \in (x, y)$, we let $D_p \subset X$ be the set of points on a geodesic
from $p$ to some $q \in xz \cup xw$. We also define the map $F_p : D_p \to M^2_\k$ naturally.
Due to the assertion in Step 4, the map $F_p$ must be distance preserving.
Due to the uniqueness part of Step 3, it is easy to see that
$$D_{p_2} \subset D_{p_1}
\qquad\text{ and }\qquad
F_{p_1} \equiv F_{p_2} \text{ on } D_{p_2}$$
for any $p_1, p_2 \in (x, y)$ with $p_1 \in (y, p_2)$. Noting that $D_y$ is the closure of
$\bigcup \bra{D_p \,|\, p \in xy \setminus\{x, y\}}$,
we define the map $F_y : D_y \to M^2_\k$ by extending each
$F_p: D_p \to M^2_\k$. This finishes the proof of Lemma~\ref{lem:tech}.
\end{proof}

We leave the proof of Corollaries~\ref{cor:D} and \ref{cor:E} to the reader (cf.~\cite[Appendix]{GM}).

\section{Proof of Theorem~\ref{thm:B}}\label{sect:7}
In this section, we describe the strategy of the proofs of Theorems~\ref{thm:A} and \ref{thm:B},
and prove Theorem~\ref{thm:B}.
In Theorems~\ref{thm:A} and \ref{thm:B}, we want to show that $\conv [p, Y]$ is isometrically embedded
into the model space $M_\k$ for some subset $Y$ of the open ball $B(p, \pi/\sqrt\k) \subset X$,
and the proof which we now give consists of three main steps:
\begin{quotation}
\begin{itemize}
\item[\textbf{Step 1}]
Show that there exist a unique direction $\arrow{x}{p}$ and its antipode $-\arrow{x}{p}$ 
in $\Sigma_p$, and $\angle(\arrow{x}{p}, \arrow{y}{p})
= \cangle(p; x, y)$ for each $x, y \in Y \setminus\{p\}$.
\item[\textbf{Step 2}]
Construct an isometric embedding map of $[p, Y]$ into $M_\k$.
\item[\textbf{Step 3}]
Extend the map in Step 2 to the isometric embedding of $\conv [p, Y]$ into $M_\k$.
\end{itemize}
\end{quotation}

\begin{proof}[Proof of Theorem~\ref{thm:B}]
Recall that we have sequences $\bra{x_i}_{i\in I}$ in $X$ and $\bra{\lambda_i}_{i\in I}$ in
$\R_+$ such that the equality holds in~\eqref{ineq:LSS}. Changing $\lambda_i$'s, we may assume that
\begin{equation}
\sum_{i, j\in I} \lambda_i\lambda_j \cos \cangle(p; x_i, x_j) = 0.
\end{equation}
We know that $\bra{x_i \,|\, i\in I}$ lies in the ball $B(p, \pi/\sqrt\k)$ when $\k > 0$.
We realize that Lemmas~\ref{lem:connect} and \ref{lem:embed} establish the first two steps and only the
final step remains to be done.

Take a small $\delta > 0$. We fix $x_1 := x_{i_1}$ and $x_2 := x_{i_2}$ for some $i_1, i_2 \in I$ and let
$x_{i-\delta} := \gamma_i(l_i - \delta)$ for $i = 1, 2$,
where $\gamma_i : [0, l_i] \to X$ is the geodesic from $p$ to $x_i$.
We may assume that $0 < \cangle(p; x_1, x_2) < \pi$.

Take any sequence
$\bra{x^k} \subset J_p$ such that
$\bra{x^k} \in (x_{1-\delta}, x_{2-\delta})$ and find the
corresponding point $\tx = \tx_{12-\delta}$ of $M_\k$. We observed that
$d(p, \bra{x^k}) = d(\tp, \tx)$ in
Sublemma~\ref{sublem}.

Find $\lambda > 0$ and $u \in (0, 1)$ such that
$$\lambda\arrow{\tx}{\tp} = (1 - u)\arrow{\tx_1}{\tp} + u\arrow{\tx_2}{\tp} \in C_\tp.$$
Recall that the tangent cone of the model space $M_\k$ is a Hilbert space. We prepare
the notations. Assuming $\lambda_{i_1} \le \lambda_{i_2}$, we let $J := I \cup \{i_{12}\}$ and
\begin{align*}
\lambda^\prime_{i_1} &:= \lambda_{i_1}u;&
\lambda^\prime_{i_2} &:= \lambda_{i_2} - \lambda_{i_1}u;\\
\lambda^\prime_{i_{12}} &:= \lambda\lambda_{i_1};&
 \lambda^\prime_i &:= \lambda_i \text{ for } i \in I \setminus\{i_1, i_2\}.
\end{align*}
Then, with $\tx_{i_12}$ being $\tx = \tx_{12-\delta}$,
$$0 =
\sum_{i \in I} \lambda_i\arrow{\tx_i}{\tp} =
\sum_{i \in J} \lambda^\prime_i\arrow{\tx_i}{\tp} \in C_\tp$$
and
$$0 =
\sum_{i, j\in J} \lambda^\prime_i \lambda^\prime_j \ip{\arrow{\tx_i}{\tp}}{\arrow{\tx_j}{\tp}}
\ge \sum_{i, j\in J} \lambda^\prime_i \lambda^\prime_j \ip{\arrow{x_i}{p}}{\arrow{x_j}{p}}
\ge 0.$$
Here we understand these inequalities by setting $\arrow{\tx_i}{\tp} := \arrow{\tx}{\tp}$
and $\arrow{x_i}{p} := \barrow{x^k}{p}$
for $i = i_{12}$. Then the first inequality follows from the triangle comparison, and
Proposition~\ref{prop:LS} implies the last inequality.

Hence we get
$$\angle\(\barrow{x^k}{p}, \arrow{x_i}{p}\)
= \cangle(\tp; \tx, \tx_i) \text{ for each } i \in I.$$
Since the space of directions $(\Sigma_p, \angle)$ has curvature $\ge 1$ (Proposition~\ref{prop:Sigma}),
\begin{align*}
&\onegle\(\arrow{x_1}{p}; \barrow{x^k}{p}, \barrow{x^l}{p}\)\\
&\le 2\pi - \onegle\(\arrow{x_1}{p}; \barrow{x^k}{p}, -\arrow{x_2}{p}\)
- \onegle\(\arrow{x_1}{p}; \barrow{x^l}{p}, -\arrow{x_2}{p}\)\\
&= 0.
\end{align*}

Therefore,$\barrow{x^k}{p}$ and $\bra{x^k}$
are Cauchy sequences, and hence we can find the
unique point $x_{12-\delta} \in (x_{1-\delta}, x_{2-\delta})$ where $\bra{x^k}$
converges to.

Since $\delta > 0$ is arbitrary, there exists a point $x_{i_{12}} = x_{12} \in (x_1, x_2)$ which satisfies
$$\sum_{i, j\in J} \lambda^\prime_i\lambda^\prime_j \cos \cangle(p; x_i, x_j) = 0.$$
This process can be repeated inductively.

Now we define $H^\prime_0 := [p, \bra{x_i \,|\, i \in I}]$ and $H^\prime_n := H^\prime_1 (H^\prime_{n-1})$ for $n \ge 2$, where $H^\prime_1(E)$,
for any subset $E \subset X$, is the set of the point $x$ of $X$ with $x \in E$ or $x = \lim_{k\to\infty} x^k$
for some sequence $\bra{x^k} \in (x^\prime_1, x^\prime_2)$ with $x^\prime_i \in (p, x_i)$ and $x_i \in E$, $i = 1, 2$. Then
$H^\prime_n \subset H_n \subset \overline{H^\prime_n}$.
Moreover, we observed that $H^\prime_n$ is isometrically embedded into
the model space $M_\k$, and the embedding of $H^\prime_n$ is the extension of that of $H^\prime_{n-1}$
for any $n\ge 1$.
Now the proof of Theorem~\ref{thm:B} is complete.
\end{proof}

\section{Proof of Theorem~\ref{thm:A}}\label{sect:8}
Now we are in a position to present the proof of Theorem~\ref{thm:A}. The structure of
the proof is identical to that of Theorem~\ref{thm:B} in the previous section.

\begin{proof}[Proof of Theorem~\ref{thm:A}]
Recall that we have a point $p$ of $X$ and a probability measure
$\mu$ in $P_1(X)$ satisfying
\begin{equation}\label{assume:A}
\int_X \int_X \arrowprod{px}{py} d\mu(x)d\mu(y) = 0.
\end{equation}

The proof is divided into three steps as is that of Theorem~\ref{thm:B}. Since now we
are handling a general probability measure $\mu$, we have to modify the argument in
the proof of Theorem~\ref{thm:B} in establishing the first and the final steps. This is done
in the following two lemmas.

\begin{lem}
Under the assumption of Theorem~\ref{thm:A},
for any point $x$ in $Y := \supp \mu \cap B(p, \pi/\sqrt\k)$ other than $p$,
there exists unique direction $\arrow{x}{p}$
and its antipode $-\arrow{x}{p}$
in $\Sigma_p$, and
$\cangle(p; x, y) = \angle(\arrow{x}{p}, \arrow{y}{p})$
for any $x$ and $y$ in $Y \setminus\{p\}$.
\end{lem}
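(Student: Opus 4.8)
The plan is to adapt the proof of Lemma~\ref{lem:connect} from finitely supported $\mu$ to an arbitrary $\mu\in P_1(X)$. Finiteness of the support was used in exactly two ways: in producing, for each point of the support, a unique geodesic from $p$ and an antipode in $\Sigma_p$, and in deducing the angle identity; so the proof splits accordingly. First observe that $Y\subset B(p,\pi/\sqrt\k)$; when $\k>0$, the convention $\arrowprod{px}{py}=+\infty$ on $\partial B(p,\pi/\sqrt\k)$ together with~\eqref{assume:A} confines the relevant mass of $\mu$ to the open ball, and all estimates below are carried out on a ball strictly inside $B(p,\pi/\sqrt\k)$.

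\emph{Part (a): unique geodesics and antipodes.} Fix $x_0\in Y\setminus\{p\}$. Since $x_0\in\supp\mu$, the mass $m_\delta:=\mu(B(x_0,\delta))$ is positive for every $\delta>0$, and it will play the role of the weight $\lambda_0$ of the point $x_0$ in Lemma~\ref{lem:connect}. Using separability of $\supp\mu$, for every $\epsilon,\delta>0$ I approximate $\mu$ by a finitely supported probability measure $\mu_\epsilon=\sum_{i\in I}\lambda_i\delta_{x_i}$, $\{x_i\}\subset\supp\mu$, with
$$0\le\sum_{i,j\in I}\lambda_i\lambda_j\arrowprod{px_i}{px_j}<\epsilon$$
(the lower bound is Proposition~\ref{prop:St}; the upper bound follows from~\eqref{assume:A} by continuity of $\arrowprod{px}{py}$ and a suitable mode of convergence $\mu_\epsilon\to\mu$), arranged so that the atoms $x_i$ lying in $B(x_0,\delta)$ carry total mass $\ge\frac12 m_\delta$. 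Then I run the argument of Lemma~\ref{lem:connect} on $\mu_\epsilon$: after the substitution $\lambda_i\mapsto\lambda_i d(p,x_i)$ and after replacing the $x_i$ by nearby points of $J_p$ (at an arbitrarily small extra cost), use Sublemma~\ref{sublem:subad} repeatedly to add up into one direction $\check\xi$ all directions except those coming from atoms inside $B(x_0,\delta)$; the latter form a cluster that is within angle $O(\delta)$ of itself as seen from $p$, so it too behaves like a single direction of positive weight, and~\eqref{assume:A} then forces $\check\xi$ to be nearly antipodal to it. Quantitatively, for each $\epsilon_0>0$, taking $\epsilon$ and $\delta$ small enough relative to $m_\delta\epsilon_0$, one obtains a point $\cx\in J_p$ with $\angle(\arrow{y}{p},\arrow{\cx}{p})\ge\pi-\epsilon_0$, hence $\angle(\arrow{y}{p},\arrow{y'}{p})\le2\epsilon_0$, for all $y,y'\in J_p$ sufficiently close to $x_0$. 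Letting $\epsilon_0\to0$ shows the net $y\mapsto\arrow{y}{p}$ ($y\in J_p$, $y\to x_0$) is Cauchy in $(\Sigma_p,\angle)$; this produces a unique geodesic $px_0$ and a unique direction $\arrow{x_0}{p}$, and the associated points $\cx$ satisfy $\angle(\arrow{\cx}{p},\arrow{x_0}{p})\to\pi$, so by uniqueness of antipodes they converge to a genuine antipode $-\arrow{x_0}{p}\in\Sigma_p$. In particular $Y\subset J_p$, and $x\mapsto\arrow{x}{p}$ is continuous on $Y$ (two sequences in $Y$ with the same limit feed into the same Cauchy net above).

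\emph{Part (b): the angle identity.} Granting part (a) for every $x\in Y\setminus\{p\}$, apply Lemma~\ref{lem:embed} to $Z:=\{\arrow{x}{p}:x\in Y\setminus\{p\}\}\subset\Sigma_p$ to obtain a Hilbert space $\H$ and an isometric embedding of $Z$ into its unit sphere with the angle metric; composing with $\xi\mapsto d(p,x)\,\xi$ gives $x\mapsto\log_p x\in\H$ with $\int_Y|\log_p x|\,d\mu(x)=\int_Y d(p,x)\,d\mu(x)<\infty$. Since $\cos\cangle(p; x, y)\ge\cos\angle(\arrow{x}{p},\arrow{y}{p})$ by the angle monotonicity,
$$0=\int_Y\!\int_Y\arrowprod{px}{py}\,d\mu(x)\,d\mu(y)\ \ge\ \int_Y\!\int_Y\ip{\log_p x}{\log_p y}\,d\mu(x)\,d\mu(y)\ =\ \Bigl\|\int_Y\log_p x\,d\mu(x)\Bigr\|_\H^2\ \ge\ 0,$$
the middle identity being the Bochner computation of a squared norm. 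Hence the nonnegative integrand $d(p,x)d(p,y)\bigl(\cos\cangle(p; x, y)-\cos\angle(\arrow{x}{p},\arrow{y}{p})\bigr)$ vanishes $\mu\times\mu$-almost everywhere, i.e.\ $\cangle(p; x, y)=\angle(\arrow{x}{p},\arrow{y}{p})$ for $\mu\times\mu$-a.e.\ $(x,y)$; since such pairs are dense in $(\supp\mu)^2$ and $x\mapsto\arrow{x}{p}$ is continuous on $Y$ by part (a), the identity holds for every $(x,y)\in(Y\setminus\{p\})^2$.

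The main obstacle is the error bookkeeping in part (a): one must check that the discretization error, the error in passing to $J_p$, and the error in Sublemma~\ref{sublem:subad} can all be kept small compared with the \emph{a priori} unknown positive weight $m_\delta$ (and its companion weight $\check\lambda_0$ arising when the remaining directions are summed), so that Lemma~\ref{lem:connect}'s dichotomy survives; the non-atomic case, where $m_\delta\to0$ as $\delta\to0$, is the delicate one. A secondary technical point is the reduction to a ball strictly inside $B(p,\pi/\sqrt\k)$ when $\k>0$.
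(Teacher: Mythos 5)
Your part (a) follows the paper's strategy in outline (discretize $\mu$ by finitely supported measures on $J_p$, split at $B:=B(x_0,\delta)$, combine directions with Sublemma~\ref{sublem:subad}, then use curvature $\ge 1$ of $\Sigma_p$ via Proposition~\ref{prop:Sigma} to get a Cauchy sequence of directions), but it has a genuine gap exactly at the step where the paper does its real work. You assert that the atoms in $B(x_0,\delta)$ ``form a cluster that is within angle $O(\delta)$ of itself as seen from $p$, so it too behaves like a single direction of positive weight,'' and then jump from near-antipodality of $\cxi$ to this cluster to $\angle(\arrow{y}{p},\arrow{\cx}{p})\ge\pi-\epsilon_0$ for each individual $y\in J_p$ near $x_0$. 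Metric closeness of the cluster points only controls the \emph{comparison} angles at $p$; the actual angles between their directions in $\Sigma_p$ are only bounded below by these (indeed, the convergence of directions of nearby points is precisely what the lemma must prove, and fails without the equality hypothesis). Consequently the Sublemma-combined cluster vector may a priori have small norm through cancellation and has no proven relation to any individual $\arrow{y}{p}$, so antipodality does not transfer. The paper bridges this by two ingredients you omit: first the Claim that $\phi(z):=\int_X\arrowprod{pz}{px}\,d\mu(x)\ge 0$ for every $z\in X$, with equality on $\supp\mu$ (apply Proposition~\ref{prop:St} to $\frac{1}{1+\epsilon}(\mu+\epsilon\delta_z)$ and let $\epsilon\to 0$); second, the nearby points $x^k,x^l\in J_p$ are fed into Sublemma~\ref{sublem:subad} as \emph{test} directions $\zeta\in Z$, not merged into the cluster, which produces the sandwich estimates~\eqref{ineq:est} relating $\ip{\log_p z}{\bra{\cxia}}$ to $\frac{1}{\mu(B)}\int_{B^c}\arrowprod{pz}{px}\,d\mu$ with slack $\phi(z)/\mu(B)$, together with $\norm{\bra{\cxia}}^2=\frac{1}{\mu(B)^2}\int_B\int_B\arrowprod{px}{py}\,d\mu\,d\mu$. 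Only then does $\cos\angle(\arrow{z}{p},\bra{\cxia})\to -1$ follow, using $\phi(z)\to\phi(x_0)=0$ as $z\to x_0$ with $\delta$ fixed. Note also that your worry about $m_\delta\to 0$ is not the true obstruction: the limits are taken in the order $\alpha\to\infty$, then $k,l\to\infty$, then $\delta\to 0+$, so $\mu(B)$ is a fixed positive number at every stage; what is missing is $\phi$ and the test-direction mechanism.

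Your part (b) is a genuinely different route from the paper's: the paper derives $\cangle(p;x_0,y_0)=\angle(\arrow{x_0}{p},\arrow{y_0}{p})$ from the same estimates, letting $\delta\to 0+$ and using $\phi(y_0)=0$, whereas you embed the directions via Lemma~\ref{lem:embed}, form the Bochner integral of $\log_p x$, and deduce a.e. equality of $\cos\cangle$ and $\cos\angle$, extending by density and continuity. Granting part (a) (including the continuity of $x\mapsto\arrow{x}{p}$ on $Y$, which your uniform Cauchy statement would give), this is correct and arguably cleaner, with two small repairs: Lemma~\ref{lem:embed} requires antipodes to lie \emph{in} $Z$, so take $Z:=\bra{\pm\arrow{x}{p} \mid x\in Y\setminus\{p\}}$; and measurability/separable range of $x\mapsto\log_p x$ should be noted before invoking Fubini and the squared-norm identity. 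But as it stands the lemma is not proved, because part (b) rests on the incomplete part (a).
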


\begin{proof}
At first, we verify
\begin{clm}
For any point $z$ of $X$, we have
\begin{equation}\label{ineq:ge0}
\phi(z) := \int_X \arrowprod{pz}{px} d\mu(x) \ge 0.
\end{equation}
Moreover, we have the equality in~\eqref{ineq:ge0} provided $z$ is in $Y$.
\end{clm}

Indeed, we apply Proposition~\ref{prop:St} to the probability measure $\frac{1}{1+\epsilon}(\mu + \epsilon\delta_z)$ for
some small $\epsilon > 0$ to obtain
$$\int_X\int_X \arrowprod{px}{py} d\mu(x)d\mu(y)
+ 2\epsilon \int_X \arrowprod{pz}{px} d\mu(x)
+ \epsilon^2 \arrowprod{pz}{pz}
\ge 0.$$
Letting $\epsilon \to 0+$ yields the desired inequality~\eqref{ineq:ge0}. The second assertion follows
from~\eqref{ineq:ge0} and the assumption~\eqref{assume:A}.

Fix two points $x_0$ and $y_0$ of $Y$, a positive $\delta > 0$ and a finite subset $Z$ of $J_p$.
Let $B := B(x_0, \delta)$ and $B^c := X \setminus B(x_0, \delta)$. We see that there exist vectors in $C_p$
approximating $\frac{1}{\mu(B)}\int_{B} \log_{p}x\,d\mu(x)$ and $\frac{1}{\mu(B)}\int_{B^c} \log_{p}x\,d\mu(x)$, respectively, in a sense to
be made clear below.

Recall that any probability measure $\mu$ in $P_1(X)$ is \textit{tight}, that is, for any $\epsilon > 0$,
there is a compact subset $K_\epsilon \subset X$ such that $\mu(K_\epsilon) > 1-\epsilon$, and that we can find a
sequence of probability measures $\bra{\mua}$ with finite supports in $J_p$, i.e., $\mua(F^\alpha) = 1$
for some finite subset $F^\alpha \subset J_p$, such that
\begin{align*}
\int_X\int_X \arrowprod{px}{py} d\mua(x)d\mua(y)
&\to
\int_X\int_X \arrowprod{px}{py} d\mu(x)d\mu(y) = 0;\\
\int_X \arrowprod{pz}{px} d\mua(x)
&\to
\int_X \arrowprod{pz}{px} d\mu(x) = \phi(z) \quad \text{ for any } z \in Z
\end{align*}
as $\alpha \to \infty$ (e.g. Dudley~\cite{Du}).

Fix a large $\alpha \gg 1$.
We use Sublemma~\ref{sublem:subad} to find vectors $\xia$, $\cxia$ and $\etaa$ in $C_p$
approximating $\frac{1}{\mu(B)}\int_{B} \log_{p}x\,d\mua(x)$, $\frac{1}{\mu(B)}\int_{B^c} \log_{p}x\,d\mua(x)$
and $\xia + \cxia$, respectively, in
the sense that:
\begin{align*}
&\frac{1}{\mu(B)^2}
\int_X\int_X \arrowprod{px}{py} d\mua(x)d\mua(y)\\
&=
\frac{1}{\mu(B)^2} \[\int_{B}\int_{B} +2\int_{B}\int_{B^c}+\int_{B^c}\int_{B^c}\]
\arrowprod{px}{py} d\mua(x)d\mua(y)\\
&\ge
\frac{1}{\mu(B)^2} \[\int_{B}\int_{B}+2\int_{B}\int_{B^c}+\int_{B^c}\int_{B^c}\]
\ip{\log_{p}x}{\log_{p}y} d\mua(x)d\mua(y)\\
&\ge
\norm{\xia}^2 + 2 \ip{\xia}{\cxia} + \norm{\cxia}^2 - \alpha^{-1}\\
&= \norm{\etaa}^2 - \alpha^{-1},
\end{align*}
and for any $z \in Z$,
\begin{align*}
\frac{1}{\mu(B)} \int_X \arrowprod{pz}{px} d\mua(x)
&= \frac{1}{\mu(B)}\[\int_{B}+\int_{B^c}\] \arrowprod{pz}{px} d\mua(x)\\
&\ge \frac{1}{\mu(B)}\[\int_{B}+\int_{B^c}\] \ip{\log_{p}z}{\log_{p}x} d\mua(x)\\
&\ge \ip{\log_{p}z}{\xia} + \ip{\log_{p}z}{\cxia} - \alpha^{-1}\\
&\ge \ip{\log_{p}z}{\etaa} - 2\alpha^{-1}.
\end{align*}

Letting $\alpha \to \infty$, we know from these conditions that
$$\norm{\bra{\etaa}}^2 = \norm{\bra{\xia}}^2 + 2 \ip{\bra{\xia}}{\bra{\cxia}} + \norm{\bra{\cxia}}^2= 0;$$
\begin{align*}
\norm{\bra{\xia}}^2
&= \frac{1}{\mu(B)^2} \int_{B}\int_{B} \arrowprod{px}{py} d\mu(x)d\mu(y)\\
&= \frac{1}{\mu(B)^2} \int_{B^c}\int_{B^c} \arrowprod{px}{py} d\mu(x)d\mu(y) =
\norm{\bra{\cxia}}^2;
\end{align*}
and hence, for any $z \in Z$,
\begin{align}\label{ineq:est}
\ip{\log_{p}z}{\bra{\xia}} + \frac{\phi(z)}{\mu(B)}
&\ge
\frac{1}{\mu(B)} \int_{B} \arrowprod{pz}{px} d\mu(x)
&\ge
\ip{\log_{p}z}{\bra{\xia}};\\
\ip{\log_{p}z}{\bra{\cxia}} + \frac{\phi(z)}{\mu(B)}
&\ge
\frac{1}{\mu(B)} \int_{B^c} \arrowprod{pz}{px} d\mu(x)
&\ge
\ip{\log_{p}z}{\bra{\cxia}};\nonumber
\end{align}
and
$$\frac{\phi(z)}{\mu(B)} 
= \frac{1}{\mu(B)} \[\int_{B}+\int_{B^c}\] \arrowprod{pz}{px} d\mu(x)
\ge \ip{\log_{p}z}{\bra{\etaa}}
= 0.$$

Now we verify
\begin{clm}
For any sequence $\bra{x^k}$ in $J_p$ converging to $x_0$ as $k \to \infty$, the sequence
$\barrow{x^k}{p}$
is a Cauchy sequence in $(\Sigma_p, \angle)$.
\end{clm}

To see this, we find $\bra{\xia}$ and $\bra{\cxia}$
for a two point set $Z :=\bra{x^k, x^l}$
with
$k, l \gg \delta-1 \gg 1$. Then it follows that
\begin{align*}
\cos\angle(\arrow{z}{p}, \bra{\cxia})
&\le \frac{1}{\mu(B)} \frac{\int_{B^c} \arrowprod{pz}{px} d\mu(x)}{d(p, z)\norm{\bra{\cxia}}}\\
&=
\frac{1}{\mu(B)} \frac{\phi(z) - \int_{B} \arrowprod{pz}{px} d\mu(x)}{d(p, z) \norm{\bra{\cxia}}}
\to -1
\end{align*}
for $z \in \bra{x^k, x^l}$ as $k, l \to \infty$ and $\delta \to 0+$.

Since the space of directions $(\Sigma_p, \angle)$ has curvature $\ge 1$ (Proposition~\ref{prop:Sigma}),
\begin{align*}
\angle\(\arrow{x^k}{p}, \arrow{x^l}{p}\)
&\le 2\pi - \angle\(\arrow{x^k}{p}, \bra{\cxia}\)
+ \angle\(\arrow{x^l}{p}, \bra{\cxia}\)\\
&\to 2\pi - \pi - \pi = 0 \qquad \text{ as } k, l \to \infty \text{ and } \delta \to 0+.
\end{align*}
Then the Cauchy sequence $\barrow{x^k}{p}$
converges to the direction $\arrow{x_0}{p}$ in $\Sigma_p$. It also
follows that the antipode of $\arrow{x_0}{p}$ exists and is unique.

Finally, we have to verify
\begin{equation}\label{eq:cangle=angle}
\cangle(p; x_0, y_0) = \angle(\arrow{x_0}{p}, \arrow{y_0}{p}).
\end{equation}

To do this, find $\bra{\xia}$ and $\bra{\cxia}$
for $Z := \{x_0, y_0\}$. By letting $\delta$ tend to $0+$, we can
make $\bra{\xia}$ arbitrarily close to $\log_{p}x_0$.
Then, by using that $\phi(y_0) = 0$ and~\eqref{ineq:est}, we
obtain
$$\ip{\log_{p}y_0}{\bra{\xia}}
= \frac{1}{\mu(B)} \int_{B} \arrowprod{py_0}{px} d\mu(x) \to \arrowprod{py_0}{px_0}$$
as $\delta \to 0+$. This proves equation~\eqref{eq:cangle=angle}.
\end{proof}

\begin{lem}
Let $(X, d)$ be an Alexandrov spaces with curvature $\ge\k$. Suppose
that we have a point $p$ of $X$ and a subset $Y \subset X$ such that:
\begin{itemize}
\item
For any $x \in Y \setminus\{p\}$, there exist unique $\arrow{x}{p}$ and $-\arrow{x}{p}$
in $\Sigma_p$.
\item
For any $x, y \in Y \setminus\{p\}$, $\cangle(p; x, y) = \angle(\arrow{x}{p}, \arrow{y}{p})$.
\item
There is an isometric embedding map $F: [p, Y] \to M_\k$.
\end{itemize}
Then $F$ is extended to the isometric embedding of $\conv [p, Y]$ into $M_\k$.
\end{lem}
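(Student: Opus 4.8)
The plan is to follow the final step (\textbf{Step 3}) of the proof of Theorem~\ref{thm:B} almost verbatim; the only genuinely new point is that the key intermediate angle identity, which there came from the equality case of~\eqref{ineq:LSS} together with Proposition~\ref{prop:LS}, must now be extracted from the three hypotheses directly. As in Theorem~\ref{thm:B}, I would realize $\conv[p, Y]$ as the closure of an increasing union $\bigcup_{n\ge 0} H^\prime_n$, where $H^\prime_0 := [p, Y]$ and $H^\prime_n := H^\prime_1(H^\prime_{n-1})$, with $H^\prime_1(E)$ the set of limits $\lim_k x^k$ of sequences $\bra{x^k}\in(x^\prime_1, x^\prime_2)$ with $x^\prime_i \in (p, x_i)$ and $x_i \in E$; exactly as there, $H^\prime_n \subset H_n \subset \overline{H^\prime_n}$. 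I would then prove by induction on $n$ that $F$ extends to an isometric embedding $F_n : [p, H^\prime_n] \to M_\k$ such that every $x \in H^\prime_n \setminus\bra{p}$ admits a unique direction $\arrow{x}{p}$ with antipode $-\arrow{x}{p} \in \Sigma_p$, and $\cangle(p; x, y) = \angle(\arrow{x}{p}, \arrow{y}{p})$ for all $x, y \in H^\prime_n\setminus\bra{p}$. The base case $n=0$ is precisely the hypothesis.

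For the inductive step, fix $x_1, x_2 \in H^\prime_{n-1}$, which we may assume satisfy $0<\cangle(p;x_1,x_2)<\pi$; let $\gamma_i : [0, l_i]\to X$ be the (unique, by direction-uniqueness) geodesic from $p$ to $x_i$, put $x_{i-\delta} := \gamma_i(l_i - \delta)$ for small $\delta>0$, and choose $\bra{x^k}\subset J_p$ with $\bra{x^k}\in(x_{1-\delta}, x_{2-\delta})$, with corresponding point $\tx\in(\tx_{1-\delta},\tx_{2-\delta})$ of $M_\k$. Running the quadruple condition at $x_{1-\delta}$ and at $\bra{x^k}$ exactly as in Sublemma~\ref{sublem} shows that the six mutual distances of $p, x_{1-\delta}, x_{2-\delta}, \bra{x^k}$ agree with those of $\tp, \tx_{1-\delta}, \tx_{2-\delta}, \tx$; in particular $d(p,\bra{x^k})=d(\tp,\tx)$ and $\cangle(p; x_{i-\delta}, \bra{x^k}) = \cangle(\tp; \tx_i, \tx)$. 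Since $\tx$ lies on the model segment, $\cangle(\tp;\tx_1,\tx)+\cangle(\tp;\tx,\tx_2)=\cangle(\tp;\tx_1,\tx_2)$, which equals $\angle(\arrow{x_1}{p},\arrow{x_2}{p})$ by the inductive hypothesis (and isometry of $F_{n-1}$). Hence the chain
$$\cangle(\tp;\tx_1,\tx)+\cangle(\tp;\tx,\tx_2)=\angle(\arrow{x_1}{p},\arrow{x_2}{p})\le \angle(\arrow{x_1}{p},\barrow{x^k}{p})+\angle(\barrow{x^k}{p},\arrow{x_2}{p}),$$
combined with $\cangle\le\angle$, forces all of these to be equalities, giving $\angle(\barrow{x^k}{p},\arrow{x_i}{p})=\cangle(\tp;\tx,\tx_i)$.

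From here the argument is the one of Theorem~\ref{thm:B}: since $(\Sigma_p,\angle)$ has curvature $\ge1$ (Proposition~\ref{prop:Sigma}), the quadruple condition at $\arrow{x_1}{p}$ applied to $\barrow{x^k}{p}$, $\barrow{x^l}{p}$, $-\arrow{x_2}{p}$ — using that $\onegle(\arrow{x_1}{p};\barrow{x^k}{p},-\arrow{x_2}{p})=\pi$, which holds because on the round sphere $\Sigma_\tp(M_\k)$ the direction $\arrow{\tx}{\tp}$ lies on the minimizing arc from $\arrow{\tx_1}{\tp}$ to $\arrow{\tx_2}{\tp}$, hence $\arrow{\tx_1}{\tp}$ lies on the minimizing arc from $\arrow{\tx}{\tp}$ to $-\arrow{\tx_2}{\tp}$, and $\onegle$ depends only on the three mutual angles, which match — yields $\angle(\barrow{x^k}{p},\barrow{x^l}{p})\to 0$. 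Thus $\barrow{x^k}{p}$ and $\bra{x^k}$ are Cauchy, the geodesics $px^k$ converge, and one obtains $w_\delta\in(x_{1-\delta},x_{2-\delta})$ with unique $\arrow{w_\delta}{p}$ and $\angle(\arrow{w_\delta}{p},\arrow{x}{p})=\cangle(\tp;\tw_\delta,\tx)$ for $x\in H^\prime_{n-1}$; letting $\delta\to0$ as in Theorem~\ref{thm:B} gives the point $w\in(x_1,x_2)$ with the same properties. The antipode $-\arrow{w}{p}$ is produced by re-running the identical construction with $-\arrow{x_1}{p}$ and $-\arrow{x_2}{p}$ in place of $\arrow{x_1}{p}$ and $\arrow{x_2}{p}$ — these exist by the inductive hypothesis, one uses $\angle(-\arrow{x_1}{p},-\arrow{x_2}{p})=\angle(\arrow{x_1}{p},\arrow{x_2}{p})$, and density of $\Sigma^\prime_p$ to realize the relevant directions by geodesics in a limit. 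This closes the induction; passing to $n\to\infty$ and to closures yields the isometric embedding of $\conv[p,Y]$ into $M_\k$. I expect the main obstacle to be exactly this last point — propagating the \emph{existence} of antipodes to the newly constructed points — since, unlike in Theorem~\ref{thm:B}, there are no explicit positive weights $\lambda_i$ to lean on and one must instead build the antipode "on the other side."
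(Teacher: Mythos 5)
Your overall scaffolding (inductive construction of $H^\prime_n$, Cauchy argument in $(\Sigma_p,\angle)$ via Proposition~\ref{prop:Sigma}, passage $\delta\to 0$) follows the paper, but the pivotal step is not established. You claim that the chain $\cangle(\tp;\tx_1,\tx)+\cangle(\tp;\tx,\tx_2)=\angle(\arrow{x_1}{p},\arrow{x_2}{p})\le\angle(\arrow{x_1}{p},\barrow{x^k}{p})+\angle(\barrow{x^k}{p},\arrow{x_2}{p})$, ``combined with $\cangle\le\angle$, forces all of these to be equalities.'' It does not: writing $c_i:=\cangle(p;x_i,\bra{x^k})$ and $a_i:=\angle(\arrow{x_i}{p},\barrow{x^k}{p})$, you have only $c_1+c_2\le a_1+a_2$ and $c_i\le a_i$, two inequalities pointing the same way; for instance $a_i=c_i+0.1$ is consistent with both, so nothing is squeezed. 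What is missing is the reverse bound $a_i\le c_i$, and this is precisely the nontrivial rigidity input in the paper's proof: it invokes Lemma~\ref{lem:rigid} (whose hypothesis $\cangle(p;x_1,x_2)=\angle(\arrow{x_1}{p},\arrow{x_2}{p})$ is your second bullet) to compare at scale $\epsilon$, and then uses the \emph{first} bullet through the antipode identity $\angle(\xi,\eta)=\pi-\angle(-\xi,\eta)$: the two comparison inequalities $\angle(\pm\arrow{x_i}{p},\barrow{x^k}{p})\ge\angle(\pm\arrow{\tx_i}{\tp},\arrow{\tx}{\tp})$ have left- and right-hand sides each summing to $\pi$, hence are individually equalities. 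You use Sublemma~\ref{sublem} only for the distance identities and never use the antipode hypothesis at this stage, so the angle equality --- and with it the Cauchy argument and everything downstream --- is unproved in your version.

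The second weak point is the one you flag yourself: propagating antipodes to the newly constructed points by ``re-running the identical construction with $-\arrow{x_1}{p}$ and $-\arrow{x_2}{p}$'' cannot be carried out as described. The construction of $x_{12}$ requires actual points $x_{i-\delta}\in X$ and near-midpoint sequences $\bra{x^k}\subset J_p$, whereas $-\arrow{x_i}{p}$ is merely an element of the completion $\Sigma_p$; in general there are no points of $X$ lying in the antipodal directions, so there is nothing ``on the other side'' to run the construction on, and density of $\Sigma^\prime_p$ does not supply such points. The paper does not proceed this way: after obtaining $x_{12}$ it verifies $\cangle(p;x_{12},x_3)=\cangle(\tp;\tx_{12},\tx_3)$ directly (angle monotonicity for one inequality, triangle comparison for the other) and then iterates as in the proof of Theorem~\ref{thm:B}, rather than re-establishing your full bullet list, antipodes included, at each new point.
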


\begin{proof}
Take $x_i \in Y$ for $i = 1, 2, 3$ with $\arrow{x_1}{p} \neq \pm\arrow{x_2}{p}$
and fix a small positive number $\delta > 0$.
Let $\gamma_i : [0, l_i] \to X$ be the geodesic from $p$ to $x_i$
and $x_{i-\delta} := \gamma_i(l_i - \delta)$ for
$i = 1, 2$. Take any sequence $\bra{x^k} \subset J_p$ such that $\bra{x^k} \in (x_{1-\delta}, x_{2-\delta})$.

Let $\eta^k$ be the geodesic from $p$ to $x^k$. Then by Lemma~\ref{lem:rigid},
$$\cangle(p; \gamma_i(\epsilon), \bra{\eta^k(\epsilon)})
= \cangle(p; x_i, \bra{x^k})$$
for any $\epsilon > 0$ and $i = 1, 2$. The triangle comparison yields that
\begin{equation}
\angle\(\pm \arrow{x_i}{p}, \barrow{x^k}{p}\)
\ge \angle(\pm \arrow{\tx_i}{\tp}, \arrow{\tx}{\tp})
\end{equation}
for $i = 1, 2, 3$. Then this inequality must be equality. By the same
argument as in the proof of Theorem~\ref{thm:B}, we know that $\barrow{x^k}{p}$ 
and $\bra{x^k}$
are
Cauchy sequences in $\Sigma_p$ and in $X$, respectively. The sequence
$\bra{x^k}$
converges
to the unique point $x_{12-\delta} \in (x_{1-\delta}, x_{2-\delta})$. Letting $\delta \to 0+$, we can find the point
$x_{12} \in (x_1, x_2)$.

Finally, we verify that $\cangle(p; x_{12}, x_3) = \cangle(\tp; \tx_{12}, \tx_3)$. The angle monotonicity
implies that
$$\cangle(p; x_{12}, x_3)
\le \angle(\arrow{x_{12}}{p}, \arrow{x_3}{p})
= \cangle(\tp; \tx_{12}, \tx_3),$$
while the reverse inequality follows from the triangle comparison.

This leads to the isometry between $\conv [p, Y]$ and $\conv F([p, Y])$ as in the
proof of Theorem~\ref{thm:B}. This concludes the proof of the lemma.
\end{proof}
The above two lemmas complete the proof of Theorem~\ref{thm:A}.
\end{proof}

\section{Applications}\label{sect:9}
In this final section, we give a couple of applications of our main theorems.
At first, we consider the following inequality which can be found in Villani's
book~\cite[(8.45)]{Vi}. The proof is done by using the triangle comparison inequality
three times.
\begin{prop}
Let $(X, d)$ be an Alexandrov space of non-negative curvature.
Suppose that $\gamma, \eta : [0, 1] \to X$ be two constant speed geodesics. Then for any
$t \in (0, 1)$,
\begin{align}\label{ineq:Fig}
\begin{split}
d(\gamma(t), \eta(t))^2
\ge& (1 - t)^2 d(\gamma_0, \eta_0)^2 + t^2 d(\gamma_1, \eta_1)^2\\
&+ t(1 - t)
\Bigl[
d(\gamma_1, \eta_0)^2 + d(\gamma_0, \eta_1)^2 - d(\gamma_0, \gamma_1)^2 - d(\eta_0, \eta_1)^2 \Bigr],
\end{split}
\end{align}
where we abbreviated $\gamma(i), \eta(i)$ to $\gamma_i, \eta_i$ for $i = 0, 1$.
\end{prop}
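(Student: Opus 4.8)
The plan is to apply the triangle comparison inequality~\eqref{ineq:tri} of Theorem~\ref{thm:St} exactly three times, taking $\k=0$ so that the comparison triangles live in the Euclidean plane $M^2_0=\R^2$, and then to glue the three resulting estimates together by means of an elementary identity in $\R^2$.

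First I would record the following fact about $\R^2$ (valid in any inner product space): if $m=(1-t)a+tb$ is the point dividing the segment $[a,b]$ at parameter $t\in(0,1)$, then
$$|c-m|^2=(1-t)|c-a|^2+t|c-b|^2-t(1-t)|a-b|^2$$
for every point $c$; this follows at once from $c-m=(1-t)(c-a)+t(c-b)$ together with $2\ip{c-a}{c-b}=|c-a|^2+|c-b|^2-|a-b|^2$. Since $\eta$ is a constant speed geodesic, $\eta(t)$ divides a geodesic from $\eta_0$ to $\eta_1$ at parameter $t$, so applying~\eqref{ineq:tri} to the triple $\{\gamma(t),\eta_0,\eta_1\}$ and evaluating $d(\tx,\tw)^2$ by the identity above (here $\tx$ corresponds to $\gamma(t)$ and $\tw$ to $\eta(t)$) gives
$$d(\gamma(t),\eta(t))^2\ge(1-t)\,d(\gamma(t),\eta_0)^2+t\,d(\gamma(t),\eta_1)^2-t(1-t)\,d(\eta_0,\eta_1)^2.$$
Then, since $\gamma(t)$ divides a geodesic from $\gamma_0$ to $\gamma_1$ at parameter $t$, two further applications of~\eqref{ineq:tri}, to the triples $\{\eta_0,\gamma_0,\gamma_1\}$ and $\{\eta_1,\gamma_0,\gamma_1\}$, yield
$$d(\gamma(t),\eta_0)^2\ge(1-t)\,d(\gamma_0,\eta_0)^2+t\,d(\gamma_1,\eta_0)^2-t(1-t)\,d(\gamma_0,\gamma_1)^2$$
and the same inequality with $\eta_0$ replaced by $\eta_1$ throughout.

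Substituting these last two bounds into the first one and collecting terms finishes the proof: the coefficient of $d(\gamma_0,\gamma_1)^2$ becomes $-(1-t)\cdot t(1-t)-t\cdot t(1-t)=-t(1-t)$, and the remaining terms reassemble into exactly~\eqref{ineq:Fig}. The computation is pure bookkeeping, so I expect no genuine obstacle; the only points deserving a line of care are (i) checking the hypotheses of Theorem~\ref{thm:St}(1) --- trivial here, since $\k=0$ imposes no perimeter restriction and $\gamma(t)$, $\eta(t)$ genuinely lie in $(\gamma_0,\gamma_1)$, $(\eta_0,\eta_1)$ in the required parametrized sense --- and (ii) the degenerate configurations $\gamma_0=\gamma_1$ or $\eta_0=\eta_1$ (when one of the three triangles collapses), which are either checked directly or obtained by a short continuity argument, the reduced inequality then being just a single triangle comparison.
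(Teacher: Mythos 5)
Your argument is correct and is exactly the route the paper indicates: the paper proves this proposition (quoting Villani's inequality (8.45)) precisely ``by using the triangle comparison inequality three times,'' which is what you carry out, with the Euclidean identity for the point dividing a segment at parameter $t$ supplying the quantitative form of each comparison. The bookkeeping and the degenerate-case remarks are fine, so nothing needs to be added.
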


Now it is easy to prove the following.
\begin{prop}
Suppose that we have the equality in~\eqref{ineq:Fig} for some $t \in (0, 1)$
and in addition that there is a midpoint $p \in (\gamma(t), \eta(t))$. Then $\conv [p, \gamma \cup \eta]$ is
isometric to a closed convex set in the Euclidean space $\R^3$.
\end{prop}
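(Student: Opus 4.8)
The plan is to reduce the equality case of~\eqref{ineq:Fig} to an instance of Theorem~\ref{thm:A} (or Theorem~\ref{thm:B}), so that the rigidity conclusion follows immediately. First I would recall how inequality~\eqref{ineq:Fig} is proven: it comes from applying the triangle comparison three times to the point $p' := \gamma(t)$ (or $\eta(t)$, or a suitable vertex) against the four points $\gamma_0, \gamma_1, \eta_0, \eta_1$, together with the fact that $\gamma(t)$ lies on the geodesic $\gamma$ and $\eta(t)$ on $\eta$. Tracing through that argument, equality in~\eqref{ineq:Fig} forces equality in each of the three triangle-comparison steps that were used. In particular, writing things in terms of the inner product $\arrowprodnone{\cdot}{\cdot}$ of~\eqref{eq:BNprod} or, more conveniently, passing to the midpoint $p \in (\gamma(t), \eta(t))$, each of these equalities is an equality in~\eqref{ineq:moti}, hence (by the monotonicity of comparison angles and Alexandrov's lemma) an equality of comparison angle with honest angle at $p$.

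The key step is then to package these equalities as the vanishing of a single quadratic form $\sum_{i,j}\lambda_i\lambda_j\arrowprod{px_i}{px_j}$ with $p$ the given midpoint and $\{x_i\}$ an appropriate finite configuration drawn from $\{\gamma_0,\gamma_1,\eta_0,\eta_1,\gamma(t),\eta(t)\}$ with signs/weights chosen so that the corresponding vectors $\log_p x_i$ sum to zero in the model. Concretely, since $p$ is the midpoint of the segment from $\gamma(t)$ to $\eta(t)$, we have $\log_p\gamma(t) + \log_p\eta(t) = 0$ in $C_p$; and since $\gamma(t)$ lies between $\gamma_0$ and $\gamma_1$ on a geodesic (similarly $\eta(t)$), the equality cases just extracted say that, in $C_p$, $\log_p\gamma(t)$ is the appropriate convex combination of $\log_p\gamma_0$ and $\log_p\gamma_1$ lying on the segment between them, and likewise for $\eta(t)$. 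Choosing $\lambda$'s accordingly (with the weight on $\gamma(t)$ and $\eta(t)$ cancelling out, or kept, as in the proof of Theorem~\ref{thm:B}), one gets a genuine linear relation $\sum_i \lambda_i \log_p x_i = 0$ in $C_p$, which by Cauchy--Schwarz and the Lang--Schroeder--Sturm inequality~\eqref{ineq:LSS} is exactly the vanishing hypothesis of Theorem~\ref{thm:B}. Theorem~\ref{thm:B} (with $\k = 0$, so $M_0 = $ separable Hilbert space) then yields that $\conv[p, \{\gamma_0,\gamma_1,\eta_0,\eta_1\}]$ — which contains $\conv[p,\gamma\cup\eta]$ since $\gamma = [\gamma_0,\gamma_1]$ and $\eta = [\eta_0,\eta_1]$ are among the geodesics reconstructed in the convex-hull step — is isometric to a closed convex subset of a Hilbert space. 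Finally, since the relevant configuration $[p,\{\gamma_0,\gamma_1,\eta_0,\eta_1\}]$ sits inside the $4$-point span, its convex hull is at most $3$-dimensional (the four points $\gamma_0,\gamma_1,\eta_0,\eta_1$ together with $p$ span an affine subspace of dimension $\le 3$ in the model, as $p$ is determined as an intersection point of segments among them), so one lands in $\R^3$.

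The main obstacle I anticipate is the bookkeeping in the reduction step: making precise which triangle-comparison equalities are forced by equality in~\eqref{ineq:Fig}, and then choosing the weights $\lambda_i$ and the sign conventions so that the vectors $\log_p x_i$ genuinely satisfy a linear dependence (not merely that several pairwise comparison angles equal honest angles). This requires knowing that the antipodes $-\log_p\gamma(t)$ etc.\ exist and are realized by the relevant opposite directions — but that is guaranteed here because $p$ is an \emph{interior} midpoint of $(\gamma(t),\eta(t))$ and of the sub-geodesics of $\gamma,\eta$, so the needed antipodes are the tangent directions along those geodesics, exactly as in Lemma~\ref{lem:connect}. Once the linear relation in $C_p$ is in hand, the rest is a direct citation of Theorem~\ref{thm:B} and the dimension count; I would not expect any further difficulty there.
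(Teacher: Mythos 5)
Your proposal follows essentially the same route as the paper: the paper also takes the midpoint $p$, puts the weights $\lambda(\gamma_0)=\lambda(\eta_0)=1-t$ and $\lambda(\gamma_1)=\lambda(\eta_1)=t$ on the four endpoints, checks that equality in~\eqref{ineq:Fig} (via the resulting Euclidean quadruples $(\gamma(t);\eta_0,\eta(s),\eta_1)$ and $(\eta(t);\gamma_0,\gamma(s),\gamma_1)$) forces $\sum_{x,y}\lambda(x)\lambda(y)\arrowprodnone{px}{py}_0=d(p,\gamma(t))^2+d(p,\eta(t))^2+2\arrowprodnone{p\gamma(t)}{p\eta(t)}_0=0$, and then cites Theorem~\ref{thm:B}. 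The only difference is presentational: the paper verifies the vanishing by this direct distance computation with comparison quantities, whereas you phrase it as a ``linear relation $\sum_i\lambda_i\log_p x_i=0$ in $C_p$'' (which should be read as the vanishing of the quadratic form, since $C_p$ is not linear); the detour through angle rigidity and antipodes is not needed.
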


\begin{proof}
We are going to see that the proposition follows from Theorem~\ref{thm:B}. Since
we have the equality in~\eqref{ineq:Fig}, we know that the quadruples $(\gamma(t); \eta_0, \eta(s), \eta_1)$
and $(\eta(t); \gamma_0, \gamma(s), \gamma_1)$ are isometric to the ones in the Euclidean plane for any
$s \in (0, 1)$.

We let $\lambda(\gamma_0) = \lambda(\eta_0) := 1 - t $ and $\lambda(\gamma_1) = \lambda(\eta_1) := t$.
Then, with $Y := \bra{\gamma_i, \eta_i \,|\, i = 0, 1}$, we have
$$\sum_{x, y\in Y} \lambda(x)\lambda(y) \arrowprodnone{px}{py}_0
= d(p, \gamma(t))^2 + d(p, \eta(t))^2 + 
2\arrowprodnone{p\gamma(t)}{p\eta(t)}_0 = 0.$$
Now applying Theorem~\ref{thm:B} proves the proposition.
\end{proof}

Next we consider the packing radius of positively curved Alexandrov spaces.
\begin{defi}
Let $(X, d)$ be a metric space and $q \ge 2$. We define its \textit{$q$-th packing
radius} $\pack(X)$ by
\begin{equation}\label{eq:pack}
\pack(X) := \frac{1}{2} \sup \bra{\min_{1\le i<j\le q} d(x_i, x_j) \Bigm| (x_i) \in X^q }.
\end{equation}
The sequence $(x_i) \in X^q$ is called a \textit{$q$-th packer} when it attains the supremum
in~\eqref{eq:pack}.
\end{defi}

\begin{prop}
Let $(X, d)$ be an Alexandrov space with curvature $\ge 1$. Then
\begin{enumerate}
\item
the $q$-th packing radius $\pack(X)$ of $X$ does not exceed $\frac{1}{2}\arccos \frac{1}{1-q}$;
that of the round sphere $\S^n$ of constant curvature $1$ and dimension $n \ge q-2$.
\item
If $\pack(X) = \pack(\S^{q-2})$ and there exists a $q$-th packer, then $X$ is isometric
to the spherical join $\S^{q-2} * Y$ for some Alexandrov space $Y$ with
curvature $\ge 1$.
\end{enumerate}
\end{prop}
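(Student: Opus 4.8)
The plan is to pass to the Euclidean cone $C(X)$ over $X$, with apex $o$ and metric $|sx-s'y|^2 = s^2 + s'^2 - 2ss'\cos d(x,y)$, and to deduce part (1) from the Lang--Schroeder--Sturm inequality and part (2) from the rigidity Theorem~\ref{thm:B} applied inside $C(X)$. Two standard facts will be used at the outset: a complete length space with curvature $\ge 1$ has ${\rm diam} \le \pi$, so $C(X)$ is a genuine complete length space, and $C(X)$ then has curvature $\ge 0$ (see e.g.~\cite{BBI}); hence $C(X)$ is an Alexandrov space with curvature $\ge 0$ and all of the preceding theory applies to it.

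For part (1), given $x_1,\dots,x_q\in X$, lift them to $\tx_i := (x_i,1)\in C(X)$, which lie at distance $1$ from $o$. A direct computation with the cone metric gives $\cos\zengle(o;\tx_i,\tx_j) = \cos d(x_i,x_j)$, hence $\arrowprodnone{o\tx_i}{o\tx_j}_0 = \cos d(x_i,x_j)$. Applying the Lang--Schroeder--Sturm inequality (Theorem~\ref{thm:St}(3)) in $C(X)$ at $o$ with all weights equal to $1$ yields $q + \sum_{i\neq j}\cos d(x_i,x_j)\ge 0$. If $d(x_i,x_j)\ge 2r$ for all $i\neq j$, then since $2r$ and each $d(x_i,x_j)$ lie in $[0,\pi]$ we get $\cos d(x_i,x_j)\le\cos 2r$, so $q + q(q-1)\cos 2r\ge 0$, that is $\cos 2r\ge\tfrac1{1-q}$ and $2r\le\arccos\tfrac1{1-q}$; taking the supremum bounds $\pack(X)$. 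For $\S^n$ with $n\ge q-2$ this applies and is sharp, since $q$ unit vectors forming a regular $(q-1)$-simplex span an $\S^{q-2}\subseteq\S^n$ and have pairwise spherical distance exactly $\arccos\tfrac1{1-q}$.

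For part (2), let $(x_i)$ be a $q$-th packer and set $2r := 2\pack(X) = \arccos\tfrac1{1-q}$. Running the estimate again, $0\le\sum_{i,j}\cos d(x_i,x_j)\le q + q(q-1)\cos 2r = 0$, so every inequality is an equality; in particular $d(x_i,x_j) = 2r$ for all $i\neq j$ and $\sum_{i,j}\arrowprodnone{o\tx_i}{o\tx_j}_0 = 0$. This is exactly the equality case of the Lang--Schroeder--Sturm inequality in the curvature-$\ge 0$ space $C(X)$, so Theorem~\ref{thm:B} (with $p = o$, $\kappa = 0$) shows that $[o,\{\tx_i\}]$ embeds isometrically into the Euclidean model space $M_0$ and that $\conv[o,\{\tx_i\}]$ is isometric to that of the image. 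Since the images of the $\tx_i$ are then $q$ unit vectors with pairwise inner product $\cos 2r = \tfrac1{1-q}$, they are the vertices of a regular $(q-1)$-simplex with centroid $o$, and so $\conv[o,\{\tx_i\}]$ is isometric to a solid regular $(q-1)$-simplex $\Delta$ centred at $o$.

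Finally I would spread $\Delta$ over the cone. The homotheties $h_t\colon(x,s)\mapsto(x,ts)$ of $C(X)$ are dilations by the factor $t$, and $h_t$ carries $\conv[o,\{\tx_i\}]$ to $\conv[o,\{(x_i,t)\}]$; these sets are nested in $t$ and each is isometric to $t\Delta$, so their union is isometric to $\bigcup_{t>0}t\Delta$, which is the linear span of $\Delta$, a copy of $\R^{q-1}$ since $o$ is interior to $\Delta$. Thus $C(X)$ contains an isometric copy of $\R^{q-1}$ through $o$, in particular a line, and by the splitting theorem for non-negatively curved Alexandrov spaces in the present generality (Mitsuishi~\cite{Mi}), applied repeatedly, $C(X)$ splits isometrically as $\R^{q-1}\times Z$ with $o = (0,z_0)$ for a complete length space $Z$ of curvature $\ge 0$. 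Restricting the $h_t$ to $\{0\}\times Z$ exhibits $Z$ as the Euclidean cone over its link $W := \Sigma_{z_0}Z$, which has curvature $\ge 1$ by Proposition~\ref{prop:Sigma}; hence $C(X)\cong\R^{q-1}\times C(W) = C(\S^{q-2})\times C(W)\cong C(\S^{q-2}*W)$, and passing to unit spheres gives $X\cong\S^{q-2}*W$, so $Y := W$ works. I expect this last paragraph to be the crux: upgrading ``$C(X)$ contains an isometric $\R^{q-1}$'' to an honest isometric splitting requires the coherence of the homothety-translated embeddings and repeated careful use of Mitsuishi's splitting theorem in the possibly non-locally-compact setting, after which one must recognize the product of cones as the cone over a spherical join; the auxiliary facts ${\rm diam}(X)\le\pi$ and ``the Euclidean cone over a curvature-$\ge 1$ space has curvature $\ge 0$'' are classical but should be invoked with care.
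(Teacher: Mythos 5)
Your part (1) and the first half of part (2) are correct and essentially parallel to the paper's argument: the paper applies the Lang--Schroeder--Sturm inequality \eqref{ineq:LSS} and Theorem~\ref{thm:B} with $\kappa=1$ directly in $X$, whereas you route the same computation through the Euclidean cone $C(X)$; the two are equivalent, although the cone facts you invoke (${\rm diam}(X)\le\pi$ and curvature $\ge 0$ of $C(X)$ in the quadruple sense for a possibly non-geodesic $X$) do require the careful justification you promise. The real divergence is the endgame. The paper concludes that the $x_i$ lie in a subset of $X$ isometric to the round $\S^{q-2}$ (the embedded images are vertices of a regular spherical simplex contained in no closed hemisphere, so their convex hull is the whole great subsphere), hence ${\rm diam}(X)=\pi$, and then simply iterates the maximum diameter theorem quoted from \cite{Mi} (as in Grove--Wilhelm \cite{GW}) to get $X\cong\S^{q-2}*Y$. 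You instead build a flat $\R^{q-1}$ through the apex and appeal to Mitsuishi's splitting theorem; the coherence of the dilated simplices and the extraction of lines in the successive factors can indeed be made to work.

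The genuine gap is your sentence ``restricting the $h_t$ to $\{0\}\times Z$ exhibits $Z$ as the Euclidean cone over its link.'' The splitting $C(X)\cong\R^{q-1}\times Z$ is an abstract isometry; nothing guarantees that the homotheties $h_t$ preserve the slice $\{0\}\times Z$ (an isometry or homothety need not respect a given product decomposition, e.g.\ when $Z$ has further Euclidean factors), so the cone structure of $Z$ is not established, and even for a homothety-invariant $Z$ one must still prove that a space carrying such dilations is a cone over its space of directions. Moreover, setting $Y:=W=\Sigma_{z_0}Z$ and citing Proposition~\ref{prop:Sigma} only yields the quadruple condition: the paper itself emphasizes, via Halbeisen's example \cite{Ha}, that a space of directions in an infinite-dimensional Alexandrov space may fail to be a length space, while the conclusion requires $Y$ to be an Alexandrov space, i.e.\ a complete length space, of curvature $\ge 1$. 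Both defects are repairable --- for instance, identify $X\cong\Sigma_o C(X)$, prove the join formula $\Sigma_{(0,z_0)}(\R^{q-1}\times Z)\cong\S^{q-2}*\Sigma_{z_0}Z$ in this generality, and deduce that $W$ is a length space from the fact that $Z\cong C(W)$ is one --- but as written this final step is missing, and it is precisely the content that the paper outsources to the maximum diameter theorem of \cite{Mi}.
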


This proposition was established by Grove--Wilhelm~\cite{GW} for finite dimensional
Alexandrov spaces. Now that we have Theorem~\ref{thm:B}, we can prove this for possibly
infinite dimensional Alexandrov spaces. Part (1) follows from Lang--Schroeder--
Sturm's inequality~\eqref{ineq:LSS}. If we have a $q$-th packer $(x_i) \in X^q$ giving $\pack(X) =
\pack(\S^{q-2})$, inequality~\eqref{ineq:LSS} implies that $d(x_i, x_j) = \arccos \frac{1}{1-q}$ for any $i \ne j$,
and Theorem~\ref{thm:B} yields that $x_i$'s are contained in a subset isometric to the round
sphere $\S^{q-2}$. Then we only have to appeal to the maximum diameter theorem
(e.g.~\cite{Mi}).

\begin{rem}
In a Hilbert space $\H$, every probability measure $\mu \in P_1(\H)$ admits
its barycenter (or center of mass). If $X = \H$ and $\k = 0$, we have the equality
in the Lang--Schroeder--Sturm inequality~\eqref{ineq:LSS} if and only if the point p is the
barycenter of $\mu \in P_1(\H)$.

Recently, Ohta~\cite{Oh} investigated the properties of barycenters of probability
measures on Alexandrov spaces with lower curvature bound. The relation between
the barycenter of $\mu$ and the point $p$ which gives the equality in~\eqref{ineq:LSS} is yet
not clear to the author for general Alexandrov spaces.
\end{rem}

We close this paper by giving the following theorem. Since the proof is a simple
modification of that of Theorem~\ref{thm:A}, we leave it to the interested reader.

\begin{thm}
Let $(C_p, |\cdot|)$ be the tangent cone at a point p of an Alexandrov
space with lower curvature bound. Then
\begin{enumerate}
\item
for any Borel probability measure $\mu$ in $P_1(C_p)$, we have
\begin{equation}\label{ineq:intLS}
\int_{C_p}\int_{C_p} \ip{\xi}{\eta} d\mu(\xi)d\mu(\eta) \ge 0.
\end{equation}
\item
If we have the equality in~\eqref{ineq:intLS}, then the linear hull of the support of $\mu$ in
$C_p$ is isometric to a Hilbert space.
\end{enumerate}
\end{thm}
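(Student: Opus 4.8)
The plan is to run the three-step scheme of the proof of Theorem~\ref{thm:A} with the tangent cone $(C_p,|\cdot|)$ in place of $X$, its apex $o$ in place of the base point $p$, and $\k=0$; here the model space $M_0$ is itself a separable Hilbert space. The key simplifications are that the comparison angle at the apex is tautologically the angle in $\Sigma_p$, namely $\zengle(o;\xi,\eta)=\angle(\xi/\norm{\xi},\eta/\norm{\eta})$ by the definition of the cone metric, so the hypothesis $\int\int\ip{\xi}{\eta}\,d\mu\,d\mu=0$ is exactly the equality hypothesis of Theorem~\ref{thm:A} for $C_p$ at $o$; and that every geodesic issued from $o$ is the radial segment $t\mapsto t\,(\xi/\norm{\xi})$, which always exists and is unique, so $J_o=C_p$. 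The only structural difference from Theorem~\ref{thm:A} is that $C_p$ need not be a length space (Halbeisen's example), which is harmless: $C_p$ has curvature $\ge0$ by Proposition~\ref{prop:Sigma}(1), its space of directions at $o$ is $\Sigma_p$ with curvature $\ge1$ by Proposition~\ref{prop:Sigma}(2), its tangent cone at $o$ is $C_p$ again, and wherever the proof of Theorem~\ref{thm:A} would use geodesics that may not exist one invokes the ``sequence-as-a-point'' convention exactly as in the body of the paper.

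Part (1) is the integral version of Proposition~\ref{prop:LS}, proved just as Proposition~\ref{prop:St} was deduced from~\eqref{ineq:LSS}: since $\ip{\xi}{\eta}=\tfrac12(\norm{\xi}^2+\norm{\eta}^2-\norm{\xi-\eta}^2)$ is continuous on $C_p\times C_p$ with $|\ip{\xi}{\eta}|\le\norm{\xi}\norm{\eta}$ and $\mu\in P_1(C_p)$, the double integral converges absolutely; one approximates $\mu$ in the first moment by finitely supported Borel probability measures $\mu^\alpha$ (tightness together with the standard density result), applies Proposition~\ref{prop:LS} to the support of each $\mu^\alpha$ to get $\int\int\ip{\xi}{\eta}\,d\mu^\alpha\,d\mu^\alpha\ge0$, and passes to the limit.

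For Part (2), assume equality in~\eqref{ineq:intLS}. In Step~1, applying Part (1) to $\tfrac{1}{1+\epsilon}(\mu+\epsilon\delta_z)$ and letting $\epsilon\to0+$ gives $\phi(z):=\int_{C_p}\ip{z}{\eta}\,d\mu(\eta)\ge0$ for all $z\in C_p$; since $\phi$ is continuous and $\int\phi\,d\mu=0$, we get $\phi\equiv0$ on $Y:=\supp\mu$. The first lemma of Section~\ref{sect:8} then applies verbatim: splitting $\mu$ over $B(\xi_0,\delta)$ and its complement and using Sublemma~\ref{sublem:subad} at the apex of $C_p$ produces vectors $\xia,\cxia\in C_p$ approximating the conditional barycenters, and $\cos\angle(\xi_0/\norm{\xi_0},\cxia/\norm{\cxia})\to-1$ as $\delta\to0+$; by completeness of $\Sigma_p$ and curvature $\ge1$ this yields a unique antipode $-(\xi_0/\norm{\xi_0})\in\Sigma_p$. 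Put $Z:=\bra{\pm\,\xi/\norm{\xi}\mid\xi\in Y}\subset\Sigma_p$. In Step~2, Lemma~\ref{lem:embed} gives a Hilbert space $\H$ and an angle-preserving embedding of $Z$ into its unit sphere; coning off and writing $H_0:=\bra{t\,\zeta\mid\zeta\in Z,\ t\ge0}\subset C_p$, the cosine formula turns this into an isometric embedding $H_0\hookrightarrow\H$. In Step~3, the extension argument of the second lemma of Section~\ref{sect:8}, via Lemma~\ref{lem:rigid}, the triangle comparison in $C_p$, and curvature $\ge1$ of $\Sigma_p$, propagates the embedding to $H_n:=H_1(H_{n-1})$ for all $n$, hence to the linear hull $L:=\overline{\bigcup_{n\ge0}H_n}$; its image in $\H$ is closed, contains the line $\R\zeta$ through each embedded $\zeta$, and is closed under midpoints, hence a closed linear subspace of $\H$, so $L$ is isometric to a Hilbert space.

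The main obstacle I anticipate is not any single estimate, since the hard analysis is imported from Sections~\ref{sect:5} and~\ref{sect:8}; rather it is the bookkeeping needed to make the transfer to the non-length space $C_p$ rigorous: one must check that Sublemma~\ref{sublem:subad}, Lemma~\ref{lem:rigid}, and the two lemmas of Section~\ref{sect:8} use only the quadruple condition together with properties of the relevant spaces of directions and tangent cones (which for $C_p$ at $o$ are $\Sigma_p$ and $C_p$ itself), and that the ``linear hull'' defined through the $H_n$'s genuinely corresponds, after transport to $\H$, to the closed linear span of $\supp\mu$.
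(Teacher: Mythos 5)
Your proposal is correct and follows exactly the route the paper intends: the paper gives no written proof, stating only that the result is "a simple modification of that of Theorem~\ref{thm:A}," and your three-step transfer to $(C_p,|\cdot|)$ at the apex (using Proposition~\ref{prop:Sigma}, Sublemma~\ref{sublem:subad}, Lemmas~\ref{lem:embed} and~\ref{lem:rigid}, and the two lemmas of Section~\ref{sect:8}, with Part (1) obtained from Proposition~\ref{prop:LS} by finite-support approximation as in Proposition~\ref{prop:St}) is precisely that modification, with the non-length-space subtlety for $C_p$ correctly identified and handled via the sequence-as-a-point convention.
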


\subsection*{Acknowledgments}
The author would like to express his gratitude to Takao Yamaguchi
and Koichi Nagano for their comments and discussions. Special thanks
are due to Ayato Mitsuishi.


\begin{thebibliography}{19999}
\bibitem[BN]{BN}
I.-D. Berg and I.-G. Nikolaev,
Quasilinearization and curvature of Aleksandrov spaces.
Geom. Dedicata 133 (2008), 195--218.
\bibitem[BBI]{BBI}
D. Burago, Y. Burago and S. Ivanov,
A course in metric geometry. Graduate Studies
in Mathematics, 33. American Mathematical Society, Providence, RI, 2001.
\bibitem[BGP]{BGP}
Y. Burago, M. Gromov and G. Perelman,
A. D. Aleksandrov spaces with curvatures
bounded below. (Russian. Russian summary) Uspekhi Mat. Nauk 47 (1992), no. 2(284),
3--51, 222; translation in Russian Math. Surveys 47 (1992), no. 2, 1--58.
\bibitem[BL]{BL}
Y. Benyamini and J. Lindenstrauss,
Geometric nonlinear functional analysis. Vol. 1.
American Mathematical Society Colloquium Publications, 48. American Mathematical
Society, Providence, RI, 2000.
\bibitem[Du]{Du}
M. Dudley,
Real analysis and probability. Cambridge Studies in Advanced Mathematics,
74. Cambridge University Press, Cambridge, 2002.
\bibitem[GM]{GM}
K. Grove and S. Markvorsen,
New extremal problems for the Riemannian recognition
program via Alexandrov geometry. J. Amer. Math. Soc. 8 (1995), no. 1, 1--28.
\bibitem[GW]{GW}
K. Grove and F. Wilhelm,
Hard and soft packing radius theorems. Ann. of Math. (2)
142 (1995), no. 2, 213--237.
\bibitem[Ha]{Ha}
S. Halbeisen,
On tangent cones of Alexandrov spaces with curvature bounded below.
Manuscripta Math. 103 (2000), no. 2, 169--182.
\bibitem[LS]{LS}
U. Lang and V. Schroeder,
Kirszbraun's theorem and metric spaces of bounded curvature.
Geom. Funct. Anal. 7 (1997), no. 3, 535--560.
\bibitem[LV]{LV}
L. D. Loveland and J. Valentine,
Congruent embedding of metric quadruples on a unit sphere. 
J. Reine Angew. Math. 261 (1973), 205--209. 
\bibitem[Mi]{Mi}
A. Mitsuishi,
A splitting theorem for infinite dimensional Alexandrov spaces with nonnegative
curvature and its applications. Geom. Dedicata, to appear.
\bibitem[MT]{MT}
J. Morgan and G. Tian,
Ricci flow and the Poincar`e conjecture. Clay Mathematics
Monographs, 3. American Mathematical Society, Providence, RI; Clay Mathematics
Institute, Cambridge, MA, 2007.
\bibitem[Oh]{Oh}
S. Ohta,
Barycenters in Alexandrov spaces of curvature bounded below. preprint (2009).
\bibitem[OP]{OP}
S. Ohta and M. Pichot,
A note on Markov type constants. Arch. Math. (Basel) 92
(2009), no. 1, 80--88.
\bibitem[OS]{OS}
Y. Otsu and T. Shioya,
The Riemannian structure of Alexandrov spaces. J. Differential
Geom. 39 (1994), no. 3, 629--658.
\bibitem[PP]{PP}
G. Perelman and A. Petrunin,
Quasigeodesics and Gradient curves in Alexandrov spaces. unpublished preprint (1995)
\bibitem[Pe]{Pe}
A. Petrunin,
Semiconcave functions in Alexandrov's geometry. Surveys in differential
geometry. Vol. XI, 137--201, Surv. Differ. Geom., 11, Int. Press, Somerville, MA, 2007.
\bibitem[Pl]{Pl}
C. Plaut,
Spaces of Wald curvature bounded below. J. Geom. Anal. 6 (1996), no. 1,
113--134.
\bibitem[Pl2]{Pl:Surv}
C. Plaut,
Metric spaces of curvature $\ge k$. Handbook of geometric topology, 819--898,
North-Holland, Amsterdam, 2002.
\bibitem[Sa]{Sa}
T. Sato,
An alternative proof of Berg and Nikolaev's characterization of CAT(0)-spaces
via quadrilateral inequality. to appear in Arch. Math. (Basel).
\bibitem[Sh]{Sh}
K. Shiohama,
An introduction to the geometry of Alexandrov spaces. Lecture Notes
Series, 8. Seoul National University, Research Institute of Mathematics, Global Analysis
Research Center, Seoul, 1993.
\bibitem[St]{St}
K.-T. Sturm,
Metric spaces of lower bounded curvature. Exposition. Math. 17 (1999),
no. 1, 35--47.
\bibitem[St2]{St2}
K.-T. Sturm,
Monotone approximation of energy functionals for mappings into metric
spaces. II. Potential Anal. 11 (1999), no. 4, 359--386.
\bibitem[VW]{VW}
J. Valentine and S. Wayment,
Metric transforms and the hyperbolic four-point property. 
Proc. Amer. Math. Soc. 31 1972 232--234. 
\bibitem[Vi]{Vi}
C. Villani,
Optimal transport. Old and new. Grundlehren der Mathematischen Wissenschaften,
338. Springer-Verlag, Berlin, 2009.
\end{thebibliography}
\end{document}